\def\cl#1{\overline{#1}}
\def\dag-{$\kappa$-pseudo open }
\def\Int{\operatorname{Int}}
\newtheorem{theorem}{Theorem}[section]
\newtheorem{lemma}[theorem]{Lemma}
\newtheorem{corollary}[theorem]{Corollary}
\newtheorem{remark}[theorem]{Remark}
\newtheorem{proposition}[theorem]{Proposition}
\newtheorem{definition}[theorem]{Definition}
\newtheorem{example}[theorem]{Example}
\newtheorem{fact}[theorem]{Fact}
\newtheorem{problem}[theorem]{Problem}
\newproof{proof}{Proof}
\numberwithin{equation}{section}
\numberwithin{theorem}{section}
\newcommand{\w}{\omega}
\newcommand{\NN}{\mathbb{N}}
\newcommand{\IR}{\mathbb{R}}
\newcommand{\KK}{\mathcal{K}}
\newcommand{\AAA}{\mathcal{A}}
\newcommand{\pr}{\mathrm{pr}}
\newcommand{\Ra}{\Rightarrow}
\newcommand{\LRa}{\Leftrightarrow}
\newcommand{\CC}{C_k}
\newcommand{\SM}{{\setminus}}
\def\cl#1{\overline{#1}}
\begin{document}

\begin{frontmatter}

\title{New classes of compact-type spaces}

\author{Saak Gabriyelyan}
\address{Department of Mathematics, Ben-Gurion University of the Negev, Beer-Sheva, P.O. 653, Israel}

\author{Evgenii Reznichenko}
\address{Department of Mathematics, Lomonosov Mosow State University, Moscow, Russia}

\begin{abstract}
Being motivated by the notions of $\kappa$-Fr\'{e}chet--Urysohn spaces and $k'$-spaces introduced by Arhangel'skii, the notion of sequential spaces and the study of Ascoli spaces, we introduce three new classes of compact-type spaces. They are defined by the possibility to attain each or some of boundary points $x$ of an open set $U$ by a sequence in $U$ converging to $x$ or by a relatively compact subset $A\subseteq U$ such that $x\in \overline{A}$. Relationships of the introduced classes with the classical classes (as, for example, the classes of $\kappa$-Fr\'{e}chet--Urysohn spaces, (sequentially) Ascoli spaces, $k_\IR$-spaces, $s_\IR$-spaces etc.)  are given. We characterize these new classes of spaces and study them with respect to taking products,  subspaces and quotients. In particular, we give new characterizations of $\kappa$-Fr\'{e}chet--Urysohn spaces and show that each feathered topological group is $\kappa$-Fr\'{e}chet--Urysohn. We describe locally compact abelian groups which endowed with the Bohr topology  belong to one of the aforementioned classes. Numerous examples are given.
\end{abstract}

\begin{keyword}
$\kappa$-Fr\'{e}chet--Urysohn space \sep open-compact attainable  space  \sep weakly open-compact attainable  space \sep $\kappa$-sequential  space  \sep Ascoli space \sep $\kappa$-pseudo open map \sep weakly $\kappa$-pseudo open map

\MSC[2020] 22B05 \sep 54A05 \sep  54B05 \sep   54C10 \sep 54D70

\end{keyword}

\end{frontmatter}


\section{Introduction}


All topological spaces are assumed to be Tychonoff. We denote by $C_p(X)$ and $\CC(X)$ the space $C(X)$ of all continuous real-valued functions on a space $X$ endowed with the pointwise topology or the compact-open topology, respectively. Let us recall some of the most important classes of topological spaces which are widely studied in general topology and functional analysis.

\begin{definition} \label{def:compact-type} {\em A space $X$ is called
\begin{enumerate}
\item[$\bullet$] {\em Fr\'{e}chet--Urysohn} if for each non-closed subset $A\subseteq X$ and every point $a\in \overline{A}\SM A$, there is a sequence $\{a_n\}_{n\in\w}\subseteq A$ converging to $a$;
\item[$\bullet$] a {\em $k'$-space} if for each subset $A\subseteq X$ and every point $a\in \overline{A}$, there is a compact set $K\subseteq X$ such that $a\in \overline{K\cap A}$;
\item[$\bullet$] {\em sequential} if for each non-closed subset $A\subseteq X$ there is a sequence $\{a_n\}_{n\in\NN}\subseteq A$ converging to some point $a\in \bar A\setminus A$;
\item[$\bullet$] an {\em $s_\IR$-space} if every sequentially continuous function $f:X\to\IR$ is continuous ($f$ is {\em sequentially continuous} if  the restriction of $f$ onto each convergent sequence is continuous);
\item[$\bullet$] a {\em $k$-space} if for each non-closed subset $A\subseteq X$ there is a compact subset $K\subseteq X$ such that $A\cap K$ is not closed in $K$;
\item[$\bullet$] a {\em $k_\IR$-space} if every $k$-continuous function $f:X\to\IR$ is continuous ($f$ is {\em $k$-continuous} if the restriction of $f$ onto any compact subset $K\subseteq X$ is continuous);
\item[$\bullet$] {\em $\kappa$-Fr\'{e}chet--Urysohn} if for every open subset $U$ of $X$ and every $x\in \overline{U}$, there exists a sequence $\{x_n\}_{n\in\w} \subseteq U$ converging to $x$;
\item[$\bullet$] an {\em Ascoli space} if every compact subset of $\CC(X)$ is evenly continuous;
\item[$\bullet$] a {\em sequentially Ascoli space} if every convergent sequence in $\CC(X)$ is equicontinuous.
\end{enumerate} }
\end{definition}
The classes of Fr\'{e}chet--Urysohn spaces, sequential spaces,  $k$-spaces and $k_\IR$-spaces are classical, see for example the text of Engelking \cite{Eng} or  Michael's article \cite{Mi73}. $k'$-spaces were introduced by Arhangel'skii \cite{Arhangel63,Arhan63}. The Ascoli theorem states that if $X$ is a $k$-space, then every compact subset of $\CC(X)$ is evenly continuous, see Theorem 3.4.20 in \cite{Eng}. In \cite{Noble}, Noble proved that every $k_\IR$-space satisfies the conclusion of the Ascoli theorem. Being motivated by these results, the class of Ascoli spaces was separated and studied by Banakh and Gabriyelyan \cite{BG}. Sequentially Ascoli spaces were defined in \cite{Gabr:weak-bar-L(X)}. These properties were intensively studied for function spaces, free locally convex spaces, strict $(LF)$-spaces and their strong duals, Banach and Fr\'{e}chet spaces in the weak topology etc., see for example \cite{Arhangel,Gabr,Jar,mcoy,S-W} and references therein. $s_\IR$-spaces were introduced by Noble in \cite{Nob}  and thoroughly studied very recently by the authors in \cite{GR-kR-sR,GR-prod}.

The property of being a $\kappa$-Fr\'{e}chet--Urysohn space was introduced by Arhangel'skii and studied by Liu and Ludwig \cite{LiL}. The following interesting result was obtained by Sakai \cite{Sak2}.
\begin{theorem}[\cite{Sak2}] \label{t:Cp-kappa-FU}
The space $C_p(X)$ is $\kappa$-Fr\'{e}chet--Urysohn if, and only if, $X$ has the property $(\kappa)$.
\end{theorem}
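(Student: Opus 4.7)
I would prove the two implications separately, exploiting the standard description of basic open sets in $C_p(X)$, namely $W(f;x_1,\dots,x_k;\e):=\{g\in C(X):|g(x_i)-f(x_i)|<\e,\ i\le k\}$ for a finite set $\{x_1,\dots,x_k\}\subseteq X$ and $\e>0$, together with the fact that convergence in $C_p(X)$ is pointwise convergence on $X$.

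For the forward direction, start with a sequence $(A_n)_{n\in\w}$ of pairwise disjoint finite subsets of $X$ which is intended to witness the failure of property $(\kappa)$. Using Tychonoff-ness of $X$ and Urysohn's lemma, attach to each $A_n$ a bump function and form the open set
$$\Omega:=\bigcup_{n\in\w}\{g\in C(X):g(x)>\tfrac{1}{2}\text{ for every }x\in A_n\}\subseteq C_p(X).$$
One checks that $\bz\in\overline{\Omega}$: any basic neighborhood of $\bz$ involves only finitely many points of $X$, which are missed by almost every $A_n$, and on the remaining $A_n$ one can place a suitable rescaled bump. Applying the $\kappa$-Fr\'{e}chet--Urysohn assumption produces a sequence $(g_k)\subseteq\Omega$ with $g_k\to\bz$ pointwise; after passing to a subsequence we may assume each $g_k$ lies in the slice indexed by some $A_{n_k}$ with $(n_k)$ strictly increasing. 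The open sets $U_k:=\{x\in X:g_k(x)>\tfrac{1}{2}\}$ then expand the corresponding $A_{n_k}$'s, and the pointwise convergence $g_k(x)\to 0$ at every $x\in X$ translates into the point-finite / locally finite behavior of $\{U_k\}$ required by property $(\kappa)$.

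For the converse, let $U\subseteq C_p(X)$ be open and $f\in\overline{U}$; by translation we may assume $f=\bz$. For each finite $F\subseteq X$ and each $n\in\w$ choose $g_{F,n}\in U\cap W(\bz;F;1/n)$. The aim is to extract from this family a single sequence $(g_k)\subseteq U$ converging to $\bz$ at \emph{every} point of $X$, not merely on the countably many points appearing in the chosen $F$'s. Property $(\kappa)$ is the combinatorial device that permits this: applied to finite ``failure sets'' attached to the $g_{F,n}$'s (finitely many points where $|g_{F,n}|$ is not yet small), it yields a subsequence admitting a locally finite expansion by open sets, which in turn forces pointwise convergence at points of $X$ not in any $F$. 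The main obstacle lies in the forward direction, where the open set $\Omega$ and its defining bumps must be engineered carefully enough that any sequence in $\Omega$ convergent to $\bz$ necessarily carries the geometric information needed to reconstruct an expansion witnessing property $(\kappa)$ on $X$; in the converse direction, the delicate point is the precise choice of the auxiliary finite sets on $X$ to which property $(\kappa)$ is applied so that the resulting subsequence is truly pointwise null.
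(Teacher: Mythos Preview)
The paper does not contain a proof of this statement. Theorem~\ref{t:Cp-kappa-FU} is quoted as a result of Sakai~\cite{Sak2} (``The following interesting result was obtained by Sakai'') and is used only as background motivation in the introduction; no argument for it appears anywhere in the text. Consequently there is nothing in the paper to compare your proposal against.

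For what it is worth, your forward implication is essentially the standard argument and is in good shape: once a sequence $(g_k)\subseteq\Omega$ converges pointwise to~$\bz$, the indices $n_k$ for which $g_k>\tfrac12$ on $A_{n_k}$ must be unbounded (otherwise some fixed $A_m$ would carry $g_k>\tfrac12$ for infinitely many~$k$, contradicting $g_k\to\bz$ on the finite set $A_m$), so a strictly increasing subsequence exists, and then $U_k=\{g_k>\tfrac12\}$ is automatically a point-finite open expansion of $(A_{n_k})$. Your phrasing ``intended to witness the failure of property~$(\kappa)$'' is misleading, though: you are taking an \emph{arbitrary} pairwise disjoint sequence of finite sets and producing the required subsequence, i.e.\ proving $(\kappa)$ directly, not arguing by contradiction.

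Your converse direction, by contrast, is only a heuristic. The phrase ``applied to finite `failure sets' attached to the $g_{F,n}$'s'' does not yet specify a pairwise disjoint sequence of finite subsets of $X$ to which $(\kappa)$ can be applied, nor does it explain why a point-finite open expansion of that sequence forces $g_k(x)\to 0$ at points $x$ lying outside every chosen $F$. In Sakai's original argument this step requires a careful bookkeeping construction; as written, your sketch identifies the obstacle correctly but does not overcome it.
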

The $\kappa$-Fr\'{e}chet--Urysohn  spaces $\CC(X)$ were characterized also by Sakai in \cite{Sakai-3}. In \cite{GGKZ}, it is proved that if $C_p(X)$ is Ascoli, then it is $\kappa$-Fr\'{e}chet--Urysohn. The converse assertion is proved in \cite{Gabr-B1}. Moreover, in \cite{Gabr-seq-Ascoli} the last two assertions were generalized to sequentially Ascoli spaces, and therefore we have the following theorem.

\begin{theorem} \label{t:Cp-Ascoli}
The space $C_p(X)$ is Ascoli if, and only if, it is sequentially Ascoli if, and only if, it is $\kappa$-Fr\'{e}chet--Urysohn. 
\end{theorem}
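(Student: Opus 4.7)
The plan is to establish the cycle sequentially Ascoli $\Rightarrow$ $\kappa$-Fr\'{e}chet--Urysohn $\Rightarrow$ Ascoli $\Rightarrow$ sequentially Ascoli. The last implication is essentially free: a convergent sequence in $\CC(X)$ together with its limit is a compact subset, and for pointwise bounded compact subfamilies of $\CC(X)$ the even-continuity demanded by Ascoli is equivalent to the equicontinuity demanded by sequential Ascoli. All the real content therefore lies in the remaining two implications.

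For $\kappa$-Fr\'{e}chet--Urysohn $\Rightarrow$ Ascoli, I would route through Sakai's Theorem~\ref{t:Cp-kappa-FU} to translate the hypothesis on $C_p(X)$ into Sakai's property $(\kappa)$ of $X$, and then show directly that property $(\kappa)$ forces every compact $K \subseteq C_p(X)$ to be evenly continuous. The argument proceeds pointwise at each $x \in X$: the selection supplied by $(\kappa)$ yields, for any prescribed $\varepsilon > 0$, a neighborhood of $x$ on which each $f \in K$ oscillates by at most $\varepsilon$, from which even continuity at $x$ follows in a routine way. This implements the route of \cite{Gabr-B1}.

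The main step, and the principal obstacle, is sequentially Ascoli $\Rightarrow$ $\kappa$-Fr\'{e}chet--Urysohn, which generalizes the corresponding implication for Ascoli from \cite{GGKZ}. I would argue by contrapositive. If $C_p(X)$ is not $\kappa$-Fr\'{e}chet--Urysohn, there exist an open set $U \subseteq C_p(X)$ and a point $f_0 \in \overline{U}$ with no sequence in $U$ converging to $f_0$. Using the basic neighborhoods of $f_0$ in $C_p(X)$, indexed by finite subsets $F \subseteq X$ and $\varepsilon > 0$, I would pick witnesses $f_{F,\varepsilon} \in U$ that lie outside the corresponding basic neighborhood, and then diagonalize to produce a sequence that converges pointwise on all of $X$ while failing to be equicontinuous at one witness point $x \in X$. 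This contradicts the sequentially Ascoli property and closes the cycle. The delicate ingredient is the diagonal extraction: sequences in $C_p(X)$ converge pointwise, so the extracted sequence must simultaneously converge pointwise everywhere on $X$ and certify a failure of equicontinuity at a single prescribed point, a combinatorial balance driven by the exact failure of the $(\kappa)$-type selection in $X$ and carried out in \cite{Gabr-seq-Ascoli}.
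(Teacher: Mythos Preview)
Your high-level plan and the citations match the paper, which simply records the theorem as a consequence of \cite{GGKZ}, \cite{Gabr-B1}, and \cite{Gabr-seq-Ascoli} without further argument. However, your sketches of the underlying arguments contain a systematic confusion. Ascoli and sequentially Ascoli are properties of $C_p(X)$ \emph{as an abstract topological space $Y$}: they concern compact sets or convergent sequences in $C_k(Y)=C_k\big(C_p(X)\big)$ and their even- or equicontinuity as families of functions \emph{on $C_p(X)$}. Your sketches instead place the relevant $K$ and the witness sequence inside $C_p(X)$ itself and speak of oscillation and equicontinuity ``at a point $x\in X$'' --- the wrong level of the duality. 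A pointwise-convergent sequence in $C_p(X)$ that fails to be equicontinuous on $X$ says nothing directly about sequential Ascoli of $C_p(X)$; and the phrase ``witnesses $f_{F,\varepsilon}\in U$ that lie \emph{outside} the corresponding basic neighborhood'' has the sign reversed in any case, since $f_0\in\overline{U}$ forces such witnesses to lie \emph{inside} each basic neighborhood.

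For $\kappa$-FU $\Rightarrow$ Ascoli the paper's route (via \cite{Gabr-B1}) sidesteps all of this: it invokes the general fact, valid for \emph{any} space, that $\kappa$-Fr\'{e}chet--Urysohn implies Ascoli (recorded here as (v) of Proposition~\ref{p:wOCA-k-space}); no detour through property $(\kappa)$ is needed, so your claim that the $(\kappa)$-route ``implements \cite{Gabr-B1}'' is not accurate. For the hard direction, the argument in \cite{Gabr-seq-Ascoli} contraposes through Sakai's Theorem~\ref{t:Cp-kappa-FU}: failure of property $(\kappa)$ in $X$ is used to build a witness against sequential Ascoli of $C_p(X)$, but that witness must live in $C_k\big(C_p(X)\big)$ (or be phrased via the internal characterization of sequential Ascoli through strongly compact-finite families, cf.\ the use of Theorem~2.7 of \cite{Gabr-seq-Ascoli} in Proposition~\ref{p:zpwoca-dim-Ascoli}), not as a sequence of functions on $X$ as your diagonalization plan would produce.
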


The most important part in the proof of the converse assertion is the fact that any  $\kappa$-Fr\'{e}chet--Urysohn space is Ascoli, see Theorem 2.5 of \cite{Gabr-B1}. This fact immediately follows from a more general result:  A space $X$ is Ascoli if it satisfies the next condition: {\em $(\star)$ for every open subset $W$ of $X$ and every $z\in \overline{W}$ there exists $L\subseteq W$ such that $\overline{L}$ is compact and $z\in \overline{L}$.}
The condition $(\star)$ is a natural  analogue of the Arhangel'skii notion of $\kappa$-Fr\'{e}chet--Urysohn spaces in which a ``convergent sequence'' is replaced by a ``relatively compact subset''. In other words, $\kappa$-Fr\'{e}chet--Urysohness means that for each {\em open} set $W$, any  point $x\in \overline{W}\SM W$  can be {\em attained} by a {\em convergent sequence} lied in $W$, and the property $(\star)$ means $x$ can be {\em attained} by a {\em relatively compact} subset of $W$. This remark and the definition of sequential spaces motivate us to introduce the following three new classes of compact-type topological spaces.


\begin{definition} \label{def:oca} {\em
A space $X$ is called
\begin{enumerate}
\item[$\bullet$] {\em open-compact attainable}  if for every open subset $W$ of $X$ and every $z\in \overline{W}$ there exists a compact subset $K$ of $X$ such that $z\in K\cap \overline{K\cap W}$;
\item[$\bullet$] {\em weakly open-compact attainable}  if for every non-closed open subset $W$ of $X$, there are a point $z\in \overline{W}\SM W$ and a compact subset $K$ of $X$ such that $z\in K\cap \overline{K\cap W}$;
\item[$\bullet$] {\em $\kappa$-sequential}  if for every non-closed open subset $W$ of $X$, there are a point $z\in \overline{W}\SM W$ and a sequence $\{x_n\}_{n\in\w}\subseteq W$ converging to $z$.
\end{enumerate}  }
\end{definition}

The study of spaces introduced in Definition \ref{def:oca}  and their relationships with the properties defined in Definition \ref{def:compact-type} is the main purpose of the article. The general relationships between all aforementioned notions are described in the following diagram (where FU=Fr\'{e}chet--Urysohn)

\[
\xymatrix{
& \mbox{$k$-space} \ar@{=>}[r] & \mbox{$k_\IR$-space}  \ar@{=>}[rd]  \ar@{=>}[r] & \mbox{Ascoli} & \\
\mbox{sequential} \ar@{=>}[r]\ar@{=>}[ru] & \mbox{$s_\IR$-space} \ar@{=>}[ur] \ar@{=>}[r] &  \mbox{$\kappa$-sequential} \ar@{=>}[r] & {\mbox{weakly open-compact}\atop\mbox{attainable}}&\\
\mbox{FU}\ar@{=>}[r] \ar@{=>}[rd]\ar@{=>}[u] & \mbox{$\kappa$-FU}\ar@{=>}[r] \ar@{=>}[ru] & {\mbox{open-compact}\atop\mbox{attainable}}\ar@{=>}[r]\ar@{=>}[ru] & \mbox{Ascoli} \ar@{=>}[r] & {\mbox{sequentially}\atop\mbox{Ascoli}}\\
& \mbox{$k'$-space} \ar@{=>}[ru] & &&
}
\]

Now we describe the content of the article. First of all we show in Section \ref{sec:RE} that all notions introduced in Definition \ref{def:oca} are indeed new and none of the implications in the diagram is reversible. It turns out that each pass-connected space (in particular, every topological vector space) is $\kappa$-sequential, see Proposition \ref{p:pass-connected-k-sequential}. However, each strongly zero-dimensional space $X$ which is weakly open-compact attainable must be a sequentially Ascoli space, see Proposition \ref{p:zpwoca-dim-Ascoli}.
This result and Theorem \ref{t:Cp-Ascoli} show that there are locally convex spaces which are $\kappa$-sequential but not sequentially Ascoli. On the other hand, by Proposition \ref{p:seq-Ascoli-non-woca}, each $P$-space is sequentially Ascoli but not weakly open-compact attainable.
Moreover, in Example \ref{exa:non-woca-Ascoli} we construct an Ascoli pseudocompact space which is not weakly open-compact attainable, and in Example \ref{exa:woca-non-Ascoli} we show that there are $\kappa$-sequential pseudocompact spaces which are not (sequentially) Ascoli. In Proposition \ref{p:wOCA-k-space} we show that each $k_\IR$-space $X$ is weakly open-compact attainable, and every $s_\IR$-space $X$ is $\kappa$-sequential. Another distinguish example is given in Example \ref{exa:S2-lambda}, which generalizes the classical Arens space $S_2$.

In  Section \ref{sec:permanent} we study permanent properties of the introduced notions. In Example \ref{exa:square-woca} we show that there is a Fr\'{e}chet--Urysohn space whose square is not weakly open-compact attainable, and hence none of the notions from Definition \ref{def:oca} is preserved under taking finite products. The properties of being (weakly) open-compact attainable and $\kappa$-sequential are neither closed hereditary nor preserved under taking superspaces. In Theorem \ref{t:woca-hereditary} we prove that a hereditary weakly open-compact attainable space $X$ is Fr\'{e}chet--Urysohn.

Recall that a map $f:X\to Y$ between spaces $X$ and $Y$ is called {\em pseudo open} (resp., {\em $\kappa$-pseudo open}) if whenever $f^{-1}(y)\subseteq U$ where $y\in Y$ and $U$ is open in $X$, then $y\in \Int\big(f(U)\big)$ (resp., $y\in \Int\big(\overline{f(U)}\big)$), where $\Int(A)$ denotes the interior of a set $A$. The meaning of these classes of maps is defined by the following characterizations of Fr\'{e}chet--Urysohn spaces and $\kappa$-Fr\'{e}chet--Urysohn spaces obtained by Arhangel'skii \cite{Arhan63} and Liu and Ludwig \cite{LiL}, respectively.
\begin{theorem} \label{t:FU-kFU}
Let $X$ be a space. Then:
\begin{enumerate}
\item[{\rm(i)}] $X$ is Fr\'{e}chet--Urysohn if, and only if, $X$ is a pseudo open image of a metric space.
\item[{\rm(ii)}] $X$ is  $\kappa$-Fr\'{e}chet--Urysohn if, and only if, $X$ is a $\kappa$-pseudo open image of a metric space.
\end{enumerate}
\end{theorem}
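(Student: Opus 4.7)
The plan is to prove both parts with a single construction in the forward direction and a parallel argument in the converse. Let $\mathcal{S}$ be the set of all convergent sequences $s = \{x\} \cup \{x_n : n \in \w\}$ in $X$ (with $x_n \to x$); each such $s$ is a compact metrizable subspace of $X$, so the topological sum $M := \bigoplus_{s \in \mathcal{S}} s$ is metrizable, and the tautological map $f : M \to X$ is continuous and surjective (constant sequences cover every point).

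For the forward direction of (i), I would show $f$ is pseudo open by contradiction. Assume $f^{-1}(x) \subseteq U$ with $U$ open in $M$ but $x \notin \Int(f(U))$. Then $x \in \overline{X \SM f(U)}$, and Fr\'{e}chet--Urysohnness supplies a sequence $y_n \to x$ with $y_n \in X \SM f(U)$. The summand $s = \{x\} \cup \{y_n\}$ embeds into $M$; the copy of $x$ there lies in $f^{-1}(x) \subseteq U$, so $U \cap s$ is a neighborhood of that copy inside $s$ and must contain a tail of the $y_n$'s, contradicting $y_n \notin f(U)$. The forward direction of (ii) is the same argument with $f(U)$ replaced by $\overline{f(U)}$: if $x \notin \Int(\overline{f(U)})$ then $x$ lies in the closure of the open set $X \SM \overline{f(U)}$, and $\kappa$-Fr\'{e}chet--Urysohnness delivers the required sequence.

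For the converse of (i), let $f : M \to X$ be pseudo open and continuous with $M$ metric, and let $x \in \overline{A} \SM A$. Set $B := f^{-1}(A)$. I claim $f^{-1}(x) \cap \overline{B} \neq \es$: otherwise $f^{-1}(x) \subseteq M \SM \overline{B}$, which is open, so pseudo openness yields $x \in \Int(f(M \SM \overline{B}))$; this open neighborhood of $x$ meets $A$ at some point, which pulls back to some $m \in M \SM \overline{B}$ with $f(m) \in A$, contradicting $m \notin B$. Picking $p \in f^{-1}(x) \cap \overline{B}$, metrizability of $M$ produces $b_n \to p$ with $b_n \in B$, and then $f(b_n) \to x$ with $f(b_n) \in A$. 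For the converse of (ii), apply the same scheme with $B := f^{-1}(W)$ for $W$ open in $X$ and $x \in \overline{W} \SM W$: if $f^{-1}(x) \cap \overline{B} = \es$, then $\kappa$-pseudo openness gives $x \in \Int(\overline{f(M \SM \overline{B})})$, a neighborhood of $x$ that contains some $w \in W$; because $W$ is itself an open neighborhood of $w \in \overline{f(M \SM \overline{B})}$, we obtain $W \cap f(M \SM \overline{B}) \neq \es$ and the same contradiction.

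The main subtlety is the two-layer closure/interior chase in the converse of (ii), where I exploit the openness of $W$ to convert the closure-membership $w \in \overline{f(M \SM \overline{B})}$ into an actual preimage in $M \SM \overline{B}$; the rest is a direct adaptation of the classical Arhangel'skii argument.
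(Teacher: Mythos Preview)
Your argument is correct. Note, however, that the paper does not prove Theorem~\ref{t:FU-kFU} itself: it is quoted as a known result of Arhangel'skii and of Liu--Ludwig. What the paper does prove is the closely related Theorem~\ref{t:character-kappa-FU}, and your proof lines up with that one almost exactly---the same space $M=\bigoplus\mathcal{S}(X)$, the same tautological map, and the same mechanism in both directions.

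The one organizational difference is that the paper isolates the step ``$\kappa$-pseudo open $\Rightarrow f^{-1}(y)\cap\overline{f^{-1}(U)}\neq\emptyset$'' as part of a stand-alone characterization (Lemma~\ref{l:charac-dagger-map}), and then applies it cleanly in Proposition~\ref{p:quot-oca}(ii) to push $\kappa$-Fr\'echet--Urysohnness forward. Your converse of (ii) reproves exactly this implication inline via the two-layer closure/interior chase you flag at the end. Both routes work; the paper's factoring pays off because the same lemma is reused for open-compact attainable spaces, while your self-contained version avoids the detour through an auxiliary equivalence.
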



In Section \ref{sec:maps}
we introduce a new class of continuous maps, namely, the class of weakly $\kappa$-pseudo open maps between two topological spaces (see Definition \ref{def:func-woca}). In Lemmas \ref{l:charac-dagger-map} and \ref{l:charac-dagger-map-weak} we characterize $\kappa$-pseudo open maps  and weakly $\kappa$-pseudo open maps, using of which we show in Proposition \ref{p:composition-dagger} that the composition of (weakly) $\kappa$-pseudo open maps is (weakly) $\kappa$-pseudo open. In Proposition \ref{p:quot-oca} it is shown that open-compact attainable spaces and $\kappa$-Fr\'{e}chet--Urysohn spaces are preserved under taking  $\kappa$-pseudo open map images, and in Proposition \ref{p:quotient-woca} we show that the class of weakly open-compact attainable spaces and the class of $\kappa$-sequential spaces are stable under taking  weakly  $\kappa$-pseudo open map images.
\smallskip

In Section \ref{sec:character-oca}, using \dag-maps and weakly \dag-maps defined in Section \ref{sec:permanent}, we characterize  open-compact attainable spaces, weakly open-compact attainable spaces, $\kappa$-sequential spaces and give several new characterizations of $\kappa$-Fr\'{e}chet--Urysohn spaces complementing and strengthening (ii) of Theorem \ref{t:FU-kFU}.
\smallskip

In the last Section \ref{sec:LCG-LCA-Bohr} we show in Theorem \ref{t:feathered-group-kFU} that any feathered topological group is  $\kappa$-Fr\'{e}chet--Urysohn. Hence any locally compact group is $\kappa$-Fr\'{e}chet--Urysohn, see Corollary \ref{c:lcg-kFU}. In Theorems \ref{t:LCA-Bohr-kFU} and \ref{t:LCA-Bohr-woca} we characterize locally compact abelian groups $G$ such that $G$ endowed with the Bohr topology has one of the properties from the diagram.



\section{General relationships and examples} \label{sec:RE}


In this section we establish general relationships between the new notions introduced  in Definition \ref{def:oca} and other notions from the diagram in the introduction by proving some results and giving (counter)examples.

We start with the next proposition which allows us to construct in Example \ref{exa:OCA-varphi} a sequential space which is not open-compact attainable.  
\begin{proposition} \label{p:OCA-kappaFU}
Let $X$ be a space such that each compact subset of $X$ is Fr\'{e}chet--Urysohn (for example, $X$ is angelic). Then:
\begin{enumerate}
\item[{\rm(i)}] $X$ is open-compact attainable if, and only if, it is $\kappa$-Fr\'{e}chet--Urysohn;
\item[{\rm(ii)}] $X$ is weakly open-compact attainable if, and only if, it is $\kappa$-sequential.
\end{enumerate}
\end{proposition}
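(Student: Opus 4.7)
The plan is to prove (i) and (ii) by the same template, since the two equivalences have parallel proofs that differ only in the quantification over $z$ and $W$. In each equivalence one direction is free and does not need the hypothesis on compact subsets, while the converse is where the Fréchet--Urysohn hypothesis on compacta enters.

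For the forward direction, suppose $X$ is $\kappa$-Fréchet--Urysohn (resp., $\kappa$-sequential). Given the required data, namely an open set $W$ together with some $z\in\overline{W}$ (resp., a non-closed open $W$ together with a chosen $z\in\overline{W}\SM W$), one obtains a sequence $\{x_n\}_{n\in\w}\subseteq W$ converging to $z$. Then $K:=\{x_n:n\in\w\}\cup\{z\}$ is a compact subset of $X$ containing $z$, and since each $x_n\in K\cap W$, we have $z\in\overline{K\cap W}$. Thus $X$ is open-compact attainable (resp., weakly open-compact attainable).

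For the reverse direction, assume $X$ is open-compact attainable (resp., weakly open-compact attainable). Given an open set $W$ and a point $z\in\overline{W}$ (resp., a non-closed open $W$ and the point $z\in\overline{W}\SM W$ furnished by the definition), pick a compact $K\subseteq X$ with $z\in K\cap\overline{K\cap W}$. Since $K$ is closed in the Hausdorff space $X$, the closures in $K$ and in $X$ agree, so $z\in\overline{K\cap W}^{K}$. In case (ii), $z\notin W$ forces $z\notin K\cap W$; in case (i) one may assume $z\notin K\cap W$, as otherwise the constant sequence at $z$ already lies in $W$ and converges to $z$. The hypothesis that $K$ is Fréchet--Urysohn then yields a sequence $\{x_n\}_{n\in\w}\subseteq K\cap W\subseteq W$ converging to $z$, which is exactly what $\kappa$-Fréchet--Urysohness (resp., $\kappa$-sequentiality) demands.

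No serious obstacle arises; the only point worth flagging is that the hypothesis is precisely what is needed to upgrade a compact witness to a sequential witness, and the parenthetical remark about angelic spaces is justified by the standard fact that every compact subset of an angelic space is Fréchet--Urysohn (closed subsets of angelic spaces are angelic, and a compact angelic space is Fréchet--Urysohn). I would include a one-line reminder of this before invoking it.
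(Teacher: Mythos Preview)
Your proof is correct and follows essentially the same approach as the paper. The only cosmetic difference is that the paper proves only the non-trivial directions (open-compact attainable $\Rightarrow$ $\kappa$-Fr\'{e}chet--Urysohn, and weakly open-compact attainable $\Rightarrow$ $\kappa$-sequential), treating the converse implications as already established in the general diagram; your explicit argument for the easy directions via $K=\{x_n\}\cup\{z\}$ is fine and matches how those implications are justified elsewhere in the paper.
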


\begin{proof}
(i) Assume that $X$ is open-compact attainable. To show that $X$ is  $\kappa$-Fr\'{e}chet--Urysohn, let $U$ be an open subset of $X$ and $x\in \overline{U}$. If $x\in U$, then the constant sequence $\{x\}\subseteq U$ converges to $x$. Assume that $x\not\in U$. Since $X$ is open-compact attainable, there exists $L\subseteq U$ such that $\overline{L}$ is compact and $x\in \overline{L}$. By assumption $\overline{L}$ is Fr\'{e}chet--Urysohn, and hence there is a sequence $\{x_n\}_{n\in\w}$ in $L$ which converges to $x$. As $L\subseteq U$, we also have $\{x_n\}_{n\in\w}\subseteq U$. Thus $X$ is a $\kappa$-Fr\'{e}chet--Urysohn space.
\smallskip

(ii) Assume that $X$ is weakly open-compact attainable.  Let $W$ be a non-closed open subset of $X$. Then there are a point $z\in \overline{W}\SM W$ and a compact subset $K$ of $X$ such that $z\in K\cap \overline{K\cap W}$. Since $K$ is Fr\'{e}chet--Urysohn and $z\in\overline{K\cap W}$, there is a sequence  $\{x_n\}_{n\in\w}\subseteq K\cap W\subseteq W$ converging to $z$. Thus $X$ is  $\kappa$-sequential.\qed
\end{proof}

Recall that the space $\varphi$ is the direct locally convex sum $\bigoplus_{n\in\w} E_n$ with $E_n=\mathbf{F}$ for all $n\in\w$. It is well known that $\varphi$ is a sequential non-Fr\'{e}chet--Urysohn space, see Example 1 of \cite{nyikos}. Below we strengthen the negative part of this assertion.

\begin{example}  \label{exa:OCA-varphi}
$\varphi$ is a sequential non-open-compact attainable space.
\end{example}

\begin{proof}
It is well-known that any bounded subspace of $\varphi$ is finite-dimensional and hence metrizable. Since, by Proposition 2.9 of \cite{Gabr-B1}, $\varphi$ is not $\kappa$-Fr\'{e}chet--Urysohn, Proposition \ref{p:OCA-kappaFU} implies that $\varphi$ is also not open-compact attainable.\qed
\end{proof}

The Arens space $S_2$ is one of the most important and famous sequential spaces which is not Fr\'{e}chet--Urysohn, see Example 1.6.19 of \cite{Eng}. Below we naturally generalize  $S_2$.

Let $\lambda$ be an infinite ordinal with its standard ordered topology. We denote by $S_2(\lambda)$ the union
\[
S_2(\lambda):=  (\lambda+1) \cup (\NN\times\lambda)
\]
The set $S_2(\lambda)$ is topologized as follows. Each point of $\NN\times\lambda$ is isolated; a neighborhood base at each $\alpha\leq\lambda$ is the family
$
(\gamma,\alpha]\cup \bigcup_{\gamma<i\leq \alpha} V_{n_i,i}
$
for some $\gamma\in \alpha$ and $n_i\in\NN$, where
\[
V_{n_i,i}:=\big\{(k,i)\in\NN\times\{i\}: k\geq n_i\big\} \; \mbox{ if $i<\lambda$, and }\; V_{n_i,i}:=\{\lambda\} \mbox{ if $i=\lambda$}.
\]
It is easy to see that $S_2(\lambda)$ is a Tychonoff space. By definition $S_2(\w)=S_2$.

Recall that $V(\lambda)$ denotes the Fr\'{e}chet--Urysohn fan of cardinality $\lambda$, that is,
\[
V(\lambda):=\{x_\infty\}\cup \{x_{n,\alpha}: n\in\w, \alpha\in\lambda\},
\]
where the point $x_\infty$ is the unique non-isolated point in $V(\lambda)$, and a subset $U\subseteq V(\lambda)$, $x_\infty\in U$, is a neighborhood of $x_\infty$ if and only if $|\{v\in\w: x_{n,\alpha}\notin U\}|<\w$ for each $\alpha<\lambda$. Recall also that $x_{n,\alpha}\to x_\infty$ for every $\alpha\in\lambda$, and for every compact subset $K$ of $V(\lambda)$, there is a finite subset $F$ of $\lambda$ such that
\[
K\subseteq \{x_\infty\}\cup \{x_{n,\alpha}: n\in\w, \alpha\in F\}.
\]

By Example 2.4 in \cite{LiL}, $S_2$ is not $\kappa$-Fr\'{e}chet--Urysohn. In (iv) of the next example we strengthen this result.

\begin{example} \label{exa:S2-lambda}
Let $\lambda$ be an infinite ordinal.
\begin{enumerate}
\item[{\rm(i)}] For every compact subset $K$ of  $S_2(\lambda)$ there is a finite subset $F$ of $\lambda$ such that
\[
K\subseteq \big(\NN \times F\big) \cup (\lambda+1).
\]
\item[{\rm(ii)}] The map $f:S_2(\lambda)\to V(\lambda)$ defined by
\[
f(n,i)=(n,i) \mbox{ if $(n,i)\in \NN\times\lambda$, and } f(i)=i \mbox{ if $i\in \lambda+1$},
\]
is perfect.
\item[{\rm(iii)}] If $\mu\in\lambda$, then $S_2(\mu)$ is a closed subspace of $S_2(\lambda)$.
\item[{\rm(iv)}] $S_2(\lambda)$  is not an open-compact attainable space.
\item[{\rm(v)}] $S_2(\lambda)$ is a $k$-space and hence an Ascoli weakly open-compact attainable space.
\item[{\rm(vi)}] If $\lambda$ is countable, then $S_2(\lambda)^n$  is a sequential  space for every $n\in\NN$.
\item[{\rm(vii)}] If $\lambda\geq\w_1$, then $S_2(\lambda)$  is not a $\kappa$-sequential space.
\end{enumerate}
\end{example}

\begin{proof}
(i) Let $I:=\pr_{\lambda} \big(K\SM (\lambda+1)\big)$ be the projection of $K\SM (\lambda+1)$ onto  $\lambda$. To prove (i) we show that $I$ is finite. Suppose for a contradiction that $I$ is infinite. Take a sequence $\alpha_0<\alpha_1<\cdots$ in $I$. For every $k\in\w$, choose a point $(m_k,\alpha_k)\in K$. Then the sequence $S=\{(m_k,\alpha_k)\}_{k\in\w}$ is discrete in the compact set $K$. Therefore to get a contradiction it suffices to show that $S$ is closed.

Let $x\in S_2(\lambda)\SM S$. Assume that $x\leq\lambda$. If $x\not\in \{\alpha_k\}_{k\in\w}$, then the neighborhood $\{x\}\cup\{n\in\w: n\in\w\}$ of $x$ does not intersect $S$. Define a standard neighborhood $U$ of $x$ by
\[
U=(0,x]\cup \bigcup_{0<\alpha\leq x} \big\{(i,\alpha)\in\NN\times\{\alpha\}: i\geq n_\alpha\big\}\;\; \mbox{ if $x\in\lambda$},
\]
or
\[
U=(0,x]\cup \bigcup_{0<\alpha< x} \big\{(i,\alpha)\in\NN\times\{\alpha\}: i\geq n_\alpha\big\}\;\; \mbox{ if $x=\lambda$},
\]
where $n_{\alpha_k}>m_k$ and $n_\alpha=1$ if $\alpha\not\in \{\alpha_k\}_{k\in\w}$. Then $U\cap S=\emptyset$, and hence $x\not\in \overline{S}$.
If $x\in (\NN\times \lambda)\SM S$, then $x$ is isolated, and hence $x\not\in \overline{S}$. These cases show that $S$ is closed.
\smallskip

(ii) Recall that $V(\lambda)$ is a Fr\'{e}chet--Urysohn space and hence a $k$-space. Therefore, by Theorem 3.7.17 of \cite{Eng}, the map $f:S_2(\lambda)\to V(\lambda)$ is perfect if (and only if) for every compact subset $K$ of  $V(\lambda)$, the restriction of $f$ onto $f^{-1}(K)$ is perfect.

Recall also that $K$ sits in $\{x_\infty\} \cup \bigcup_{i\in F} S_i$ for some finite $F\subseteq \lambda$. Therefore,   $f^{-1}(K)$ is a closed subset of $C:=(\lambda+1) \cup \bigcup_{i\in F} \{ (n,i): n\in\NN\}$. By (i), $C$ is a compact subset of $S_2(\lambda)$. It follows that $f$ is perfect.
\smallskip

(iii) holds true by definition.
\smallskip

(iv) To show that  $S_2(\lambda)$  is not open-compact attainable, we consider the open subset  $U=\NN\times\NN$ of $S_2(\lambda)$  and observe that $\w\in \overline{U}\SM U$. Let now $K$ be a compact subset of $S_2(\lambda)$. Then, by (i),  $U\cap K$ sits in $\NN\times \{0,\dots,m\}$ for some $m\in\w$. It is clear that $\w\not\in \overline{U\cap K}$. Thus $S_2(\lambda)$ is not an open-compact attainable space.
\smallskip


(v) Since $V(\lambda)$ is Fr\'{e}chet--Urysohn and hence a $k$-space and since $f$ is perfect by (ii), Theorem 3.7.25 of \cite{Eng} implies that  $S_2(\lambda)$ is  a $k$-space.

(vi) Assume that $\lambda$ is countable. By Theorem 3.7.7 of \cite{Eng}, $f^n$ is also a perfect map. Since $\lambda$ is countable, $V(\lambda)$ is a $k_\w$-space. Therefore, by (4) of \cite{Franklin-Smith}, it follows that  $V(\lambda)^n$ is a $k_\w$-space for every $n\in\NN$. These two facts and Theorem 3.7.25 of \cite{Eng} imply that also $S_2(\lambda)^n$ is a $k$-space. As $S_2(\lambda)^n$ is countable, each of its compact subset is metrizable. Thus $S_2(\lambda)^n$ is a sequential space.
\smallskip

(vii) Let $W=\big(\NN\times[0,\w_1)\big)\cup [0,\w_1)$. Then $W$ is open in $S_2(\lambda)$ and
\[
\overline{W}\subseteq W\cup\big(\NN\times\{\w_1\}\big) \mbox{ if } \lambda>\w_1, \mbox{ and } \overline{W}=W\cup\{\w_1\} \mbox{ if } \lambda=\w_1.
\]
Since the cofinality of $\w_1$ is $\w_1$, it is clear that $W$ has no a sequence which converges to a point of $\overline{W}\SM W$. Thus $S_2(\lambda)$  is not $\kappa$-sequential.\qed
\end{proof}
In particular, by (iv) and (vi) of Example \ref{exa:S2-lambda}, the Arens space $S_2$ is a sequential non-open-compact attainable space.

The next assertion is evidently follows from the definition of weakly open-compact attainable spaces.
\begin{proposition} \label{p:non-woca}
If a space $X$ contains an open non-closed subset $U$ whose intersection with any compact subset of $X$ is closed is not weakly open-compact attainable.
In particular, if $X$ is a non-discrete space whose compact subsets are finite, then $X$  is not weakly open-compact attainable.
\end{proposition}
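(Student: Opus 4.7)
My plan is to verify the statement directly from the definition of weakly open-compact attainability. Recall that to witness that $X$ fails this property, it suffices to exhibit one non-closed open set $W\subseteq X$ such that for every $z\in \overline{W}\SM W$ and every compact $K\subseteq X$, one has $z\notin K\cap \overline{K\cap W}$.

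For the main assertion, I would take $W:=U$, where $U$ is the hypothesized open non-closed subset with $U\cap K$ closed in $X$ for every compact $K\subseteq X$. Fix any $z\in \overline{U}\SM U$ and any compact $K\subseteq X$. Since $K\cap U$ is closed in $X$ by hypothesis, we have $\overline{K\cap U}=K\cap U\subseteq U$, so $z\notin \overline{K\cap U}$, and a fortiori $z\notin K\cap\overline{K\cap U}$. Hence no choice of $(z,K)$ works, and $X$ is not weakly open-compact attainable.

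For the ``in particular'' clause, assume $X$ is non-discrete and every compact subset of $X$ is finite. Pick a non-isolated point $x_0\in X$. Since $X$ is Tychonoff (hence $T_1$), the singleton $\{x_0\}$ is closed, so $U:=X\SM\{x_0\}$ is open; and $U$ is not closed because $x_0\in \overline{U}\SM U$ (every neighborhood of $x_0$ meets $U$ as $x_0$ is non-isolated). For any compact $K\subseteq X$, $K$ is finite by assumption, so $U\cap K$ is finite and thus closed in $X$. The hypothesis of the first part is therefore satisfied, and we conclude that $X$ is not weakly open-compact attainable.

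There is no real obstacle here: the argument is a direct unpacking of the definition, with the only small point being the existence of the non-isolated point $x_0$ in the corollary, which follows immediately from non-discreteness. I would write the proof as a single short paragraph covering the general statement, followed by a one-line reduction of the ``in particular'' clause to it.
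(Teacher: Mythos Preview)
Your argument is correct and is exactly the direct unpacking of the definition that the paper has in mind; indeed, the paper states this proposition without proof, noting that it ``evidently follows from the definition of weakly open-compact attainable spaces.'' Your handling of the ``in particular'' clause via a non-isolated point is the natural one and needs no change.
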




\begin{remark} {\em
(i) Let $Y=\NN\cup\{x_\ast\}$ where $x_\ast\in \beta\NN\SM \NN$, and let $X=\IR^\mathfrak{c}$. We consider $Y$ as a closed subset of $X$. Then $X$ is a $\kappa$-Fr\'{e}chet--Urysohn space (as a product of $\IR$, see \cite{LiL}). Since $Y$ has no infinite compact sets and is closed in $X$, it follows that $X$ is not a $k'$-space.

(ii) Clearly, the space $\beta\NN$ is a $k'$-space. But since $\beta\NN$ has no non-trivial convergent sequences it is not $\kappa$-Fr\'{e}chet--Urysohn.}
\end{remark}



Now we consider relationships between $k_\IR$-spaces and $s_\IR$-spaces and weakly open-compact attainable spaces and  $\kappa$-sequential spaces, respectively.

\begin{proposition} \label{p:wOCA-k-space}
\begin{enumerate}
\item[{\rm(i)}] Each $k_\IR$-space $X$ is weakly open-compact attainable.
\item[{\rm(ii)}] Each $s_\IR$-space $X$ is $\kappa$-sequential.
\item[{\rm(iii)}] Each $k'$-space is is open-compact  attainable. In particular, every locally compact space $X$ is open-compact attainable. 
\item[{\rm(iv)}] Each $\kappa$-Fr\'{e}chet--Urysohn space is open-compact attainable.
\item[{\rm(v)}] Each open-compact attainable space $X$ is an Ascoli space.
\end{enumerate}
\end{proposition}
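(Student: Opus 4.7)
The plan is to prove the five items separately, using straightforward constructions for (iii), (iv), (v), and a $\chi_W$-as-test-function contrapositive for (i) and (ii).

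For (iii), given an open $W \subseteq X$ and $z \in \overline{W}$, if $z \in W$ I simply take $K := \{z\}$. Otherwise I invoke the $k'$-space property on $A := W$ to obtain a compact $K_0 \subseteq X$ with $z \in \overline{K_0 \cap W}$, and enlarge to the still-compact set $K := K_0 \cup \{z\}$; then $z \in K \cap \overline{K \cap W}$. The locally compact case follows either by quoting that locally compact spaces are $k'$-spaces, or directly by taking $K$ a compact neighborhood of $z$ and noting that every open neighborhood of $z$ meets $W \cap \Int(K) \subseteq W \cap K$. For (iv), $\kappa$-Fr\'{e}chet--Urysohness supplies a sequence $\{x_n\}_{n\in\w} \subseteq W$ with $x_n \to z$; the set $K := \{x_n : n\in\w\} \cup \{z\}$ is compact and witnesses open-compact attainability at $z$. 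For (v), given $W$ open and $z \in \overline{W}$, pick $K$ with $z \in K \cap \overline{K \cap W}$ and set $L := K \cap W \subseteq W$; then $\overline{L} \subseteq K$ is compact and $z \in \overline{L}$, so $X$ satisfies condition $(\star)$ recorded in the introduction, which is already known to imply Ascoli.

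For (i), I argue by contrapositive. Assume $X$ is not weakly open-compact attainable, witnessed by a non-closed open $W$ such that for every compact $K$ one has $K \cap \overline{K \cap W} \cap (\overline{W} \setminus W) = \es$. The key identity is that for any compact (hence closed) $K$,
\[
\overline{K \cap W} \;=\; \overline{K \cap W} \cap K \;\subseteq\; \overline{W} \cap K \;\subseteq\; W \cap K,
\]
where the last inclusion uses the hypothesis; combined with the trivial inclusion $K \cap W \subseteq \overline{K \cap W}$ this gives $\overline{K \cap W} = K \cap W$. Thus $K \cap W$ is closed in $K$, and being simultaneously open in $K$, the restriction $\chi_W\!\uhr\! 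K$ is continuous. Hence $\chi_W : X \to \IR$ is $k$-continuous; the $k_\IR$ hypothesis then forces $\chi_W$ to be continuous, so $W$ is clopen, contradicting non-closedness. The proof of (ii) is strictly parallel, with compact sets replaced by convergent sequences: if $W$ is open, non-closed, and no sequence in $W$ converges to a point of $\overline{W} \setminus W$, then for any convergent sequence $x_n \to x$ I check sequential continuity of $\chi_W$ at $x$ via three cases ($x \in W$, $x \notin \overline{W}$, $x \in \overline{W} \setminus W$), the last being handled precisely by the no-sequence hypothesis applied to a contradictory subsequence; the $s_\IR$ property then delivers the same contradiction.

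Nothing here is genuinely hard: the only careful point is the equality $\overline{K \cap W} = K \cap W$ derived from the negation of weak open-compact attainability in (i), and its sequential analogue in (ii). The remaining items are either immediate from the definitions or are direct re-expressions of condition $(\star)$ quoted from the introduction.
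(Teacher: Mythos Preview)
Your proof is correct and follows essentially the same route as the paper's: the contrapositive via the characteristic function $\chi_W$ for (i) and (ii), the direct constructions for (iii) and (iv), and the reduction to condition~$(\star)$ for (v) are all exactly what the paper does (the paper just writes ``obvious'' for (iii)--(iv) and cites \cite{Gabr-B1} for (v)).

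One small slip to fix in your displayed chain for (i): the inclusion $\overline{W}\cap K \subseteq W\cap K$ is \emph{not} a consequence of the hypothesis (take $K=\{z\}$ for any $z\in\overline{W}\setminus W$; then $K\cap W=\emptyset$ so the hypothesis is vacuous, yet $\overline{W}\cap K=\{z\}\not\subseteq\emptyset$). What the hypothesis actually gives is $\overline{K\cap W}\cap(\overline{W}\setminus W)=\emptyset$; combined with $\overline{K\cap W}\subseteq\overline{W}$ this yields $\overline{K\cap W}\subseteq W$, and together with $\overline{K\cap W}\subseteq K$ you get the desired equality $\overline{K\cap W}=K\cap W$. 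Your conclusion is right, only the intermediate step needs rerouting.
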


\begin{proof}
(i) Let $W$ be a non-closed open subset of $X$, and suppose for a contradiction that for every $z\in \overline{W}\SM W$ and each compact subset $K$ of $X$, we have $z\not\in K\cap \overline{K\cap W}$. Then $K\cap W$ is closed (otherwise, any point $z\in \overline{K\cap W}\SM (K\cap W)$ belongs to $K\cap \overline{W}$, and hence $z\in \overline{W}\SM W$ and $z\in K\cap \overline{K\cap W}$). Therefore, for each compact subset $K$ of $X$, $K\SM W$ and $K\cap W$ are disjoint compact sets. Then the characteristic function $\mathbf{1}_W$ is $k$-continuous and discontinuous. Therefore, $X$ is not a $k_\IR$-space, a contradiction.



(ii) Suppose for a contradiction that $X$ is not a $\kappa$-sequential space. Then there is an open non-closed subset $W$ of $X$ such that there is no sequence in $W$ converging to a point of $\overline{W}\SM W$. Define a function $f:X\to \IR$ by
\[
f(x)=1 \;\mbox{ if } x\in W, \; \mbox{ and } \; f(x)=0 \;\mbox{ if } x\in X\SM W.
\]
By the assumption on $W$, we obtain that $f$  is sequentially continuous. Since $W$ is not closed, $f$ is not continuous and hence $X$ is not an  $s_\IR$-space. This contradiction finishes the proof.

(iii) and (iv) are  obvious. 

(v) is Theorem 2.5 of \cite{Gabr-B1}.
\qed
\end{proof}



Let $X=\prod_{\alpha\in \AAA}X_\alpha$ be a product of spaces. For $x=(x_\alpha)_\alpha, y=(y_\alpha)_\alpha\in X$, let
\[
\begin{aligned}
\delta(x,y) &= \{ \alpha\in \AAA : x_\alpha\neq y_\alpha\},\\
\Sigma(X,x) &= \{ y\in X : |\delta(x,y)|\leq\w\}.
\end{aligned}
\]
The set $\Sigma(X,x)$ is called the {\em $\Sigma$-product in $X$ around the point $x$}.

The next example also shows that there are Ascoli pseudocompact spaces which are not weakly open-compact attainable.

\begin{example} \label{exa:non-woca-Ascoli}
Let $X=S\cup\{\mathbf{1}\}$, where $S$ is the $\Sigma$-product in $[0,1]^{\w_2}$ around the zero function $\mathbf{0}$ and $\mathbf{1}$ is the unit function on $\w_2$. Then:
\begin{enumerate}
\item[{\rm(i)}] $S$ is a dense Fr\'{e}chet--Urysohn, sequentially compact and $\w$-bounded subspace of $X$;
\item[{\rm(ii)}]  $X$ is an Ascoli pseudocompact space which is not weakly open-compact attainable.
\end{enumerate}
\end{example}

\begin{proof}
(i) is well-known, see Exercise 3.10.D of \cite{Eng}.

(ii) Since $S$ is a dense $\w$-bounded subset of $X$, the space $X$ is Ascoli by Proposition 2.2 of \cite{Gabr-pseudo-Ascoli}. On the other hand, $S$ is open and $\mathbf{1}\in \overline{S}$. So, to prove that $X$ is not weakly open-compact attainable it suffices to show that there is no compact subset $K$ of $X$ such that $\mathbf{1}\in K\cap \overline{K\cap S}$. To this end,  we show that $F:=K\cap S$ is closed hence compact in $X$ (see Proposition \ref{p:non-woca}). 

Suppose for a contradiction that $F$ is not closed. Then $\mathbf{1}\in \overline{F}$.  For every $\alpha<\w_1 <\w_2$, we consider  the natural projection $\pi_\alpha:[0,1]^{\w_2} \to [0,1]^\alpha$. Since $S$ is sequentially compact and $K$ is compact, it follows that $F$ is sequentially compact. As $\alpha$ is countable, we obtain that $\pi_\alpha(F)$ is a sequentially compact subset of the metrizable compact space $[0,1]^\alpha$. Therefore $\pi_\alpha(F)$ is compact, and hence $\pi_\alpha(\mathbf{1})\in \pi_\alpha(F)=\pi_\alpha(\overline{F})$. Choose $q_\alpha\in F$ such that $\pi_\alpha(\mathbf{1})=\pi_\alpha(q_\alpha)$.

Consider the set $Q:=\{q_\alpha: \alpha<\w_1\}\subseteq F$. As $Q\subseteq K$, the set $Q$ has a cluster point $y=(y_i)\in K$. Taking into account that $\pi_\alpha(\mathbf{1})=\pi_\alpha(q_\alpha)$ for every $\alpha<\w_1$, we obtain that $\pi_{\w_1}(\mathbf{1})=\pi_{\w_1}(y)$. Therefore $y$ has at least $\w_1$ non-zero coordinates and hence $y=\mathbf{1}$. On the other hand, the support $A$ of $Q$ has cardinality $\w_1$. As $\w_1 <\w_2$, we obtain that $y_i=0$ for every $i\in\w_2\SM A$. Therefore $y\not=\mathbf{1}$, a contradiction.\qed
\end{proof}

Recall that a space $X$ is called a {\em $P$-space} if  the intersection of any countably many open sets is open.

\begin{proposition} \label{p:seq-Ascoli-non-woca}
Each non-discrete $P$-space $X$ is sequentially Ascoli but not weakly open-compact attainable.
\end{proposition}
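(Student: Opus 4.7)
The plan is to derive two standard properties of (Tychonoff) $P$-spaces and then feed them into Proposition \ref{p:non-woca} and into a direct equicontinuity computation.

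First I would record the following two auxiliary facts about a $P$-space $X$.
\textbf{(a)} Every countable subset of $X$ is closed and discrete. Indeed, since $X$ is $T_1$, singletons are closed, and for a countable $A\subseteq X$ and $a\in A$ the set $X\setminus(A\setminus\{a\})=\bigcap_{b\in A\setminus\{a\}}(X\setminus\{b\})$ is a countable intersection of open sets, hence open; so $\{a\}$ is open in $A$, and the same argument with $A$ in place of $A\setminus\{a\}$ shows $A$ is closed.
\textbf{(b)} Every compact subset of $X$ is finite. Otherwise a compact $K\subseteq X$ would contain a countably infinite $A$; by (a) $A$ is closed and discrete in $X$, hence also in $K$, contradicting compactness. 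In particular, the only convergent sequences in $X$ are the eventually constant ones, and $C_k(X)=C_p(X)$.

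Now I would prove that $X$ is sequentially Ascoli. Let $(f_n)_{n\in\omega}$ be a convergent sequence in $C_k(X)$; I want to show it is equicontinuous at every $x_0\in X$. Fix $x_0\in X$ and $\varepsilon>0$. By continuity of each $f_n$ at $x_0$, choose an open neighborhood $U_n$ of $x_0$ with
\[
|f_n(x)-f_n(x_0)|<\varepsilon \quad \text{for all } x\in U_n.
\]
Since $X$ is a $P$-space, the set $U:=\bigcap_{n\in\omega}U_n$ is again an open neighborhood of $x_0$, and for every $x\in U$ and every $n\in\omega$ we have $|f_n(x)-f_n(x_0)|<\varepsilon$. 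Thus $(f_n)$ is equicontinuous at $x_0$, so $X$ is sequentially Ascoli. (In fact the same argument shows that any countable subfamily of $C(X)$ is equicontinuous, independently of convergence.)

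For the second half, I would apply Proposition \ref{p:non-woca} directly. By hypothesis $X$ is non-discrete, and by (b) every compact subset of $X$ is finite (hence in particular intersects every open set in a finite, therefore closed, subset). Proposition \ref{p:non-woca} then yields that $X$ is not weakly open-compact attainable, completing the proof. No step presents a real obstacle; the only thing to be careful about is invoking (a) correctly so that the finiteness of compact subsets in (b) follows cleanly, after which both conclusions are immediate.
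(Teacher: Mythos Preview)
Your proof is correct. The overall structure matches the paper's: show $P$-spaces have no infinite compact subsets, feed this (together with non-discreteness) into Proposition~\ref{p:non-woca} for the ``not weakly open-compact attainable'' half, and verify sequential Ascoli-ness separately. The difference is that the paper simply cites external results for both ingredients (Proposition~2.9 and Lemma~2.8 of \cite{Gabr:weak-bar-L(X)}), whereas you supply short direct arguments: the finiteness of compact subsets via the closed-discreteness of countable sets, and the equicontinuity of any countable family of continuous functions via the countable-intersection property of a $P$-space. Your route is more self-contained and in fact proves slightly more (every countable subset of $C(X)$ is equicontinuous, regardless of convergence), while the paper's route is terser; neither approach presents any real difficulty.
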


\begin{proof}
The space $X$ is sequentially Ascoli by Proposition 2.9 of  \cite{Gabr:weak-bar-L(X)}. Since $P$-spaces has no infinite compact subsets (see Lemma 2.8 of \cite{Gabr:weak-bar-L(X)}) and $X$ is not discrete, Proposition \ref{p:non-woca} implies that $X$ is not weakly open-compact attainable.\qed
\end{proof}

The next proposition shows that $\kappa$-sequentiality and weak open-compact attainability are not of interest in the class of pass-connected spaces.

\begin{proposition}\label{p:pass-connected-k-sequential}
Each path-connected space $X$ is $\kappa$-sequential.
\end{proposition}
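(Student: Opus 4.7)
The plan is to verify the definition of $\kappa$-sequentiality directly, using a path to transport the question into $[0,1]$. Fix any non-closed open subset $W$ of $X$. Since the empty set is closed, $W$ is non-empty; since $W$ is non-closed, $\overline{W}\setminus W$ is non-empty. Pick $w\in W$ and $z_0\in \overline{W}\setminus W$, and use path-connectedness to obtain a continuous map $\gamma:[0,1]\to X$ with $\gamma(0)=w$ and $\gamma(1)=z_0$.

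Next I would pull $W$ back through $\gamma$. The set $T:=\gamma^{-1}(W)$ is open in $[0,1]$, contains $0$, and does not contain $1$, so its complement $[0,1]\setminus T$ is a non-empty closed subset of $[0,1]$. Set $s:=\inf\bigl([0,1]\setminus T\bigr)$. Because $T$ is a neighborhood of $0$ in $[0,1]$, one has $s>0$; because $[0,1]\setminus T$ is closed, one has $s\in [0,1]\setminus T$, and hence $\gamma(s)\notin W$; by the definition of the infimum, $[0,s)\subseteq T$, so $\gamma([0,s))\subseteq W$.

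The required sequence now comes for free. Choose any strictly increasing $t_n\in [0,s)$ with $t_n\to s$ (which exists precisely because $s>0$) and set $x_n:=\gamma(t_n)\in W$. By continuity of $\gamma$, $x_n\to z:=\gamma(s)$, and $z\in \overline{W}\setminus W$ since $z$ is a limit of points of $W$ but $z\notin W$. This exhibits the boundary point and the convergent sequence in $W$ demanded by the definition of $\kappa$-sequentiality, completing the proof. There is essentially no substantial obstacle; the only subtle points to record are that $W$ is automatically non-empty, that $s>0$ (so that a genuine sequence in $[0,s)$ converging to $s$ exists), and that $\gamma(s)\notin W$ (which uses closedness of $[0,1]\setminus T$).
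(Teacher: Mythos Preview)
Your proof is correct and follows essentially the same approach as the paper: pull the open set back along a path into $[0,1]$ and use the order structure of $[0,1]$ to produce a sequence in $W$ converging to a point outside $W$. The only cosmetic differences are that the paper connects $w$ to an arbitrary point of $X\setminus W$ rather than specifically to a point of $\overline{W}\setminus W$, and it invokes the existence of such a sequence in $[0,1]$ without spelling out the infimum argument you give.
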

\begin{proof}
Let $U\subseteq X$ be an open subset such that $\overline{U}\setminus U\neq\emptyset$.
Let $x_0\in U$ and $x_1\in X\setminus U$. Take a continuous path  $f: [0,1]\to X$ such that $f(0)=x_0$ and $f(1)=x_1$. Then $V=f^{-1}(U)$ is an open subset of $[0,1]$, $0\in V$ and $1\notin V$. Therefore there exists a sequence $(t_n)_n\subseteq V$ which converges to some point $t\in [0,1]\setminus V$. Let $y_n=f(t_n)$ and $y=f(t)$. Then the sequence $(y_n)_n\subseteq U$ converges to the point $y\in X\setminus U$. Thus $X$ is a $\kappa$-sequential space.\qed
\end{proof}

\begin{corollary} \label{c:lcs-k-sequential}
Each topological vector space is $\kappa$-sequential.
\end{corollary}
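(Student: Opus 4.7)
The plan is to derive this immediately from Proposition \ref{p:pass-connected-k-sequential} by verifying that every topological vector space is path-connected. Let $E$ be a topological vector space. For any two points $x,y \in E$, I would define the map $\gamma : [0,1] \to E$ by $\gamma(t) = (1-t)x + ty$. The continuity of $\gamma$ follows from the joint continuity of scalar multiplication $\mathbb{R} \times E \to E$ and addition $E \times E \to E$, together with the continuity of $t \mapsto 1-t$. Since $\gamma(0) = x$ and $\gamma(1) = y$, this is a path joining $x$ to $y$.

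Consequently $E$ is path-connected, and Proposition \ref{p:pass-connected-k-sequential} then yields that $E$ is $\kappa$-sequential.

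There is essentially no obstacle here; the only point worth noting is that the argument uses only the axioms of a topological vector space (continuity of addition and of scalar multiplication from $\mathbb{R}\times E$ to $E$), so no local convexity, Hausdorffness beyond what is assumed throughout the paper, or any other structural assumption is needed. One could equivalently observe that the straight-line segment from $0$ to any $x \in E$ provides a path from $0$ to $x$, so $E$ is even star-shaped with respect to $0$, which is a stronger property than path-connectedness but not needed here.
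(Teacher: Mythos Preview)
Your proof is correct and follows exactly the intended route: the paper states this corollary immediately after Proposition~\ref{p:pass-connected-k-sequential} without proof, so it is meant as a direct consequence of the path-connectedness of topological vector spaces, which you verify via the straight-line path.
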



Theorem \ref{t:Cp-Ascoli} and (iv) and (v) of Proposition \ref{p:wOCA-k-space} immediately imply
\begin{corollary} \label{c:Cp-oca}
$C_p(X)$ is an open-compact attainable space if, and only if, $C_p(X)$ is $\kappa$-Fr\'{e}chet--Urysohn.
\end{corollary}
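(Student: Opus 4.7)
The plan is to chain together the three ingredients named in the lead-in: Theorem \ref{t:Cp-Ascoli}, which establishes the equivalence of Ascoli, sequentially Ascoli and $\kappa$-Fr\'echet--Urysohn for spaces of the form $C_p(X)$, together with parts (iv) and (v) of Proposition \ref{p:wOCA-k-space}, which place the class of open-compact attainable spaces between $\kappa$-Fr\'echet--Urysohn spaces and Ascoli spaces. Nothing new has to be proved; both implications are obtained by tracing the diagram.

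For the easy direction, I would assume that $C_p(X)$ is $\kappa$-Fr\'echet--Urysohn and invoke Proposition \ref{p:wOCA-k-space}(iv) directly to conclude that $C_p(X)$ is open-compact attainable. This does not use any property specific to function spaces.

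For the converse, I would assume that $C_p(X)$ is open-compact attainable. By Proposition \ref{p:wOCA-k-space}(v) every open-compact attainable space is Ascoli, so in particular $C_p(X)$ is Ascoli. Now Theorem \ref{t:Cp-Ascoli} specializes to function spaces of the pointwise type and yields that $C_p(X)$ is $\kappa$-Fr\'echet--Urysohn, as required.

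There is no real obstacle, since the corollary is a formal consequence of previously stated results. The only minor point worth flagging in the write-up is that the equivalence with the intermediate properties (Ascoli, sequentially Ascoli) is genuinely needed for the forward direction; without Theorem \ref{t:Cp-Ascoli}, the gap between \emph{Ascoli} and $\kappa$-Fr\'echet--Urysohn cannot be bridged in the generality of all Tychonoff $X$. Thus the statement should be presented as a direct corollary, essentially a one-line deduction in each direction.
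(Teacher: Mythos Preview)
Your argument is correct and matches the paper's own proof exactly: the corollary is deduced immediately from Theorem~\ref{t:Cp-Ascoli} together with parts (iv) and (v) of Proposition~\ref{p:wOCA-k-space}, with one implication coming from (iv) and the other from (v) followed by Theorem~\ref{t:Cp-Ascoli}.
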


Denote by $B_w$ the closed unit ball of a Banach space $E$ endowed with the weak topology.
\begin{corollary} \label{c:unit-ball-oca}
Let $B$ be the closed unit ball of a Banach space $E$. Then $B_w$ is $\kappa$-sequential and the following assertions are equivalent:
\begin{enumerate}
\item[{\rm(i)}] $B_w$ is open-compact attainable;
\item[{\rm(ii)}] $B_w$ is  Fr\'{e}chet--Urysohn;
\item[{\rm(iii)}] $E$ has no an isomorphic copy of $\ell_1$.
\end{enumerate}
\end{corollary}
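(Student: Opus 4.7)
The $\kappa$-sequentiality of $B_w$ is immediate from the structural properties already established: $B$ is convex and the weak topology is a vector space topology on $E$, so for any two points $x,y\in B$ the affine path $t\mapsto tx+(1-t)y$ is weakly continuous and stays in $B$. Hence $B_w$ is path-connected, and Proposition \ref{p:pass-connected-k-sequential} delivers $\kappa$-sequentiality.

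For the equivalence (i)--(iii), I would close a triangle (ii) $\Rightarrow$ (i) $\Rightarrow$ (iii) $\Rightarrow$ (ii). The step (ii) $\Rightarrow$ (i) is a diagram chase: Fr\'{e}chet--Urysohn implies $\kappa$-Fr\'{e}chet--Urysohn, which by Proposition \ref{p:wOCA-k-space}(iv) implies open-compact attainable. For (i) $\Rightarrow$ (iii), Proposition \ref{p:wOCA-k-space}(v) first drops open-compact attainability to the Ascoli property, after which I would invoke the known characterization from \cite{Gabr-B1} that $B_w$ is Ascoli if and only if $E$ contains no isomorphic copy of $\ell_1$. Finally, (iii) $\Rightarrow$ (ii) is a classical Banach-space theorem: when $\ell_1\not\hookrightarrow E$, Rosenthal's $\ell_1$-theorem, together with the Eberlein--\v{S}mulian angelicity of $B_w$, promotes weak-closure membership to sequential weak convergence and gives the Fr\'{e}chet--Urysohn property.

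A slightly more internal reformulation uses Proposition \ref{p:OCA-kappaFU}(i): since $B_w$ is angelic by Eberlein--\v{S}mulian, every compact subset of $B_w$ is Fr\'{e}chet--Urysohn, so on $B_w$ the class of open-compact attainable spaces coincides with the class of $\kappa$-Fr\'{e}chet--Urysohn spaces. The entire content of the corollary thus reduces to showing that, for $B_w$, the properties of being $\kappa$-Fr\'{e}chet--Urysohn, being Fr\'{e}chet--Urysohn, and the absence of an $\ell_1$-copy in $E$ are mutually equivalent. I expect the main obstacle to be the direction ``$B_w$ is $\kappa$-Fr\'{e}chet--Urysohn $\Rightarrow \ell_1\not\hookrightarrow E$'': given an $\ell_1$-basic sequence $(e_n)\subseteq B$, one must exhibit a weakly \emph{open} set $U\subseteq B_w$ and a point $z\in\overline{U}^w\setminus U$ that cannot be reached by any sequence from $U$, rather than merely a countable set (where Schur-type phenomena already obstruct Fr\'{e}chet--Urysohness but not its $\kappa$-version). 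The natural candidate is an open slice $U=\{x\in B : \ell(x)>c\}$ cut out by a continuous linear functional, with the required obstruction being extracted from the Bourgain--Fremlin--Talagrand analysis of $\ell_1$-sequences.
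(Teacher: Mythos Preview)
Your argument is correct and matches the paper's almost exactly: path-connectedness of $B_w$ plus Proposition~\ref{p:pass-connected-k-sequential} gives $\kappa$-sequentiality, and the equivalence (i)$\Leftrightarrow$(ii)$\Leftrightarrow$(iii) is obtained by sandwiching open-compact attainability between Fr\'echet--Urysohn and Ascoli (via Proposition~\ref{p:wOCA-k-space}(iv),(v)) and then citing the known equivalence ``$B_w$ Ascoli $\Leftrightarrow$ $B_w$ Fr\'echet--Urysohn $\Leftrightarrow$ $\ell_1\not\hookrightarrow E$''. The only correction is bibliographic: that equivalence is Theorem~1.9 of \cite{GKP}, not \cite{Gabr-B1}.

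Your final paragraph, however, is an unnecessary detour. Once the result from \cite{GKP} is invoked, there is no ``main obstacle'' left: the implication ``$\kappa$-FU $\Rightarrow$ $\ell_1\not\hookrightarrow E$'' factors trivially through $\kappa$-FU $\Rightarrow$ open-compact attainable $\Rightarrow$ Ascoli $\Rightarrow$ $\ell_1\not\hookrightarrow E$, so no direct construction with slices or Bourgain--Fremlin--Talagrand machinery is needed. You appear to be contemplating a from-scratch proof of something you have already (correctly) proposed to cite; drop that paragraph and the proof is complete.
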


\begin{proof}
Since $B_w$ is pass-connected, it is $\kappa$-sequential by Proposition \ref{p:pass-connected-k-sequential}.

Theorem 1.9 of \cite{GKP} states that the condition ``$B_w$ is an Ascoli space'' is equivalent to the conditions (ii) and (iii). Now the equivalences (i)$\LRa$(ii)$\LRa$(iii) immediately follow from the fact that the open-compact attainability lies between Fr\'{e}chet--Urysohness and the property of being an Ascoli space.\qed
\end{proof}

Let $X$ be a topological space. A family  $\{ A_i\}_{i\in I}$ of subsets of  $X$ is called {\em compact-finite} if for every compact $K\subseteq X$, the set $\{ i\in I: K\cap A_i\}$ is finite; and $\{ A_i\}_{i\in I}$ is called {\em locally finite} if for every point $x\in X$ there is a neighborhood $X$ of $x$ such that the set $\{ i\in I: U\cap A_i\}$ is finite. A family  $\{ B_i\}_{i\in I}$ of subsets of  $X$ is called {\em strongly} ({\em functionally}) {\em compact-finite} if for every $i\in I$, there is an (functionally) open neighborhood  $U_i$ of $B_i$ such that the family $\{ U_i\}_{i\in I}$ is compact-finite.

In Proposition \ref{p:pass-connected-k-sequential} we considered pass-connected spaces. In the opposite case when $X$ is a zero-dimensional space we have the following assertion.

\begin{proposition} \label{p:zpwoca-dim-Ascoli}
Let $X$ be a strongly zero-dimensional space. If $X$ is weakly open-compact attainable, then $X$ is a sequentially Ascoli space.
\end{proposition}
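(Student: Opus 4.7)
The plan is to prove the contrapositive: if $X$ is strongly zero-dimensional and not sequentially Ascoli, then $X$ is not weakly open-compact attainable. The strategy is to convert a non-equicontinuous convergent sequence in $\CC(X)$ into a compact-finite family of nonempty clopen sets whose union $W$ is non-closed and whose intersection with any compact set remains closed, which then precludes the attainment condition.

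First, I would fix a sequence $(f_n)\subseteq C(X)$ converging in $\CC(X)$ to some $f$ but failing to be equicontinuous at some point $z\in X$. Replacing $f_n$ by $f_n-f$, I may assume $f=0$. Extract $\e>0$ witnessing the failure: every neighborhood $V$ of $z$ contains a point $y$ with $|f_n(y)|\geq\e$ for some $n$. Since a finite union of equicontinuous families is equicontinuous, every tail of $(f_n)$ is still non-equicontinuous at $z$; and since $f_n(z)\to 0$, passing to a tail I may also assume $|f_n(z)|<\e/2$ for all $n$. Consequently, for each neighborhood $V$ of $z$ and each $N\in\NN$ there exist $n\geq N$ and $y\in V$ with $|f_n(y)|\geq\e$.

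Next I invoke strong zero-dimensionality. Set $U_n=\{x:|f_n(x)|>\e\}$ and $V_n=\{x:|f_n(x)|>\e/2\}$. The sets $\{|f_n|\geq\e\}$ and $\{|f_n|\leq\e/2\}$ are disjoint zero-sets, so the hypothesis $\dim X=0$ yields a clopen set $C_n$ with
\[
\{|f_n|\geq\e\}\subseteq C_n\subseteq V_n.
\]
Discarding indices where $U_n=\es$ (the remaining ones are infinite, by the tail property above), I may assume every $C_n$ is nonempty. The family $(V_n)$ is compact-finite because $f_n\to 0$ uniformly on any compact set, and $C_n\subseteq V_n$ inherits this. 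The tail non-equicontinuity forces $z\in\overline{\bigcup_n C_n}$, while $|f_n(z)|<\e/2$ puts $z$ outside every $V_n$, hence outside every $C_n$.

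Finally, set $W:=\bigcup_n C_n$. Then $W$ is open and $z\in\overline{W}\SM W$, so $W$ is not closed. For any compact $K\subseteq X$, compact-finiteness reduces $K\cap W$ to a finite union $\bigcup_{i=1}^m(K\cap C_{n_i})$, each summand clopen in $K$ because $C_{n_i}$ is clopen in $X$. Thus $K\cap W$ is closed in $K$, hence compact, hence closed in $X$, so $\overline{K\cap W}=K\cap W\subseteq W$. Consequently, for every $z'\in\overline{W}\SM W$ and every compact $K\subseteq X$ one has $z'\notin\overline{K\cap W}$, and in particular $z'\notin K\cap\overline{K\cap W}$. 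This shows $X$ is not weakly open-compact attainable, completing the contrapositive.

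The main obstacle is the refinement step: starting from the cozero sets $V_n$ coming from a non-equicontinuous convergent sequence, one must produce a compact-finite family of nonempty \emph{clopen} sets that still accumulates at $z$ while avoiding $z$. This is precisely where strong zero-dimensionality is used, via separation of disjoint zero-sets by clopen sets; once the clopen family $(C_n)$ is in place, the closedness of $K\cap W$ in $X$ is automatic and the rest of the argument is mechanical.
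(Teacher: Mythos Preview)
Your argument is correct. The only imprecision is in the sentence ``Extract $\e>0$ witnessing the failure: every neighborhood $V$ of $z$ contains a point $y$ with $|f_n(y)|\geq\e$'': the negation of equicontinuity at $z$ gives $|f_n(y)-f_n(z)|\geq\e$, not $|f_n(y)|\geq\e$. After passing to the tail with $|f_n(z)|<\e/2$ you only get $|f_n(y)|>\e/2$, so the levels in the definitions of $U_n,V_n,C_n$ should be halved. This changes nothing in the structure of the proof.

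Your route differs from the paper's in its starting point. The paper argues directly, invoking the characterization from \cite{Gabr-seq-Ascoli} that $X$ is sequentially Ascoli iff every strongly functionally compact-finite sequence of functionally closed sets is locally finite; it then refines the witnessing functionally open sets to clopen sets via strong zero-dimensionality and uses weak open-compact attainability to produce a compact set meeting infinitely many of them, contradicting compact-finiteness. You instead work the contrapositive straight from the definition: a non-equicontinuous convergent sequence in $\CC(X)$ is unpacked by hand into a compact-finite family of cozero sets, which you refine to clopen sets $C_n$; their union $W$ is then an explicit open non-closed set with $K\cap W$ closed for every compact $K$. The core mechanism---strong zero-dimensionality turning a compact-finite cozero family into a compact-finite clopen family whose union witnesses the failure of weak open-compact attainability---is the same in both arguments. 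Your version is more self-contained since it avoids the external characterization theorem, while the paper's version isolates that characterization as a reusable tool and keeps the present proof shorter.
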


\begin{proof}
Let $\{F_n\}_{n\in\w}$ be a strongly functionally compact-finite sequence of functionally closed subsets of $X$. To show that $X$ is sequentially Ascoli, by Theorem 2.7 of \cite{Gabr-seq-Ascoli}, we have  to prove that  $\{F_n\}_{n\in\w}$  is locally finite.

Let $\{U_n\}_{n\in\w}$  be a compact-finite sequence of functionally open subsets of $X$ such that $F_n\subseteq U_n$ for every $n\in\w$.
Since $X$ is strongly zero-dimensional, for every $n\in\w$, by Lemma 6.2.2 of \cite{Eng},  there exists a clopen $W_n\subseteq X$ such that $F_n\subseteq W_n \subseteq  U_n$. Clearly,  the sequence $\{W_n\}_{n\in\w}$  is also compact-finite. Set $W:=\bigcup_{n\in\w} W_n$.

Let us show that even the sequence $\{W_n\}_{n\in\w}$  is locally finite. Suppose for a contradiction that $\{W_n\}_{n\in\w}$  has accumulation points. Let $A$ be the set of all  accumulation points of $\{W_n\}_{n\in\w}$. By definition, $A$ is also the set of accumulation points of $\{W_n\}_{n> m}$ for every $m\in\w$. Observe also that $\overline{W}\SM W\subseteq A$. Indeed, if  $x\in \overline{W}\SM W$ and the set $I_x:=\{n\in\w: \mathcal{O}\cap W_n\not=\emptyset\}$ is finite for some clopen neighborhood $\mathcal{O}$ of $x$, then $\mathcal{O}\cap \overline{\bigcup_{n\in \w\SM I_x} W_n}=\emptyset$ and hence $x\in \overline{\bigcup_{n\in I_x} W_n}=\bigcup_{n\in I_x} W_n\subseteq W$ which is impossible. On the other hand, since $\{W_n\}_{n\in\w}$  is compact-finite, for every $a\in A$, there is $m_a\in\w$ such that $a\not\in \bigcup_{n> m_a} W_n$ and $a$ is an accumulation point of $\{W_n\}_{n> m_a}$. Therefore, passing to a subsequence of the form  $\{W_n\}_{n\geq m}$ if needed, we can assume that $\overline{W}\SM W\not=\emptyset$ and $\overline{W}\SM W\subseteq A$.

Since $X$ is weakly open-compact attainable, there are a point $x\in \overline{W}\SM W$  and a compact subset $K$ of $X$ such that $x\in K\cap \overline{K\cap W}$. Set $J:=\{n\in \w: W_n\cap K\neq\emptyset\}$. If $J$ is finite, then (recall that all $W_n$ are clopen)
\[
\overline{K\cap W}=\overline{K\cap \bigcup_{n\in J} W_n}=  \bigcup_{n\in J} K\cap W_n \subseteq W
\]
and hence $x\in W$. This contradiction shows that $J$ must be infinite. But in this case we obtain that the sequence  $\{W_n\}_{n\in\w}$  is not compact-finite, a contradiction.\qed
\end{proof}

Under addition assumption that $X$ is a perfectly normal space we can reverse the conclusion in Proposition \ref{p:zpwoca-dim-Ascoli}.
Recall that a space $X$ is called {\em perfectly normal} if for every two disjoint nonempty closed sets $E$ and $F$, there is a continuous function $f:X\to[0,1]$ such that $f^{-1}(0)=E$ and $f^{-1}(1)=F$.

\begin{proposition} \label{p:zpwoca-dim-Ascoli-perfectly-normal}
Let $X$ be a strongly zero-dimensional perfectly normal space. Then $X$ is weakly open-compact attainable if, and only if, $X$ is a sequentially Ascoli space.
\end{proposition}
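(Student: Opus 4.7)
The forward implication is already given by Proposition \ref{p:zpwoca-dim-Ascoli} (which did not even use perfect normality), so the plan is to prove only the converse: a strongly zero-dimensional perfectly normal sequentially Ascoli space is weakly open-compact attainable. I would argue by contraposition, assuming $X$ is not weakly open-compact attainable and constructing a strongly functionally compact-finite sequence of functionally closed sets that fails to be locally finite; by Theorem 2.7 of \cite{Gabr-seq-Ascoli} (the same criterion used in Proposition \ref{p:zpwoca-dim-Ascoli}) this will contradict the sequentially Ascoli property.

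Concretely, I would fix a non-closed open $W\subseteq X$ witnessing the failure of weak open-compact attainability, so that no $z\in\overline{W}\SM W$ satisfies $z\in K\cap\overline{K\cap W}$ for any compact $K$. Using perfect normality I would write $W=\bigcup_{n\in\w}F_n$ as an increasing union of closed sets, then apply strong zero-dimensionality (Lemma 6.2.2 of \cite{Eng}) to thicken each $F_n$ to a clopen $W_n\subseteq W$, and replace $W_n$ by $\bigcup_{k\leq n}W_k$ so that the sequence is nested. Setting $V_0:=W_0$ and $V_n:=W_n\SM W_{n-1}$ for $n\geq 1$ produces a countable clopen, pairwise disjoint partition $\{V_n\}_{n\in\w}$ of $W$ by functionally closed sets.

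The core step is to verify that $\{V_n\}_{n\in\w}$ is compact-finite, which is precisely where the failure of weak open-compact attainability will be used. If some compact $K\subseteq X$ met infinitely many $V_n$, I would pick $x_n\in K\cap V_n$ along an infinite index set and extract a cluster point $z\in K$; such $z$ automatically lies in $\overline{K\cap W}$. The disjointness and clopenness of the $V_n$ rule out $z\in W$, so $z\in\overline{W}\SM W$ and $z\in K\cap\overline{K\cap W}$, contradicting the choice of $W$. Strong functional compact-finiteness is then witnessed by taking $U_n:=V_n$ itself. Non-local-finiteness at any $z_0\in\overline{W}\SM W$ is immediate: if some neighborhood of $z_0$ met only finitely many $V_n$, their union would be a clopen subset of $W$ meeting every neighborhood of $z_0$, forcing $z_0\in W$.

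The principal obstacle is the compact-finiteness step, since this is the only place the hypothesis on $W$ is leveraged; the construction of the clopen partition is routine once perfect normality delivers the $F_\sigma$-decomposition of $W$ and strong zero-dimensionality delivers the clopen thickenings. It is worth noting that both hypotheses are genuinely used — perfect normality to obtain a \emph{countable} clopen partition, on which the cluster-point argument relies, and strong zero-dimensionality to make the pieces clopen so that the non-local-finiteness conclusion traps $z_0$ inside $W$.
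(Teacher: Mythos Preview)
Your proposal is correct and takes essentially the same approach as the paper: both build a countable clopen partition of the witnessing non-closed open set $W$ (via perfect normality and strong zero-dimensionality), establish compact-finiteness of this partition from the failure hypothesis on $W$, and appeal to Theorem~2.7 of \cite{Gabr-seq-Ascoli}. The paper's version is marginally more streamlined in that it first observes $W$ is $k$-closed (so each $K\cap W$ is compact and hence covered by finitely many partition members), and then concludes the partition is locally finite, forcing $W$ to be clopen---which is precisely your non-local-finiteness step read contrapositively.
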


\begin{proof}
The necessity follows from Proposition \ref{p:zpwoca-dim-Ascoli}. To prove the sufficiency, suppose for a contradiction that $X$ is not weakly open-compact attainable. Then there is a non-closed open subset $W$ of $X$ such that for any point $z\in \overline{W}\SM W$ and each compact subset $K$ of $X$, we have $z\not\in K\cap \overline{K\cap W}$. In particular, $K\cap W$ is closed in $K$ for every compact set $K\subseteq X$, that is $W$ is $k$-closed.

Since $X$ is perfectly normal, $W$ is functionally open. As $X$ is strongly zero-dimensional, there exists a clopen partition $\{W_n\}_{n\in\w}$ of $W$.
Since $W$ is $k$-closed, the sequence $\{W_n\}_{n\in\w}$ is compact-finite. By assumption, $X$ is sequentially Ascoli and therefore, by Theorem 2.7 of \cite{Gabr-seq-Ascoli}, the sequence  $\{W_n\}_{n\in\w}$  is locally finite. Hence $W=\bigcup_{n\in\w} W_n$ is clopen that contradicts the choice of $W$.
Thus $X$ is a weakly open-compact attainable space.\qed
\end{proof}

\begin{corollary} \label{c:counable-X-woca-Seq-Ascoli}
Let $X$ be a countable space. Then:
\begin{enumerate}
\item[{\rm(i)}] $X$ is $\kappa$-sequential if, and only if, $X$ is weakly open-compact attainable if, and only if,  $X$ is a sequentially Ascoli space;
\item[{\rm(ii)}] $X$ is $\kappa$-Fr\'{e}chet--Urysohn if, and only if, it is open-compact attainable.
\end{enumerate}
\end{corollary}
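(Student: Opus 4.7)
The plan is to reduce both parts of the corollary to earlier results in this section by verifying that their hypotheses are automatic for any countable Tychonoff space $X$. Specifically, I would first establish three facts about $X$: (a) $X$ is strongly zero-dimensional; (b) $X$ is perfectly normal; (c) every compact subset of $X$ is Fr\'echet--Urysohn.

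For (a), since $X$ is countable and regular it is Lindel\"of, hence normal. Given a point $x\in X$ and a closed set $F\subseteq X$ with $x\notin F$, the Tychonoff property yields a continuous $f:X\to[0,1]$ with $f(x)=0$ and $f(F)=\{1\}$; since $f(X)$ is countable, there exists $c\in(0,1)\SM f(X)$, and then $f^{-1}([0,c))$ is a clopen neighborhood of $x$ disjoint from $F$. Thus $X$ is zero-dimensional, and a zero-dimensional Lindel\"of space is strongly zero-dimensional by \cite{Eng}. For (b), in a countable $T_1$ space every open set is a countable union of singletons, hence $F_\sigma$, which together with normality gives perfect normality. For (c), a countable compact Hausdorff space is metrizable (for instance, by the Mazurkiewicz--Sierpi\'nski classification of countable compact Hausdorff spaces as successor ordinals below $\w_1$), hence Fr\'echet--Urysohn.

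With (a)--(c) in hand, part (i) follows by combining Proposition \ref{p:zpwoca-dim-Ascoli-perfectly-normal} (which applies via (a) and (b), giving the equivalence of weak open-compact attainability and sequential Ascoli-ness) with Proposition \ref{p:OCA-kappaFU}(ii) (which applies via (c), giving the equivalence of weak open-compact attainability and $\kappa$-sequentiality). Part (ii) is the combination of Proposition \ref{p:wOCA-k-space}(iv) ($\kappa$-Fr\'echet--Urysohn $\Rightarrow$ open-compact attainable, which holds for arbitrary $X$) with the converse direction in Proposition \ref{p:OCA-kappaFU}(i), which applies via (c).

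I do not anticipate a real obstacle here: the corollary essentially packages the observation that countability supplies precisely the hypotheses needed by the previous propositions. The only mildly delicate ingredient is item (a), where I would quote the Engelking fact that zero-dimensional Lindel\"of spaces are strongly zero-dimensional rather than reprove it.
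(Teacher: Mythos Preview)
Your proposal is correct and follows essentially the same route as the paper: the paper's proof also establishes that a countable Tychonoff space is strongly zero-dimensional, perfectly normal, and has metrizable (hence Fr\'echet--Urysohn) compact subsets, and then invokes Proposition~\ref{p:zpwoca-dim-Ascoli-perfectly-normal} together with Proposition~\ref{p:OCA-kappaFU}. The only differences are cosmetic---the paper cites Engelking directly for strong zero-dimensionality and metrizability of countable compacta, whereas you sketch these facts by hand.
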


\begin{proof}
Since $X$ is countable, it is strongly zero-dimensional by Corollary 6.2.8 of \cite{Eng}. Since $X$ is hereditarily Lindel\"{o}f, $X$ is perfectly normal.
The countability of $X$ implies that any compact subset is even metrizable, see Theorem 3.1.21 of \cite{Eng}. Now the corollary follows from (ii) of Proposition \ref{p:OCA-kappaFU} and Proposition \ref{p:zpwoca-dim-Ascoli-perfectly-normal}.\qed
\end{proof}

\begin{proposition} \label{p:zpwoca-Ascoli}
Let $X$ be a zero-dimensional  pseudocompact space. If $X$ is weakly open-compact attainable, then $X$ is an Ascoli space.
\end{proposition}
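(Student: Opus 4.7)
The plan is to adapt the proof of Proposition \ref{p:zpwoca-dim-Ascoli} to this setting, with pseudocompactness compensating for the weakening from strong to plain zero-dimensionality. I will invoke the family analogue of Theorem 2.7 of \cite{Gabr-seq-Ascoli} characterizing Ascoli spaces via: every strongly functionally compact-finite family of non-empty functionally closed sets is locally finite. Given such a family $\{F_i\}_{i\in I}$ witnessed by a compact-finite family $\{U_i\}_{i\in I}$ of functionally open neighborhoods with $F_i\subseteq U_i$, it will suffice to rule out the existence of an infinite countable subfamily $\{U_n\}_{n\in\w}$ of non-empty open sets; since pseudocompactness already forces every locally finite family of non-empty open sets to be finite, this will give the Ascoli conclusion.

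Assume such an infinite subfamily exists. For each $n$, I will pick $x_n\in F_n\subseteq U_n$ and, by zero-dimensionality, a clopen neighborhood $V_n$ of $x_n$ contained in $U_n$; the family $\{V_n\}_{n\in\w}$ inherits compact-finiteness from $\{U_n\}$ because $V_n\subseteq U_n$. By pseudocompactness, no infinite family of non-empty open sets is locally finite, so some $y\in X$ has every neighborhood meeting $V_n$ for infinitely many $n$; applying compact-finiteness to $\{y\}$, the point $y$ belongs to only finitely many $V_n$, and after discarding these I may assume $y\notin V_n$ for all $n$. Setting $W:=\bigcup_{n\in\w}V_n$, this gives $y\in\overline{W}\SM W$.

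Weak open-compact attainability then furnishes $z\in\overline{W}\SM W$ and a compact set $K\subseteq X$ with $z\in K\cap\overline{K\cap W}$. Since $K$ meets only finitely many $V_n$, say $V_{n_1},\ldots,V_{n_k}$, and each $V_{n_j}$ is clopen in $X$, the set $K\cap W=\bigcup_{j=1}^{k}(K\cap V_{n_j})$ is clopen in $K$, hence closed; therefore $z\in\overline{K\cap W}=K\cap W\subseteq W$, contradicting $z\notin W$.

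The principal obstacle is the weakness of zero-dimensionality: in Proposition \ref{p:zpwoca-dim-Ascoli}, strong zero-dimensionality (Lemma 6.2.2 of \cite{Eng}) was used to enclose each $F_n$ in a clopen subset of $U_n$, an operation not available under plain zero-dimensionality. My fix is to settle for a clopen neighborhood of only a single point $x_n\in F_n$; this is enough because the final contradiction uses solely the clopenness of $V_n$ (and not the containment $F_n\subseteq V_n$) to conclude that $K\cap W$ is closed in $K$. Pseudocompactness plays the complementary role of manufacturing the boundary point of $W$ that weak open-compact attainability then upgrades to the compact witness $K$.
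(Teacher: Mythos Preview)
Your argument is correct, and the core contradiction coincides with the paper's: shrink to clopen $V_n\subseteq U_n$ via zero-dimensionality, use pseudocompactness to produce a point of $\overline{W}\setminus W$ for $W=\bigcup_n V_n$, and then invoke weak open-compact attainability to obtain a compact $K$ with some $z\in\overline{K\cap W}\setminus W$, which is impossible because $K\cap W$ is a finite union of sets clopen in $K$.

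The only genuine difference is the external Ascoli criterion invoked. The paper does not go through the family analogue of Theorem~2.7 of \cite{Gabr-seq-Ascoli}; instead it applies Theorem~1.2 of \cite{Gabr-pseudo-Ascoli}, a characterization specific to \emph{pseudocompact} spaces: such an $X$ is Ascoli iff every disjoint sequence $(U_n)_{n\in\w}$ of non-empty open sets admits a compact set meeting infinitely many $U_n$. This criterion already hands one a countable disjoint sequence, so the paper needs neither the passage from an arbitrary family to a countable subfamily nor the explicit construction of an accumulation point outside $W$; it simply observes that the compact $K$ produced by weak open-compact attainability must hit infinitely many $V_n$ (else $K\cap W$ would be closed), hence infinitely many $U_n$. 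Your approach is a legitimate alternative and arguably more self-contained in its endgame, but you should make sure the ``family analogue'' you invoke is actually recorded in \cite{Gabr-seq-Ascoli} under a citable number; the paper's route avoids that dependency by leaning on the pseudocompact-specific criterion instead.
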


\begin{proof}
Let $(U_n)_{n\in\w}$ be a disjoint sequence of nonempty open sets. As $X$ is zero-dimensional, for every $n\in\w$, there is a clopen set $V_n$ such that $V_n\subseteq U_n$. Set $V:=\bigcup_{n\in\w} V_n$. Since $X$ is pseudocompact, the set $\overline{V}\setminus V$ is not empty. As all $V_n$ are clopen,
we can repeat the corresponding part of the proof of Proposition \ref{p:zpwoca-dim-Ascoli} to show that the set $\overline{V}\setminus V$ consists of accumulation points of the sequence $(V_n)_{n\in\w}$. As $X$ is weakly open-compact attainable, there exists a compact set that intersects infinitely many members of the sequence $(V_n)_{n\in\w}$ and hence of $(U_n)_{n\in\w}$. Thus, by Theorem 1.2 of \cite{Gabr-pseudo-Ascoli}, $X$ is an Ascoli space.\qed
\end{proof}

Zero-dimensionality in Propositions \ref{p:zpwoca-dim-Ascoli} and  \ref{p:zpwoca-Ascoli}  is essential even in the class of pseudocompact spaces (which are not pass-connected) as the following example shows.


\begin{example} \label{exa:woca-non-Ascoli}
Let $S$ and $S_1$ be the $\Sigma$-products in $[0,1]^{\w_1}$ around the zero function $\mathbf{0}$ and the unit function $\mathbf{1}$, respectively. Set
\[
X: =\big(\w \times S\big)\cup \Big( \{\w\}\times  (\{\mathbf{0}\}\cup S_1)\Big)\subseteq (\w+1)\times [0,1]^{\w_1}.
\]
Then $X$ is a $\kappa$-sequential pseudocompact space which is not sequentially Ascoli.
\end{example}

\begin{proof}
The proof is separated onto the following three claims.
\smallskip

{\em Claim 1. The space $X$ is pseudocompact.} Indeed, let us remark first that each open nonempty set $U$ contains an open set of the form $U_n:=\big(\{n\}\times [0,1]^{\w_1}\big)\cap X$ for some $n\in\w$. Now, let $(\mathcal{O}_n)_n$ be a sequence of open nonempty sets in $X$. By the remark we can assume that $\mathcal{O}_n=\{m_n\}\times V_n$, where $V_n$ is an open subset of $S$. If the sequence $(m_n)_n$ is bounded by $N$, then the sequence $(\mathcal{O}_n)_n$ lies in the countably compact space $\{0,1,...,N\}\times S$ and, therefore, it has a limit point. If $(m_n)_n$ is unbounded, then we can assume that $(m_n)_n$ is a strictly increasing sequence of integers.

We show that the sequence $(V_n)_n$ accumulates at some point $x\in [0,1]^{\w_1}\setminus S$. For every $n\in\w$, there exists a standard open set $W_n=\prod_{\alpha<\w_1} W_{n,\alpha}$ in $[0,1]^{\w_1}$  such that $W_n\cap S\subseteq V_n$, where $W_{n,\alpha}$ is an open subset of $[0,1]$ and the set $A_n=\{\alpha<\w_1: W_{n,\alpha}\neq[0,1]\}$ is finite and nonempty. Then the set $A=\bigcup_{n<\w} A_n$ is at most countable. Let $Z$ be the set of all accumulation points of the sequence $(W_n)_n$ in $[0,1]^{\w_1}$. Then $Z=F\times [0,1]^{\w_1\setminus A}$, where $F$ is some closed nonempty subset of $[0,1]^A$.
Therefore, the set $Z\setminus S$ is not empty. Let $x\in Z\setminus S$.
Then the sequence $(V_n)_n$ accumulates at $x\in [0,1]^{\w_1}\setminus S$.

Since $m_n\to\infty$ and $(V_n)_n$ accumulates at $x\in [0,1]^{\w_1}\setminus S$, the sequence $(\mathcal{O}_n)_n$ accumulates at the point $(\w,x)\in X$. Thus $X$  is pseudocompact and Claim 1 is proved.
\smallskip

{\em Claim 2. The space $X$ is $\kappa$-sequential.} Indeed, let $U$ be an open set in $X$ such that $\overline{U}\setminus U\neq\emptyset$.
Set
\[
A:=\big\{n\in\w: U\cap \big(\{n\}\times  S\big)\neq\emptyset\big\}.
\]
Consider the following three possible cases.

{\em Case 1. There exists $n\in A$ such that $\big(\{n\}\times  S\big)\cap U$ is not closed in $\{n\}\times  S$.} Since $S$ is a Fr\'{e}chet--Urysohn space (see Exercise 3.10.D of \cite{Eng}), there exists a sequence $(x_n)_n\subseteq U\cap \big(\{n\}\times  S\big)$ converging to some point in $\big(\{n\}\times  S\big)\setminus U$.

{\em Case 2. For every $n\in A$, the set $\big(\{n\}\times  S\big)\cap U$ is closed in $\{n\}\times  S$ and $\w\SM A$ is infinite.} Taking into account that $U$ is non-closed and  $\{n\}\times  S$ is closed in $X$, the set $A$ must be also infinite. Moreover, since $S$ is pass-connected and $U$ is open, it follows that $U\cap\big(\w\times S\big)=A\times S$. Since $A$ and $\w\SM A$ are infinite, it follows that $(\w,\mathbf{0})\in X\setminus U$. It remains to note that the sequence $\{(n,\mathbf{0})\}_{n\in A} \subseteq A\times S \subseteq U$ converges to $(\w,\mathbf{0})$.
\smallskip

{\em Case 3. For every $n\in A$, the set $\big(\{n\}\times  S\big)\cap U$ is closed in $\{n\}\times  S$ and $\w\SM A$ is finite.} Since $S$ is pass-connected  and $U$ is open, the closeness of all $\big(\{n\}\times  S\big)\cap U$ implies that $U\cap\big(\w\times S\big)=A\times S$. Set
\[
F:=\big\{x\in \{\mathbf{0}\}\cup S_1: (\w,x)\notin U\big\},
\]
so that $F$ is a closed subset of $\{\mathbf{0}\}\cup S_1$. Since $\overline{U}\SM U\not=\emptyset$ and $\w\SM A$ is finite, we have $F\not=\emptyset$ (otherwise, $U=\big(A\times S\big) \cup \big( \{\w\}\times  (\{\mathbf{0}\}\cup S_1)\big)$ is closed in $X$). If $\mathbf{0}\in F$, then the sequence $\{(n,\mathbf{0})\}_{n\in A} \subseteq A\times S \subseteq U$ converges to $(\w,\mathbf{0})\in X\SM U$.

Assume that $\mathbf{0}\not\in F$. Then $F\subseteq S_1$. Since $S_1$ is dense in $\{\mathbf{0}\}\cup S_1$ and $F$ is a closed subset of $\{\mathbf{0}\}\cup S_1$, we have $S_1\setminus F\neq\emptyset$. As above, since $S_1$ is a pass-connected Fr\'{e}chet--Urysohn space (Exercise 3.10.D of \cite{Eng}), there exists a sequence $(x_n)_n\subseteq S_1\setminus F$ converging to some point $x\in F$. Then the sequence $\{(\w,x_n)\}_{n\in\w}\subseteq U$ converges to the point $(\w,x)\in X\setminus U$.

Therefore, in all three cases there is a sequence in $U$ which converges to some point in $\overline{U}\SM U$. Thus $X$ is $\kappa$-sequential.
\smallskip

{\em Claim 3. The space $X$ is not sequentially Ascoli.} Let $W$ be a nonempty open set in $S$ such that $\mathbf{0}\notin \overline{W}$. For every $n\in\w$, set $U_n:=\{n\}\times W$. Let $K$ be a compact subset of $X$. Since $X$ is pseudocompact (Claim 1), to show that $X$ is not sequentially Ascoli, by Theorem 1.2 of \cite{Gabr-pseudo-Ascoli}, we show that the set $I:=\{n\in\w: K\cap U_n\not=\emptyset\}$ is finite. Suppose for a contradiction that $I$ is infinite. For every $n\in I$, choose $x_n\in W$ such that $(n,x_n)\in K\cap U_n$. Then each cluster point of the sequence $\{(n,x_n)\}_{n\in I}$ has the form $(\w,x)$ with $x\in \overline{W}\cap S$ (recall that $S$ is $\w$-bounded). Since $S\cap S_1=\emptyset$ and $\mathbf{0}\notin \overline{W}$,  we have  $(\w,x)\notin X$ that is impossible. This contradiction finishes the proof of Claim 3.\qed
\end{proof}

Let us remark the following assertion.

\begin{proposition} \label{p:1-non-isol-kappa-sequential}
A $\kappa$-sequential space  $X$ with only one non-isolated point is Fr\'{e}chet--Urysohn.
\end{proposition}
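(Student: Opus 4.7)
The plan is to reduce the Fréchet--Urysohn property to a single relevant case and apply $\kappa$-sequentiality directly. Let $x_0$ denote the unique non-isolated point of $X$. To show that $X$ is Fréchet--Urysohn, I need to verify that for every $A\subseteq X$ and every $a\in \overline{A}\SM A$, there is a sequence in $A$ converging to $a$. First I would observe that every point of $X\SM\{x_0\}$ is isolated, so $\{a\}$ would be open if $a\neq x_0$; then $a\in\overline{A}$ would force $a\in A$, a contradiction. Hence the only candidate limit point is $a=x_0$, and I only need to handle a set $A$ with $x_0\in\overline{A}\SM A$.

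Next I would use that such an $A$ is automatically open: since $A\subseteq X\SM\{x_0\}$, each point of $A$ is isolated in $X$, so $A=\bigcup_{a\in A}\{a\}$ is open. Moreover $A$ is not closed, because $x_0\in \overline{A}\SM A$. Thus $A$ is a non-closed open subset of $X$, and the hypothesis that $X$ is $\kappa$-sequential yields a point $z\in \overline{A}\SM A$ together with a sequence $\{x_n\}_{n\in\w}\subseteq A$ converging to $z$.

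Finally I would verify that $z=x_0$. Since $\{x_n\}\subseteq A$ and $z\notin A$, we have $x_n\neq z$ for every $n$, so the convergence $x_n\to z$ is non-trivial; in particular $z$ cannot be an isolated point of $X$ (otherwise $\{z\}$ would be a neighborhood of $z$ forcing $x_n=z$ eventually). As $x_0$ is the only non-isolated point, $z=x_0$, and the sequence $\{x_n\}\subseteq A$ converges to $x_0$, completing the argument. There is essentially no obstacle in this proof; the entire content is the observation that in a space with only one non-isolated point, the $\kappa$-sequential condition on open sets is the same as the Fréchet--Urysohn condition on arbitrary sets.
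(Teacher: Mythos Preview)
Your proof is correct and follows essentially the same route as the paper's: both observe that any non-closed $A$ must omit the unique non-isolated point $x_0$, hence $A$ is open as a union of isolated points, and then apply $\kappa$-sequentiality to produce a sequence in $A$ converging to some $z\in\overline{A}\setminus A$, which is forced to equal $x_0$. Your justification that $z$ is non-isolated (via the non-triviality of the convergence) is slightly more explicit than the paper's, but the argument is otherwise identical.
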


\begin{proof}
Let $x_\infty$ be the unique non-isolated point of $X$. For every non-closed subset $A$ of $X$ we have $\overline{A}\SM A=\{x_\infty\}$ (otherwise, we have $x_\infty\in A$, and then $X\SM A$ is open being a union of isolated points in $X$ and hence $A$ is closed, a contradiction). As  $x_\infty\not\in A$, $A$ is open as a union of isolated points. Since $X$ is $\kappa$-sequential, there is a sequence in $A$ converging to a point $y\in X\SM A$. Therefore $y$ is non-isolated, and hence $y=x_\infty$. Thus $X$ is a  Fr\'{e}chet--Urysohn space.\qed
\end{proof}


By definition, any $\kappa$-sequential non-discrete space contains non-trivial convergent sequences.
\begin{example} \label{exa:bN-kappa-seq}
Each compact space without non-trivial convergent sequences {\rm(}for example,  $\beta\NN${\rm)} is open-compact attainable which is  not a $\kappa$-sequential space.
\end{example}
But for the compact {\em groups} the situation changes considerably.

\begin{proposition} \label{p:compact-group-kFU}
Each compact group $G$ is $\kappa$-Fr\'{e}chet--Urysohn.
\end{proposition}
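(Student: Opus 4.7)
My plan is to derive this proposition from the more general fact (to be proved as Theorem \ref{t:feathered-group-kFU} in Section \ref{sec:LCG-LCA-Bohr}) that every feathered topological group is $\kappa$-Fr\'{e}chet--Urysohn. Recall that a topological group $G$ is called \emph{feathered} if it contains a non-empty compact subset of countable character in $G$. The only thing to verify is that every compact group $G$ is feathered.

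For this I would invoke the classical Peter--Weyl / Gleason--Yamabe approximation: every compact topological group $G$ admits a closed normal subgroup $N \lhd G$ such that the quotient $G/N$ is a compact Lie group, in particular metrizable and hence first countable. Let $(W_n)_{n\in\w}$ be a countable base of open neighborhoods of the identity coset $eN$ in $G/N$, and let $\pi \colon G \to G/N$ denote the quotient map. Since $\pi$ is continuous and open, the family $\bigl(\pi^{-1}(W_n)\bigr)_{n\in\w}$ is a countable base of open neighborhoods of the compact set $N$ in $G$, so $N$ has countable character in $G$. Hence $G$ is feathered, and Theorem \ref{t:feathered-group-kFU} yields the conclusion.

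The main obstacle is that Proposition \ref{p:compact-group-kFU} is stated at the end of Section \ref{sec:RE}, before Theorem \ref{t:feathered-group-kFU} is established; this requires either a forward reference or a direct argument. A direct proof would go as follows: by the homogeneity of topological groups it suffices to treat the identity $e$, so suppose $U \subseteq G$ is open with $e \in \overline{U}$, and seek a sequence $(u_n)_{n \in \w} \subseteq U$ converging to $e$. Writing $G$ as the inverse limit $\varprojlim G/N$ over the downward-directed family $\mathcal{N}$ of closed normal subgroups $N \lhd G$ with $G/N$ a compact Lie group, the openness of each quotient map $\pi_N$ yields $eN \in \overline{\pi_N(U)}$, and metrizability of $G/N$ produces, for each $N \in \mathcal{N}$, a sequence in $U$ whose $\pi_N$-image converges to $eN$ in $G/N$.

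The delicate point, and the real obstacle in a direct argument, is amalgamating these per-quotient sequences into a single sequence in $U$ converging to $e$ in $G$: the family $\mathcal{N}$ need not have countable cofinality, so a naive diagonalisation breaks down. The remedy is to exploit compactness of $G$ to extract cluster points of the family of candidate sequences, and then use the separation property $\bigcap_{N \in \mathcal{N}} N = \{e\}$ together with the fact that any cluster point of a net whose $\pi_N$-images converge to $eN$ for cofinally many $N$ must lie in $\bigcap_{N \in \mathcal{N}} N = \{e\}$; a careful countable exhaustion of neighborhoods of $e$ (which can be arranged after reducing to one sufficiently fine metrizable quotient) then provides the required sequence.
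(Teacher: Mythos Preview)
Your forward-reference approach via Theorem~\ref{t:feathered-group-kFU} is logically valid, but it differs from the paper's proof, which is a direct two-liner: by the Ivanovskij--Kuz'minov theorem every compact group is a dyadic compactum, and by Corollary~3.5 of \cite{LiL} every dyadic compactum is $\kappa$-Fr\'{e}chet--Urysohn. Note, incidentally, that the proof of Theorem~\ref{t:feathered-group-kFU} itself hinges on exactly this same dyadic lemma (applied to a compact $G_\delta$ subset), so your detour through feathered groups does not avoid the Liu--Ludwig ingredient; it merely postpones it. Also, your verification that a compact group is feathered is far more elaborate than necessary: the whole group $G$ is a compact subset whose only neighbourhood is $G$ itself, so it trivially has countable character---no Peter--Weyl or Gleason--Yamabe is needed.

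Your alternative ``direct'' sketch, on the other hand, has a genuine gap. The difficulty you identify---that $\mathcal{N}$ need not have countable cofinality---is real, and the remedy you propose does not work as stated. Reducing to ``one sufficiently fine metrizable quotient'' $G/N_0$ is exactly what cannot be done in general: the open set $U$ need not be $\pi_{N_0}$-saturated for any single $N_0\in\mathcal{N}$, so a sequence in $U$ whose $\pi_{N_0}$-images converge to $eN_0$ need not converge to $e$ in $G$. The cluster-point argument you outline concerns nets, not sequences, and produces only the trivial conclusion that $e\in\overline{U}$, which was the hypothesis. Turning this into an honest sequence requires precisely the kind of structural input (dyadicity, or Dugundji compactness of suitable $G_\delta$ subsets) that the paper invokes; your sketch does not supply a substitute.
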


\begin{proof}
By Ivanovskij--Kuz'minov's Theorem, $G$ is a dyadic space. Thus, by Corollary 3.5 of \cite{LiL}, $G$ is a  $\kappa$-Fr\'{e}chet--Urysohn space.\qed
\end{proof}
We will significantly strengthen Proposition \ref{p:compact-group-kFU} in Theorem \ref{t:feathered-group-kFU} below.





We finish this section with several open problems.

\begin{problem}
Let $X$ be a countable spaces/group.

{\rm(i)} Can $X$ be $\kappa$-sequential but not sequential?

{\rm(ii)} Can $X$ be sequentially Ascoli but not Ascoli?
\end{problem}

Proposition \ref{p:zpwoca-dim-Ascoli} and Proposition \ref{p:zpwoca-Ascoli} motivate the next problem.
\begin{problem}
Let $X$ be a zero-dimensional  space. Is it true that if $X$ is weakly open-compact attainable, then $X$ is a sequentially Ascoli space?
\end{problem}







\begin{remark} {\em
Question 4.2 of \cite{LiL} asks whether each $\kappa$-metrizable space $\kappa$-Fr\'{e}chet--Urysohn. $\kappa$-metrizable spaces were considered by Shchepin \cite{Shchepin}. Since, by Corollary 2 of \cite{Shchepin}, $C_p(X)$ is $\kappa$-metrizable for every space $X$, Theorem \ref{t:Cp-kappa-FU} implies that the answer to this question is negative.}
\end{remark}



\section{Permanent properties} \label{sec:permanent}


It is known that the product of two spaces which have one of the properties from Definition \ref{def:compact-type} usually does not have the same property. It turnes out that the same holds also for the properties introduced in Definition \ref{def:oca}. 

\begin{example} \label{exa:square-woca}
The space $V(\w_1)\times V(\w_1)$ is not weakly open-compact attainable.
\end{example}

\begin{proof}
By Theorem 20.2 of \cite{JW-1997}, there are two families $\{ A_\alpha: \alpha\in\w_1\}$ and $\{ B_\alpha: \alpha\in\w_1\}$ of infinite subsets of $\w$ satisfying the following two conditions:
\begin{enumerate}
\item[{\rm (a)}] $A_\alpha \cap B_\beta$ is finite for all $\alpha,\beta \in\w_1$,
\item[{\rm (b)}] for no $D\subseteq \w$, all sets $A_\alpha\SM D$ and $B_\alpha\cap D$, $\alpha\in\w_1$, are finite.
\end{enumerate}

Set $z:= (x_\infty,x_\infty)$ and
\[
U:=\big\{ (x_{n,\alpha},x_{n,\beta}) \in V(\w_1)\times V(\w_1): \; \alpha,\beta \in\w_1\, \mbox{ and } \, n\in A_\alpha \cap B_\beta\big\}.
\]
Since the points $ (x_{n,\alpha},x_{n,\beta})$ are isolated, the set $U$  is open. In \cite{GT-1982}, Gruenhage and Tanaka proved that $z$ is an accumulation point of $U$. Thus $\overline{U}\SM U\not=\emptyset$.

To show that $V(\w_1)\times V(\w_1)$ is not weakly open-compact attainable, by Proposition \ref{p:non-woca}, it suffices to prove that $U\cap K$ is finite for every compact subset of  $V(\w_1)\times V(\w_1)$. To this end, let $K$ be a compact subset of $V(\w_1)\times V(\w_1)$. Since the projection of $K$ onto each coordinate is a compact subset of $V(\w_1)$, without loss of generality we can assume that $K$ has the form
\[
K=\big\{ (x_{n,\alpha},x_{n,\beta})\in V(\w_1)\times V(\w_1): \; \alpha,\beta\in \Gamma \, \mbox{ and }\, n\in\w\big\}\cup\{z\},
\]
where $\Gamma$ is a finite subset of $\w_1$. Then, by (a) and the finiteness of $\Gamma$, we obtain that
\[
U \cap K\subseteq \big\{ (x_{n,\alpha},x_{n,\beta})\in V(\w_1)\times V(\w_1): \; \alpha,\beta\in \Gamma \, \mbox{ and } \, n\in A_\alpha \cap B_\beta\big\}
\]
is finite, as desired.\qed
\end{proof}

Below we consider another example. Recall that, by  (v) of Example \ref{exa:S2-lambda}, $S_2(\lambda)$ is a $k$-space.

\begin{example} \label{exa:S2-lambda-prod}
If $\lambda\geq\w_1$, then  $S_2(\lambda)\times S_2(\lambda)$  is neither an Ascoli space nor  a weakly open-compact attainable space.
\end{example}

\begin{proof}
We claim that there is a quotient map $Q$ from $S_2(\lambda)\times S_2(\lambda)$ onto $V(\w_1)\times V(\w_1)$. Indeed, by (ii) and Theorem 3.7.7 of \cite{Eng}, $f\times f$ is a perfect map. Observe also that perfect mappings are closed (Theorem 3.7.13 of \cite{Eng}) and hence quotients (Corollary 2.4.8 of \cite{Eng}). On the other hand, the map $g:V(\lambda)\times V(\lambda)\to V(\w_1)\times V(\w_1)$ defined by
\[
g(x,y)=(x,y) \mbox{ if } (x,y)\in V(\w_1)\times V(\w_1), \mbox{ and } g(x,y)=(x_\infty,x_\infty) \mbox{ otherwise},
\]
is continuous, and hence, by Corollary 2.4.6 of \cite{Eng}, is a quotient map. Therefore, by Corollary 2.4.4 of \cite{Eng}, the map $Q:=g\circ (f\times f)$ is a quotient map.
\smallskip

Suppose for a contradiction that $S_2(\lambda)\times S_2(\lambda)$ is an Ascoli  space. Then, by Proposition 3.3 of \cite{GR-prod} which states that $R$-quotient images of Ascoli spaces are Ascoli, we obtain that $V(\w_1)\times V(\w_1)$ is an Ascoli space. But this contradicts Proposition 2.14 of \cite{Gabr-seq-Ascoli}. Thus $S_2(\lambda)\times S_2(\lambda)$ is not Ascoli.
\smallskip

Suppose for a contradiction that $S_2(\lambda)\times S_2(\lambda)$ is a weakly open-compact attainable space. Then, by Lemma \ref{l:exa-dagger-map}(i) and Proposition \ref{p:quotient-woca}(i), we obtain that $V(\w_1)\times V(\w_1)$ is weakly open-compact attainable. However, this contradicts Example \ref{exa:square-woca}. Thus $S_2(\lambda)\times S_2(\lambda)$ is not a weakly open-compact attainable space. \qed
\end{proof}

\begin{problem}
Let $n\geq 2$. Does there exist a  weakly open-compact attainable (resp., $\kappa$-sequential, Ascoli or sequentially Ascoli) space $X$ such that $X^n$ has the same property but $X^{n+1}$ not?
\end{problem}

\begin{problem}
Does there exist a  weakly open-compact attainable (resp., $\kappa$-sequential, Ascoli or sequentially Ascoli) space $X$ such that $X^n$ has the same property for every $n\in\NN$, but $X^\w$ not?
\end{problem}

\begin{problem}
Is it true that $S_2(\lambda)^n$ is strongly zero-dimensional for every $n\in\NN$?
\end{problem}

In fact, we know only a unique class of spaces which are sequentially Ascoli but not Ascoli, this is the class of non-discrete $P$-spaces. Even a partial positive answer to the next problem will give a totally different type of such spaces.
\begin{problem}
Is it true that $S_2(\lambda)^n$ is a sequentially Ascoli space for every $n\in\NN$? What about $S_2(\lambda)\times S_2(\lambda)$?
\end{problem}

In fact,  Theorem 1 of \cite{Bagley-Weddington-69} states that if $X$ is a non-discrete $T_1$-space, then  there exists  an infinite cardinal $\lambda$ such that the product $V(\lambda)\times X$ is not a $k'$-space. For spaces $X$ with better separation axioms, this statement can be improved.

\begin{theorem} \label{t:Veer-non-discrete}
For every non-discrete space $X$ and each infinite cardinal $\lambda$, the product $V(\lambda)\times X$ is not open-compact attainable.
\end{theorem}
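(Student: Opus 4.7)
\medskip

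\noindent\textbf{Proof plan.}
Fix a non-isolated point $y_0\in X$ and set $z:=(x_\infty,y_0)\in V(\lambda)\times X$. The goal is to construct an open set $W\subseteq V(\lambda)\times X$ such that $z\in\overline W$ but no compact $K\ni z$ satisfies $z\in\overline{K\cap W}$. The engine is the ``finite-fingers'' observation (as in (i) of Example \ref{exa:S2-lambda}, applied to $\lambda$ in place of the countable ordinal): every compact subset of $V(\lambda)$ is contained in $\{x_\infty\}\cup\{x_{n,\alpha}:n\in\w,\alpha\in F\}$ for some finite $F\subseteq\lambda$. Hence every compact $K\subseteq V(\lambda)\times X$ lives in $(\{x_\infty\}\cup\bigcup_{i=1}^k\{x_{n,\alpha_i}:n\in\w\})\times X$ for a finite tuple $\alpha_1,\dots,\alpha_k\in\lambda$ depending on $K$, and so $K$ touches only finitely many ``fingers'' of any $W$ built fiber-by-fiber over $\lambda$.

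\smallskip

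The construction splits according to the local behavior of $y_0$ inside compacta of $X$.

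\emph{Case A: $y_0$ is isolated in every compact subset $L\subseteq X$ with $y_0\in L$.} Take $W:=V(\lambda)\times(X\SM\{y_0\})$. Since $X$ is $T_1$, $W$ is open, and $z\in\overline W$ because $y_0$ is non-isolated in $X$. Given compact $K\ni z$, set $L:=\pi_X(K)$, a compact subset of $X$ containing $y_0$. By hypothesis $y_0$ is isolated in $L$, and since $L$ is closed in $X$ this gives $y_0\notin\overline{L\SM\{y_0\}}$ (computed in $X$). Consequently $K\cap W\subseteq V(\lambda)\times(L\SM\{y_0\})$ is a set whose closure has second coordinate in $\overline{L\SM\{y_0\}}$, hence avoids $z$.

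\emph{Case B: there exists a compact $L\subseteq X$ with $y_0$ non-isolated in $L$.} Then $y_0\in\overline{L\SM\{y_0\}}$, and Tychonoff separation yields, for each $y\in L\SM\{y_0\}$, an open neighborhood $\Omega_y\subseteq X$ of $y$ with $y_0\notin\overline{\Omega_y}$. The resulting family $\{\Omega_y\}_{y\in L\SM\{y_0\}}$ satisfies (i) $y_0\notin\overline{\Omega_y}$ for each $y$, and (ii) $y_0\in\overline{\bigcup_y\Omega_y}$ because $\bigcup_y\Omega_y\supseteq L\SM\{y_0\}$. Re-index this family by $\lambda$ through a map $\alpha\mapsto V_\alpha$, chosen so that $\{V_\alpha:\alpha\in\lambda\}$ still has $y_0\in\overline{\bigcup_\alpha V_\alpha}$ while retaining $y_0\notin\overline{V_\alpha}$ for each $\alpha$, and put
\[
W:=\bigcup_{\alpha\in\lambda}\{x_{n,\alpha}:n\in\w\}\times V_\alpha.
\]
This $W$ is open. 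For a basic neighborhood $N_1\times N_2$ of $z$, property (ii) gives $\alpha$ with $V_\alpha\cap N_2\ne\es$, and then $x_{n,\alpha}\in N_1$ for all large $n$, showing $z\in\overline W$. Conversely, for a compact $K\ni z$ with $\pi_{V(\lambda)}(K)\subseteq\{x_\infty\}\cup\bigcup_{i=1}^k\{x_{n,\alpha_i}:n\in\w\}$, the inclusion
\[
K\cap W\subseteq V(\lambda)\times\bigcup_{i=1}^k V_{\alpha_i}
\]
yields $\overline{K\cap W}\subseteq V(\lambda)\times\bigcup_{i=1}^k\overline{V_{\alpha_i}}$, a set whose second coordinate misses $y_0$ by property (i); therefore $z\notin\overline{K\cap W}$.

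\smallskip

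\noindent\emph{Main obstacle.} The delicate point is the re-indexing in Case B: the natural family $\{\Omega_y\}_{y\in L\SM\{y_0\}}$ may be larger than $\lambda$, and one needs a subfamily of cardinality at most $\lambda$ whose union still accumulates at $y_0$. This is automatic when the tightness of $y_0$ in some compact $L\subseteq X$ containing $y_0$ is at most $\lambda$; in general one combines this with Case A, choosing $y_0$ (and $L$) judiciously inside $X$ so that one of the two cases applies. Making this dichotomy-and-choice step go through in full generality is the principal technical content of the proof.
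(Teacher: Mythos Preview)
Your overall architecture is sound, and Case A is a clean, direct argument (in fact more elementary than the paper's treatment of the corresponding situation). The genuine gap is exactly the one you flag: the re-indexing in Case B. As stated, the argument does not go through, because the dichotomy ``$y_0$ isolated in all compacta vs.\ not'' is the wrong one. Take $X=[0,\omega_1]$ and $y_0=\omega_1$: then $y_0$ is non-isolated in the compact set $L=X$ itself, so you are in Case B, but $y_0$ is a $P$-point of $X$, and hence for \emph{every} countable family $\{V_n\}$ with $y_0\notin\overline{V_n}$ one has $y_0\notin\overline{\bigcup_n V_n}$. For $\lambda=\omega$ the re-indexing is therefore impossible for this particular $y_0$, and nothing in the compact $L$ helps.

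The fix is to change the dichotomy to ``$X$ is a $P$-space vs.\ not''. If $X$ is not a $P$-space, pick a non-isolated non-$P$-point $y_0$; take a $G_\delta$ set $G=\bigcap_n U_n\ni y_0$ that is not a neighborhood, shrink by regularity to open $W_n$ with $y_0\in W_n\subseteq\overline{W_n}\subseteq U_n$, and set $V_n:=X\setminus\overline{W_n}$. Then $y_0\notin\overline{V_n}$ while $y_0\in\overline{\bigcup_n V_n}$, and a surjection $\lambda\to\omega$ gives the desired $\lambda$-indexed family; your Case B construction then works verbatim. If $X$ \emph{is} a $P$-space, every compact subset is finite, so any non-isolated $y_0$ falls under your Case A. This is precisely the split the paper uses: for non-$P$-spaces it cites an external proposition producing the countable family $\{U_m\}$, and for $P$-spaces it argues indirectly via ``open-compact attainable $\Rightarrow$ Ascoli'' together with the fact that the projection $V(\lambda)\times X\to X$ is open (hence $\kappa$-pseudo open) and non-discrete $P$-spaces are not Ascoli. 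Your Case A handles the $P$-space situation by a direct construction rather than this chain of implications, which is a pleasant simplification.
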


\begin{proof}
We shall consider $\w$ as a subset of $\lambda$.
If $X$ is a $P$-space, then $X$ is not an Ascoli space by Proposition 2.9 of \cite{Gabr:weak-bar-L(X)}. Therefore, by Proposition \ref{p:quot-oca} proved below, the product  $V(\lambda)\times X$ is not open-compact attainable. So we assume that $X$ is not a $P$-space.

By Proposition 3.14 of \cite{Gab-Pel}, there is a point $z\in X$ and a sequence $\{U_m\}_{m\in\w}$ of open subsets of $X$ such that $U_n\cap U_m=\emptyset$ for all distinct $n,m\in \w$ and such that
\begin{equation} \label{equ:Veer-non-discrete}
z\in \overline{\bigcup_{m\in\w} U_m} \setminus \bigcup_{m\in\w} U_m \;\;\mbox{ and }\;\; z \not\in \overline{\bigcup_{m\leq n} U_m} \;\; (n\in \w).
\end{equation}
Consider the following subset of $V(\lambda)\times X$
\[
W:=\bigcup_{n,m\in\w} \big\{x_{n+m,m}\big\}\times U_m.
\]
Since all the points $x_{n+m,m}$ are isolated in $V(\lambda)$, the set $W$ is open.

We claim that $(x_\infty,z)\in \overline{W}\SM W$. Indeed, it is clear that $(x_\infty,z)\not\in W$. To show that $(x_\infty,z)\in \overline{W}$, take an arbitrary standard neighborhood
\[
\mathcal{O}:=\{ x_{n,\alpha}: \alpha\in\lambda \mbox{ and } n\geq k_\alpha\}\cup\{ x_\infty\}
\]
of $x_\infty$, where $k_\alpha\in \w$ for every $ \alpha\in\lambda$, and choose a neighborhood $U$ of $z$. By (\ref{equ:Veer-non-discrete}), take $m\in\w$ such that $U_m\cap U\not=\emptyset$, and set $n=k_m$. Then $(\mathcal{O}\times U)\cap \big(\big\{x_{n+m,m}\big\}\times U_m\big) \not=\emptyset$. Thus $(x_\infty,z)\in \overline{W}$.

To show that $V(\lambda)\times X$ is not open-compact attainable, we fix an arbitrary compact subset  $K$ of $V(\lambda)\times X$ and show that $(x_\infty,z)\not\in \overline{K\cap W}$. To this end,  we observe that there is a finite subset $F$ of $\lambda$ such that the projection of $K$ onto $V(\lambda)$ is contained in the compact set
\[
K_F :=\big\{ x_{n,\alpha}: n\in\w \mbox{ and } \alpha\in F\big\}\cup\{ x_\infty\}.
\]
Therefore
\[
K\cap W \subseteq K_F\cap W \subseteq V(\lambda)\times \bigcup_{m\in F\cap \w} U_m,
\]
and hence, by (\ref{equ:Veer-non-discrete}), we have  $(x_\infty,z)\not\in \overline{K\cap W}$. Thus $V(\lambda)\times X$ is not open-compact attainable.\qed
\end{proof}

Let $X$ and $Y$ be spaces with only one non-isolated point. Proposition 4.1 of \cite{LiL} states that $X\times Y$ is $\kappa$-Fr\'{e}chet--Urysohn if, and only, if $X\times Y$ is Fr\'{e}chet--Urysohn. We know (Proposition \ref{p:1-non-isol-kappa-sequential}) that if $X$ is $\kappa$-sequential, then it is Fr\'{e}chet--Urysohn. In the light of these results and Example \ref{exa:square-woca}, one can natural ask whether the $\kappa$-sequentiality of $X\times Y$ implies that $X\times Y$ is Fr\'{e}chet--Urysohn. We answer this problem in the negative in the next example which has also the following another meaning. Corollary 4.1 of \cite{LiL} states that there are two compact,  Fr\'{e}chet--Urysohn spaces, such that $X\times Y$ is not $\kappa$-Fr\'{e}chet--Urysohn.
Analogously, the product of an open-compact attainable space and a compact space can be not open-compact attainable as the following example shows.

\begin{example} \label{exa:product-V-S-non-oca}
Let $V(\w)$ be the countable Fr\'{e}chet--Urysohn fan. Then the product $V(\w)\times (\w+1)$ is a sequential space which is not open-compact attainable.
\end{example}

\begin{proof}
The product $V(\w)\times (\w+1)$ is a sequential space by Exercise 3.10.I(b) of \cite{Eng}, and it is  not open-compact attainable by Theorem \ref{t:Veer-non-discrete}.\qed
\end{proof}

On the other hand, the product of a $k_\IR$-space and a compact space is always a $k_\IR$-space by Theorem 4.3 of \cite{GR-prod}. Analogously, by Theorem 4.10 of \cite{GR-prod}, the product of an $s_\IR$-space and a compact sequential space is always an $s_\IR$-space. It is also known (see Theorem 4.2 of \cite{Gabr-seq-Ascoli}) that the product of an Ascoli space and a compact space is Ascoli. These results motivate the following problem.

\begin{problem}
Let $X$ be a weakly open-compact attainable space (resp., a $\kappa$-sequential space), and let $K$ be a compact space (resp., a $\kappa$-sequential compact space). Is $X\times K$  weakly open-compact attainable  (resp., $\kappa$-sequential)?
\end{problem}

Observe that the product $V(\w)\times (\w+1)$, being sequential, is an Ascoli space. Therefore one cannot strengthen Theorem \ref{t:Veer-non-discrete} replacing open-compact attainability by the property of being an Ascoli space. On the other hand, we can ask whether there is a ``nice'' space $X$ such that the product $V(\w)\times X$ is not sequentially Ascoli. Taking into account Example \ref{exa:square-woca}, by ''nice'' it is naturally to understand metric spaces. We answer this problem in the affirmative in the next example in which $\mathbb{Q}$ denotes the space of rational numbers with its usual topology. Observe also that the space $V(\w)\times \mathbb{Q}$ is not a $k$-space by results of Michael \cite{Michael-68}, and hence  our example essentially generalizes this result.

\begin{example} \label{exa:Veer-Q}
The product $V(\w)\times \mathbb{Q}$ is neither weakly open-compact attainable nor sequentially Ascoli.
\end{example}

\begin{proof}
Since $X:=V(\w)\times \mathbb{Q}$ is countable, by (i) of Corollary \ref{c:counable-X-woca-Seq-Ascoli}, it suffices to prove that $X$ is not $\kappa$-sequential.

Choose two sequences $\{a_k\}_{k\in\w}$ and  $\{b_k\}_{k\in\w}$ of irrational numbers such that
\begin{enumerate}
\item[(a)] $0<b_{k+1}<a_k<b_k$ for every $k\in\w$;
\item[(b)] $b_k\to 0$ as $k\to\infty$.
\end{enumerate}
and set  $W_k:=(a_k,b_k)\cap \mathbb{Q}$. Then $W_k$ is a clopen subset of $\mathbb{Q}$.

For every $k\in\w$, choose two sequences $\{a_{n,k}\}_{n\in\w}$ and  $\{b_{n,k}\}_{n\in\w}$ of irrational numbers such that
\begin{enumerate}
\item[(c)] $a_k<b_{n+1,k}<a_{n,k}<b_{n,k}< b_k$ for every $n\in\w$;
\item[(d)] $b_{n,k}\to a_k$ as $n\to\infty$,
\end{enumerate}
and set $W_{n,k}:=(a_{n,k},b_{n,k})\cap \mathbb{Q}$. Then for every $n,k\in\w$, $W_{n,k}$ and $\bigcup_{n\in\w} W_{n,k}$ are clopen subsets of $\mathbb{Q}$. It is easy to see that if $0\not=x\in\mathbb{Q}$, then $x$ has a neighborhood which intersects with at most one of the sets $W_{i,j}$, $i,j\in\w$.

For every $n,k\in\w$, we set
\[
U_{n,k}:=\{x_{n,k}\}\times W_{n,k} \;\; \mbox{ and }\;\;  U:=\bigcup_{n,k\in\w} U_{n,k}.
\]
It is clear that all $U_{n,k}$ are clopen subsets of $X$, and $U$ is an open subset of $X$.
\smallskip

{\em Claim 1. $\overline{U}\SM U=\{(x_\infty,0)\}$.}
\smallskip

To prove the claim, let $(y,z)\in \overline{U}\SM U$. If $y=x_{n,k}$ for some $n,k\in\w$, then $y=x_{n,k}$ is isolated in $V(\w)$. Therefore $(\{x_{n,k}\}\times \mathbb{Q})\cap U= \{x_{n,k}\}\times W_{n,k}$ is a closed subset of $U$. Hence $(y,x)\in U$, a contradiction. Thus $y=x_\infty$.

Assume that $z\not= 0$. Choose a neighborhood $\mathcal{O}$ of $z$ which intersects with at most one of the sets $W_{i,j}$, say  $W_{n,k}$. Choose a neighborhood $V$ of $x_\infty$ such that $x_{n,k}\not\in V$. Then $(V\times \mathcal{O})\cap U=\emptyset$, and hence $(x_\infty,z)\not\in \overline{U}$, a contradiction. Thus $z=0$.

It remains to show that $(x_\infty,0)\in \overline{U}\SM U$. Note first that, by construction,  $(x_\infty,0)\not\in U$. We prove that  $(x_\infty,z)\in \overline{U}$. Let $P$ be a neighborhood of $x_\infty$ in $V(\w)$, and $R=(-r,r)\cap \mathbb{Q}$ be a neighborhood of $0$ in $\mathbb{Q}$. Choose $k\in\w$  such that $b_k<r$, so that $W_{n,k}\subseteq R$ for every $n\in\w$. For the chosen $k$, take $n\in\w$ such that $x_{n,k}\in P$. Then $U_{n,k}\subseteq P\times R$. Thus  $(x_\infty,0)\in \overline{U}$, and Claim 1 is proved.
\smallskip

To finish the proof, by Claim 1, it suffices to show that there is no sequences in $U$ converging to $(x_\infty,0)$. Suppose for a contradiction that there is a sequence $\big\{ \big(x_{n_i,k_i},z_{n_i,k_i}\big)\big\}_{i\in\w}$ in $U$ which converges to  $(x_\infty,0)$. Then $x_{n_i,k_i}\to x_\infty$. Therefore there is $k\in\w$ such that $k_i\leq k$ for every $i\in\w$. So, without loss of generality we can assume that $k_i=k$ and $x_{n_i,k}\to x_\infty$. This implies that $z_{n_i,k}\in W_{n_i,k} \subseteq W_k$, and hence  $z_{n_i,k}\not\to 0$ as $i\to\infty$. Therefore $\big(x_{n_i,k_i},z_{n_i,k_i}\big)\not\to (x_\infty,0)$, a contradiction. \qed
\end{proof}

In \cite[Example 10.5]{Michael1972} Michael constructed  a separable metrizable space $X$ and a compact space $Y$ such that $X\times Y$ is not $k'$-space. This result and Examples \ref{exa:product-V-S-non-oca} and \ref{exa:Veer-Q} motivate the following problem.

\begin{problem}
Let $X$ be a separable metrizable space (for example, $X=\mathbb{Q}$), and let $Y$ be a compact space. Is it true that the product $X\times Y$ is open-compact attainable?
\end{problem}

Now we consider the case of products of (two) pseudocompact spaces. Proposition 2.7 of \cite{Gabr-pseudo-Ascoli} states that the product of an arbitrary family of Ascoi {\em pseudocompact} spaces is Ascoli. This proposition and Theorem 4.6 of \cite{GR-prod} imply that the product of two pseudocompact  $k_\IR$-spaces is a pseudocompact $k_\IR$-space. Analogously, by Proposition 2.7 of \cite{Gabr-pseudo-Ascoli} and Theorem 4.9 of \cite{GR-prod},   the product of two pseudocompact  $s_\IR$-spaces is a pseudocompact $s_\IR$-space. These results motivate the following problem.
\begin{problem}
Let $X$ and $Y$ be  two $\kappa$-sequential (resp., open-compact attainable or weakly  open-compact attainable) pseudocompact spaces. Is it true that $X\times Y$ has the same property? What about $X\times X$ for $X$ from Example \ref{exa:woca-non-Ascoli}?
\end{problem}



In Example \ref{exa:square-woca} the space $V(\w_1)$ is not first-countable. For first-countable spaces the situation changes.
The following was observed by Mr\'{o}wka \cite[Theorem~3.1]{Mrowka}  even though the notion of $\kappa$-Fr\'{e}chet--Urysohn was not yet in use; it is explicitly formulated in Fact 2.1 of \cite{LiL}.

\begin{fact} \label{f:product-kFU}
Let $X=\prod_{\alpha\in \AAA}X_\alpha$ be a product of first countable spaces. Then $X$ is a  $\kappa$-Fr\'{e}chet--Urysohn space.
\end{fact}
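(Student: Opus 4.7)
\begin{proof}[Proof proposal]
The plan is to reduce the problem to a countable subproduct, where first-countability of the factors carries over, and then to exploit the openness of $U$ to modify approximating points ``for free'' on all but finitely many coordinates at each stage.

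Fix an open $U\subseteq X$ and $x=(x_\alpha)\in\overline{U}$. For each $\alpha\in\AAA$ let $\{V^{(\alpha)}_n:n\in\w\}$ be a decreasing countable neighborhood base at $x_\alpha$. I will inductively construct finite sets $F_n\subseteq\AAA$ and points $y_n\in U$ such that (setting $B_n:=F_0\cup\cdots\cup F_n$)
\begin{enumerate}
\item[{\rm(a)}] $(y_n)_\alpha\in V^{(\alpha)}_n$ for every $\alpha\in B_n$;
\item[{\rm(b)}] $(y_n)_\alpha=x_\alpha$ for every $\alpha\in\AAA\SM F_n$ (so $y_n$ and $x$ differ on finitely many coordinates).
\end{enumerate}

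Assume $F_0,\dots,F_{n-1}$ and $y_0,\dots,y_{n-1}$ have been defined. The set
\[
W_n:=\{z\in X:z_\alpha\in V^{(\alpha)}_n \text{ for all }\alpha\in B_{n-1}\}
\]
is a basic open neighborhood of $x$, so by $x\in\overline{U}$ there is some $y'_n\in W_n\cap U$. Because $U$ is open in the product topology, pick a finite $F_n\supseteq B_{n-1}$ and an open $V\subseteq\prod_{\alpha\in F_n}X_\alpha$ with $y'_n\in\pi_{F_n}^{-1}(V)\subseteq U$, and (shrinking $V$ if necessary) arrange $V\subseteq\prod_{\alpha\in F_n}V^{(\alpha)}_n$. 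Define $y_n$ by $(y_n)_\alpha=(y'_n)_\alpha$ for $\alpha\in F_n$ and $(y_n)_\alpha=x_\alpha$ for $\alpha\notin F_n$. Then $y_n\in\pi_{F_n}^{-1}(V)\subseteq U$ since we only altered coordinates outside $F_n$, and (a), (b) hold.

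Finally, set $B:=\bigcup_{n\in\w}B_n$, a countable subset of $\AAA$. Any coordinate $\alpha\notin B$ satisfies $(y_n)_\alpha=x_\alpha$ for all $n$ by (b). For $\alpha\in B$ we have $\alpha\in B_k$ for some $k$, so for all $n\geq k$ condition (a) gives $(y_n)_\alpha\in V^{(\alpha)}_n$, which forces $(y_n)_\alpha\to x_\alpha$ since $\{V^{(\alpha)}_n\}_n$ is a decreasing neighborhood base. Hence $y_n\to x$ in the product topology, and $X$ is $\kappa$-Fr\'{e}chet--Urysohn.\qed
\end{proof}

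The main thing to watch is the inductive modification step: one must enlarge the current finite set of ``controlled'' coordinates to include enough of $U$'s dependence so that setting the remaining coordinates equal to $x_\alpha$ preserves membership in $U$. The openness of $U$ in the product topology is precisely what makes this possible, and it is essential that the index set of enlargements grows by only finitely many coordinates at each step so that the total set $B$ remains countable. Once $B$ is identified, convergence in the product topology reduces to coordinate-wise convergence on $B$, which was built into the construction via condition (a).
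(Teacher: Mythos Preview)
The paper does not supply a proof of this fact; it simply cites Mr\'{o}wka \cite{Mrowka} and Liu--Ludwig \cite{LiL}. Your direct argument is the standard one and is essentially correct, but there is a small slip in the inductive step. When you write ``(shrinking $V$ if necessary) arrange $V\subseteq\prod_{\alpha\in F_n}V^{(\alpha)}_n$'', this would force $(y'_n)_\alpha\in V^{(\alpha)}_n$ for every $\alpha\in F_n$; however, you only know this for $\alpha\in B_{n-1}$ (from $y'_n\in W_n$), and on the \emph{new} coordinates $\alpha\in F_n\SM B_{n-1}$ you have no such control. So the shrinking cannot be carried out while keeping $y'_n\in\pi_{F_n}^{-1}(V)$, and condition (a) as stated --- for all of $B_n$ --- is not established.

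The fix is painless: weaken (a) to require $(y_n)_\alpha\in V^{(\alpha)}_n$ only for $\alpha\in B_{n-1}$. This is exactly what your construction actually delivers (via $y'_n\in W_n$ and $(y_n)_\alpha=(y'_n)_\alpha$ for $\alpha\in B_{n-1}\subseteq F_n$), and it suffices for the convergence argument: if $\alpha\in B_k$, then for every $n>k$ one has $\alpha\in B_{n-1}$ and hence $(y_n)_\alpha\in V^{(\alpha)}_n$, which gives $(y_n)_\alpha\to x_\alpha$. Simply delete the ``shrinking'' clause and adjust (a) accordingly.
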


\begin{remark}{\em
Since $\kappa$-Fr\'{e}chet--Urysohn spaces are  open-compact attainable and $\kappa$-sequential, it follows that any product of first countable spaces is open-compact attainable and $\kappa$-sequential. Example \ref{exa:square-woca}  shows that it is impossible to weaken the first countability to Fr\'{e}chet--Urysohness.}
\end{remark}

In general, the properties of being open-compact attainable, $\kappa$-sequential or weakly open-compact attainable are not {\em closed hereditary}. Indeed, let $X=\NN\cup\{x_\ast\}$, where $x_\ast\in\beta\NN\SM\NN$. Since $X$ does not contain infinite compact subsets and $\NN$ is open in $X$, Proposition \ref{p:non-woca} implies that $X$ is not weakly open-compact attainable. Being countable and hence realcompact, $X$ is closely embedded into some power $\IR^\lambda$ which is even a  $\kappa$-Fr\'{e}chet--Urysohn space  by Fact \ref{f:product-kFU}.

Arhangel'skii proved (see \cite[3.12.15]{Eng}) that if $X$ is a hereditary $k$-space (i.e., {\em every} subspace of $X$ is a $k$-space), then $X$ is Fr\'{e}chet--Urysohn. In Theorem 2.21 of \cite{Gabr-seq-Ascoli} this result was generalized as follows: a hereditary Ascoli space is Fr\'{e}chet--Urysohn. In the next theorem we strengthen this remarkable Arhangel'skii theorem in another direction.

\begin{theorem} \label{t:woca-hereditary}
A hereditary weakly open-compact attainable space $X$ is Fr\'{e}chet--Urysohn.
\end{theorem}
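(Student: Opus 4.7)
The plan is to reduce Theorem \ref{t:woca-hereditary} to Arhangel'skii's classical result --- recalled by the authors in the paragraph immediately preceding the statement --- that every hereditary $k$-space is Fr\'{e}chet--Urysohn. Accordingly, it suffices to verify that every hereditarily weakly open-compact attainable space is a hereditary $k$-space.

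Fix an arbitrary subspace $Y\subseteq X$ and a non-closed subset $B\subseteq Y$, and choose $a\in \overline{B}^Y\SM B$. I need to exhibit a compact set $L\subseteq Y$ such that $L\cap B$ is not closed in $L$. The key trick is to pass to the smaller subspace $Y':=B\cup\{a\}$, which is again weakly open-compact attainable by the hereditary hypothesis. Since $X$ (hence $Y'$) is $T_1$, the singleton $\{a\}$ is closed in $Y'$, so $B=Y'\SM\{a\}$ is open in $Y'$; and $B$ is non-closed in $Y'$ because $a\in \overline{B}^{Y'}\SM B$. Applying the defining property of weakly open-compact attainable spaces to this open non-closed subset of $Y'$ produces a point $z\in \overline{B}^{Y'}\SM B$ together with a compact $L\subseteq Y'$ satisfying $z\in L\cap \overline{L\cap B}^{Y'}$.

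The pivotal observation is that $Y'\SM B=\{a\}$, which forces $z=a$. Consequently $a\in L\cap \overline{L\cap B}^L$ while $a\notin L\cap B$, so $L\cap B$ fails to be closed in the compact set $L\subseteq Y$. Thus $Y$ is a $k$-space; since $Y\subseteq X$ was arbitrary, $X$ is a hereditary $k$-space, and Arhangel'skii's theorem delivers the Fr\'{e}chet--Urysohn conclusion. I do not expect a serious obstacle here: the whole argument rests on the elementary fact that restricting the ambient space to $B\cup\{a\}$ pins down the existentially quantified point $z$ in the definition of ``weakly open-compact attainable'' to be $a$ itself, after which the required $k$-space witness is read off for free.
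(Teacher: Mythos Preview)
Your proof is correct and follows essentially the same route as the paper's: both reduce to Arhangel'skii's hereditary $k$-space theorem, and both exploit the passage to the subspace $B\cup\{a\}$ so that the existentially quantified boundary point in the definition of weak open-compact attainability is forced to be $a$ itself. The only cosmetic difference is that the paper argues by contradiction (assuming a non-closed $A$ with $A\cap K$ closed for every compact $K$) while you proceed directly, but the core trick is identical.
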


\begin{proof}
Taking into account Arhangel'skii's theorem \cite[3.12.15]{Eng} it suffices to prove that $X$ is a $k$-space. Assuming the converse we could find a non-closed subspace $A$ of $X$ such that $A\cap K$ is closed in $K$ for every compact subspace $K$ of $X$. Choose an arbitrary point $z\in \overline{A}\SM A$ and consider the subspace $Y:= A\cup\{ z\}$ of $X$.

We claim that if $K$ is a compact subspace of $Y$, then either $z\not\in K$ or $z$ is an isolated point of $K$. Indeed, if $z$ is a non-isolated point of $K$, then $z \in \overline{A\cap K}$. Since $z\not\in A$, we obtain that $A\cap K$ is not closed in $K$ that contradicts the choice of $A$.

Clearly, $A$ is an open subset of $Y$. Since, by assumption of the theorem, $Y$ is  weakly open-compact attainable, there is a compact subset $K$ of $Y$ such that $z\in K\cap \overline{A\cap K}$. In particular, $z$ is  a non-isolated point of $K$ which contradicts the claim.\qed
\end{proof}

It is known (see Proposition 2.2 of \cite{Gabr-pseudo-Ascoli}), that if $S$ is a dense Ascoli pseudocompact subspace of a space $X$, then also $X$ is Ascoli. An analogous result is not valid for (weakly) open-compact attainable  pseudocompact spaces or  $\kappa$-sequential pseudocompact spaces. Indeed, let $X$ be the union of $S$ and $\{\mathbf{1}\}$ from Example \ref{exa:non-woca-Ascoli}. Then $S$ is a Fr\'{e}chet--Urysohn $\w$-bounded space. However, by Example \ref{exa:non-woca-Ascoli},  the space $X$ (which is the  closure of $S$)  does not belong to any of these classes of spaces.


\section{$\kappa$-pseudo open maps and weakly  $\kappa$-pseudo open maps} \label{sec:maps}


A continuous image of a (sequentially) Ascoli pseudocompact space is also Ascoli pseudocompact, see Proposition 2.1 of \cite{Gabr-pseudo-Ascoli}. However, an analogous result for (weakly) open-compact attainable pseudocompact  spaces and $\kappa$-sequential pseudocompact  spaces is not true in general. Indeed, let $Y$ be the disjoint union of $S$ and $\{\mathbf{1}\}$ from Example \ref{exa:non-woca-Ascoli}. Since $S$ is a Fr\'{e}chet--Urysohn $\w$-bounded space, the space $Y$ belongs to any of those spaces. However, by Example \ref{exa:non-woca-Ascoli}, the continuous image $X$ of $Y$ under the identity mapping is not weakly open-compact attainable. Therefore, to obtain positive results we should consider some subclasses of the class of continuous mappings. The first one is the class of  $\kappa$-pseudo open maps. Recall that $F:X\to Y$ is {\em  $\kappa$-pseudo open } if whenever $f^{-1}(y)\subseteq U$ where $y\in Y$ and $U$ is open in $X$, then $y\in \Int\big(\overline{f(U)}\big)$.

\begin{lemma} \label{l:charac-dagger-map}
Let $f:X\to Y$ be a continuous surjective map between spaces $X$ and $Y$. Then the following assertions are equivalent:
\begin{enumerate}
\item[{\rm(i)}] $f$ is a  $\kappa$-pseudo open map;
\item[{\rm(ii)}] $\overline{U}=f\big(\overline{f^{-1}(U)}\big)$ for any open $U\subseteq Y$;
\item[{\rm(iii)}] $f^{-1}(y)\cap \overline{f^{-1}(U)}\not=\emptyset$ for every open non-closed set $U\subseteq Y$ and each $y\in \overline{U}\SM U$.
\end{enumerate}
\end{lemma}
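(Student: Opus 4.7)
My plan is to establish the circular chain (i) $\Rightarrow$ (ii) $\Rightarrow$ (iii) $\Rightarrow$ (i). Throughout I would lean on continuity and surjectivity of $f$, which together give the always-valid inclusion $f\big(\overline{f^{-1}(U)}\big) \subseteq \overline{U}$ for every $U\subseteq Y$ (from continuity) and the identity $f\big(f^{-1}(U)\big) = U$ (from surjectivity).

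For (i) $\Rightarrow$ (ii) I need only the reverse inclusion $\overline{U} \subseteq f\big(\overline{f^{-1}(U)}\big)$, which I would argue by contradiction. Given $y\in\overline{U}$ with $y\notin f\big(\overline{f^{-1}(U)}\big)$, the entire fibre $f^{-1}(y)$ lies in the open set $V := X\SM\overline{f^{-1}(U)}$, so (i) forces $y\in\Int\big(\overline{f(V)}\big)$. The crucial observation is that $V$ is disjoint from $f^{-1}(U)$, hence $f(V)\cap U=\emptyset$, giving $\overline{f(V)}\subseteq Y\SM U$ (since $Y\SM U$ is closed) and therefore $\Int\big(\overline{f(V)}\big)\subseteq Y\SM\overline{U}$ --- contradicting $y\in\overline{U}$. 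The implication (ii) $\Rightarrow$ (iii) is essentially tautological: for $U$ open non-closed and $y\in\overline{U}\SM U$, (ii) supplies an $x\in\overline{f^{-1}(U)}$ with $f(x)=y$, so $x\in f^{-1}(y)\cap\overline{f^{-1}(U)}$.

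For (iii) $\Rightarrow$ (i) I would let $U\subseteq X$ be open with $f^{-1}(y)\subseteq U$, write $A:=\overline{f(U)}$, and assume for contradiction that $y\notin\Int(A)$. Set $G:=Y\SM A$; this $G$ is open, and necessarily nonempty (otherwise $\Int(A)=Y\ni y$). Surjectivity gives $y\in f(U)\subseteq A$, so $y\notin G$, yet the failure of $y\in\Int(A)$ means $y\in\overline{G}$. Thus $G$ is open non-closed and $y\in\overline{G}\SM G$, so (iii) produces a point $x\in f^{-1}(y)\cap\overline{f^{-1}(G)}\subseteq U\cap\overline{f^{-1}(G)}$. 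Hence the open neighborhood $U$ of $x$ meets $f^{-1}(G)$; but any $x'\in U\cap f^{-1}(G)$ satisfies $f(x')\in f(U)\cap G\subseteq A\cap(Y\SM A)=\emptyset$, which is impossible.

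The main obstacle I anticipate is bookkeeping in the (i) $\Rightarrow$ (ii) direction. The natural guess $V=X\SM f^{-1}(U)$ is wrong because one cannot conclude $y\in\Int(\overline{f(V)})$ for such a $V$ (the fibre need not fit inside it), and also because $f(V)$ would only avoid $U$ in the wrong way. The correct choice $V=X\SM\overline{f^{-1}(U)}$ simultaneously makes $V$ open, accommodates the fibre, and forces $f(V)$ to miss all of $U$ --- which is exactly what lets $\Int(\overline{f(V)})$ land inside $Y\SM\overline{U}$. Once this choice is in place, (iii) $\Rightarrow$ (i) proceeds by the symmetric observation that $Y\SM\overline{f(U)}$ is the right witness open set to feed into the hypothesis.
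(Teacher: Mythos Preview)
Your proof is correct and follows essentially the same approach as the paper: the key choices $V=X\setminus\overline{f^{-1}(U)}$ in (i)$\Rightarrow$(ii) and $G=Y\setminus\overline{f(U)}$ in (iii)$\Rightarrow$(i) coincide with the paper's, and the contradiction arguments are the same. The only cosmetic difference is that the paper closes the loop via (ii)$\Rightarrow$(i) and (iii)$\Rightarrow$(ii) separately, whereas you prove (iii)$\Rightarrow$(i) directly---which amounts to folding those two steps together.
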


\begin{proof}
(i)$\Ra$(ii)  Assume that $f$ is a $\kappa$-pseudo open map. Let $U\subseteq Y$ be an open set. Set $F:=\overline{f^{-1}(U)}$. We have to verify that $\overline{U}=f(F)$. The continuity of $f$ implies  $f(F) \subseteq \overline{U}$.

To prove the inverse inclusion $\overline{U} \subseteq f(F)$, we suppose for a contradiction that there is $y\in \overline{U}$ such that $y\notin f(F)$. Then $f^{-1}(y)\subseteq W= X\setminus F$. Since $f$ is $\kappa$-pseudo open, we have $y\in \Int \cl{f(W)}$. Therefore, $f(W)\cap U\neq\emptyset$. Let $z\in f(W)\cap U$. Since $z\in U$, we have $f^{-1}(z)\subseteq f^{-1}(U)\subseteq F=X\setminus W$, and hence $z\notin f(W)$. A contradiction.
\smallskip

(ii)$\Ra$(i) Let $W\subseteq X$ be an open set, and let $y\in Y$ be such that $f^{-1}(y)\subseteq W$. We show that $y\in \Int{f(\overline{W})}$. Assuming the converse we obtain that $y\in \overline{U}$, where $U=Y\SM\overline{f(W)}$. Therefore, $y\in f\big(\overline{f^{-1}(U)}\big)$. Let $x\in f^{-1}(y)\cap \overline{f^{-1}(U)}$. Therefore the set $W$ is an open neighborhood of the point $x$ such that  $W\cap f^{-1}(U)=\emptyset$. Thus, $x\notin \overline{f^{-1}(U)}$, a contradiction.

(ii)$\Ra$(iii)  Let $U$ be an open non-closed set and let $y\in \overline{U}\SM U$. By (ii), we have $y\in f(\overline{f^{-1}(U)})$. Take an arbitrary $x\in \overline{f^{-1}(U)}$ such that $f(x)=y$. Then $x\in f^{-1}(y) \cap \overline{f^{-1}(U)}$. Therefore $ f^{-1}(y) \cap \overline{f^{-1}(U)}\neq\emptyset$.
\smallskip

(iii)$\Ra$(ii)  We show that $\overline{U}=f\big(\overline{f^{-1}(U)}\big)$ for any open $U\subseteq Y$. The inclusion $f\big(\overline{f^{-1}(U)}\big)\subseteq \overline{U}$ is clear. To prove the converse inclusion, let $y\in \overline{U}$. If $y\in U$, it is trivial that $y\in f\big(\overline{f^{-1}(U)}\big)$. If $y\in \overline{U}\SM U$, then $f^{-1}(y) \cap \overline{f^{-1}(U)}\neq\emptyset$. Take a point $x\in  f^{-1}(y) \cap \overline{f^{-1}(U)}$. Then $y=f(x)\in f\big(\overline{f^{-1}(U)}\big)$. Thus $\overline{U}\subseteq f\big(\overline{f^{-1}(U)}\big)$, and hence $\overline{U}=f\big(\overline{f^{-1}(U)}\big)$.\qed
\end{proof}

The following class will be used also to characterize weakly open-compact attainable spaces and $\kappa$-sequential spaces in the next section.

\begin{definition} \label{def:func-woca} {\em
Let $X$ and $Y$ be spaces. A surjective continuous map $f:X\to Y$ is called a {\em weakly $\kappa$-pseudo open map} if  an open set $U\subseteq Y$ is closed if and only if $f^{-1}(U)$ is closed.}
\end{definition}

Note that if $X$ is connected, then any surjective continuous map $f:X\to Y$ is a weakly  $\kappa$-pseudo open map.

It will be convenient to use the following characterizations of weakly $\kappa$-pseudo open maps.

\begin{lemma} \label{l:charac-dagger-map-weak}
Let $f:X\to Y$ be a continuous surjective map between spaces $X$ and $Y$. Then $f$ is a weakly $\kappa$-pseudo open map if, and only if, for every open non-closed set $U\subseteq Y$ there is  $y\in \overline{U}\SM U$ such that $f^{-1}(y)\cap \overline{f^{-1}(U)}\not=\emptyset$.
Consequently, each $\kappa$-pseudo open map is a weakly $\kappa$-pseudo open map.
\end{lemma}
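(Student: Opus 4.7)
The plan is to unpack the definition of weakly $\kappa$-pseudo open as a statement about the set $\overline{f^{-1}(U)}\SM f^{-1}(U)$ being nonempty, and then translate between points of $X$ and points of $Y$ via continuity of $f$.

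First I would establish the forward direction. Suppose $f$ is weakly $\kappa$-pseudo open, and let $U\subseteq Y$ be open and non-closed. The definition (combined with the fact that $f^{-1}$ always preserves closedness since $f$ is continuous) forces $f^{-1}(U)$ to be non-closed in $X$. Hence there is $x\in \overline{f^{-1}(U)}\SM f^{-1}(U)$. Setting $y:=f(x)$, continuity of $f$ yields $y\in \overline{U}$, while $x\notin f^{-1}(U)$ gives $y\notin U$. Thus $y\in\overline{U}\SM U$ and $x\in f^{-1}(y)\cap \overline{f^{-1}(U)}$, which is the desired conclusion.

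Next I would handle the backward direction, essentially reversing the above. Suppose the point-selection property holds. To verify the definition, I need only check that for every open non-closed $U\subseteq Y$, the preimage $f^{-1}(U)$ fails to be closed (the opposite implication is continuity). By hypothesis pick $y\in\overline{U}\SM U$ and $x\in f^{-1}(y)\cap \overline{f^{-1}(U)}$. Because $y\notin U$, we have $x\notin f^{-1}(U)$, so $x\in \overline{f^{-1}(U)}\SM f^{-1}(U)$, and $f^{-1}(U)$ is not closed. This gives exactly what the definition demands.

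Finally, the consequence is immediate from Lemma~\ref{l:charac-dagger-map}(iii): if $f$ is $\kappa$-pseudo open, then for every open non-closed $U\subseteq Y$ and \emph{every} $y\in\overline{U}\SM U$ we have $f^{-1}(y)\cap\overline{f^{-1}(U)}\neq\emptyset$; in particular this holds for \emph{some} such $y$, which is the weaker condition just characterized. I do not expect any serious obstacle here — the proof is essentially a change-of-variable between preimages and images using continuity, with the only mild subtlety being the observation that the definition of weakly $\kappa$-pseudo open is effectively one-directional (closedness of $U$ always forces closedness of $f^{-1}(U)$), so the equivalence with $f^{-1}$-closedness reduces to the contrapositive: $U$ open non-closed $\Longleftrightarrow$ $f^{-1}(U)$ non-closed.
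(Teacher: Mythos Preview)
Your proof is correct and follows essentially the same approach as the paper's own argument: in both directions you pass between non-closedness of $U$ and non-closedness of $f^{-1}(U)$ via a point $x\in\overline{f^{-1}(U)}\SM f^{-1}(U)$ and its image $y=f(x)$, and you deduce the final consequence from Lemma~\ref{l:charac-dagger-map}(iii) exactly as the paper does. The only cosmetic difference is that the paper phrases the backward direction as a proof by contradiction (assuming $f^{-1}(U)$ closed and $U$ non-closed), whereas you argue directly that $U$ non-closed implies $f^{-1}(U)$ non-closed; these are the same argument.
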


\begin{proof}
$(\Ra)$ Assume that $f$ is a weakly $\kappa$-pseudo open map, and let  $U\subseteq Y$ be an open non-closed set. Then the open set $f^{-1}(U)$ is also not closed.  Take $x\in \overline{f^{-1}(U)}\SM f^{-1}(U)$ and set $y:=f(x)$. Then $y\not\in U$, $y\in f\big(\overline{f^{-1}(U)}\big)\subseteq \overline{U}$ and $x\in f^{-1}(y)\cap \overline{f^{-1}(U)}$.

$(\Leftarrow)$ Let $U\subseteq Y$ be open. If $U$ is closed, then clearly $f^{-1}(U)$ is also closed. Conversely, assume that $f^{-1}(U)$ is closed and suppose for a contradiction that $U$ is not closed. Then, by assumption, there is $y\in \overline{U}\SM U$ such that $f^{-1}(y)\cap \overline{f^{-1}(U)}\not=\emptyset$. Since $f^{-1}(U)$ is closed, it follows that  $f^{-1}(y)\cap f^{-1}(U)\not=\emptyset$ and hence $y\in f\big(f^{-1}(U)\big)=U$, a contradiction. Thus $U$ must be closed.
\smallskip

The last assertion follows from Lemma \ref{l:charac-dagger-map}.\qed
\end{proof}

Proposition 3.2 of \cite{LiL} states that the composition of a pseudo-open mapping and a $\kappa$-pseudo-open mapping is $\kappa$-pseudo-open. Below we strengthen this result.

\begin{proposition} \label{p:composition-dagger}
Let $f:X\to Y$ and $g:Y\to Z$ be continuous maps. If $f$ and $g$ are (weakly) $\kappa$-pseudo open, then also $g\circ f$ is a  (weakly) $\kappa$-pseudo open map.
\end{proposition}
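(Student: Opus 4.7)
My plan is to handle both cases by leveraging the characterizations in Lemmas \ref{l:charac-dagger-map} and \ref{l:charac-dagger-map-weak}, which reduce each notion to a clean closure-preservation statement and make two-step composition arguments almost mechanical.

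For the $\kappa$-pseudo open case, I would use condition (ii) of Lemma \ref{l:charac-dagger-map}, namely $\overline{U}=h\big(\overline{h^{-1}(U)}\big)$ for every open $U$ in the codomain. Fix an open set $U\subseteq Z$. Applying (ii) to $g$ gives $\overline{U}=g\big(\overline{g^{-1}(U)}\big)$. Since $g$ is continuous, $g^{-1}(U)$ is open in $Y$, so applying (ii) to $f$ yields $\overline{g^{-1}(U)}=f\big(\overline{f^{-1}(g^{-1}(U))}\big)=f\big(\overline{(g\circ f)^{-1}(U)}\big)$. Composing these two identities gives $\overline{U}=(g\circ f)\big(\overline{(g\circ f)^{-1}(U)}\big)$, and then (ii)$\Ra$(i) of Lemma \ref{l:charac-dagger-map} finishes the $\kappa$-pseudo open case.

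For the weakly $\kappa$-pseudo open case, the most transparent route is Definition \ref{def:func-woca} itself. Let $U\subseteq Z$ be open; I want to show that $U$ is closed if and only if $(g\circ f)^{-1}(U)$ is closed. The implication ``$U$ closed $\Ra$ $(g\circ f)^{-1}(U)$ closed'' follows from the continuity of $g\circ f$. For the converse, I would argue contrapositively: if $U$ is open and not closed, then $g^{-1}(U)$ is open and not closed in $Y$ by the weak $\kappa$-pseudo openness of $g$, and then $f^{-1}(g^{-1}(U))=(g\circ f)^{-1}(U)$ is not closed in $X$ by the weak $\kappa$-pseudo openness of $f$. Alternatively, one can chase a witness: using Lemma \ref{l:charac-dagger-map-weak}, for $U$ open non-closed pick $z\in\overline{U}\SM U$ and $y\in g^{-1}(z)\cap\overline{g^{-1}(U)}$; observe $y\notin g^{-1}(U)$, then apply Lemma \ref{l:charac-dagger-map-weak} to $f$ and the open non-closed set $g^{-1}(U)$ to obtain $y'\in\overline{g^{-1}(U)}\SM g^{-1}(U)$ and $x\in f^{-1}(y')\cap\overline{(g\circ f)^{-1}(U)}$, and verify that $z':=g(y')\in\overline{U}\SM U$ witnesses the required property for $g\circ f$.

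I do not anticipate any real obstacle. The only small thing to be careful about is making sure the intermediate set $g^{-1}(U)$ is genuinely open and non-closed in $Y$ before feeding it into the hypothesis on $f$; in the weak case this uses the fact that $y\in\overline{g^{-1}(U)}\SM g^{-1}(U)$ exists, which prevents $g^{-1}(U)$ from being closed. Both arguments are short and symmetric, and the last sentence of Lemma \ref{l:charac-dagger-map-weak} (that every $\kappa$-pseudo open map is weakly $\kappa$-pseudo open) confirms consistency between the two cases.
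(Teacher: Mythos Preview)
Your proposal is correct and essentially identical to the paper's proof: both cases use exactly the characterizations you name, with the $\kappa$-pseudo open case composing the closure identity $\overline{U}=h\big(\overline{h^{-1}(U)}\big)$ from Lemma~\ref{l:charac-dagger-map}(ii), and the weak case applying Definition~\ref{def:func-woca} twice (the paper argues directly that $(g\circ f)^{-1}(U)$ closed $\Ra$ $g^{-1}(U)$ closed $\Ra$ $U$ closed, which is just your contrapositive). Your optional witness-chasing alternative via Lemma~\ref{l:charac-dagger-map-weak} is also fine but unnecessary.
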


\begin{proof}
Let $U\subseteq Z$ be an open set. Set $V:=g^{-1}(U)$.

Assume that $f$ and $g$ are $\kappa$-pseudo open. We will use $(1)\LRa(2)$ in Lemma \ref{l:charac-dagger-map}. Since $g$ and $f$ are $\kappa$-pseudo open, $\overline{U}=g(\overline{V})$ and $\overline{V}=f\big(\overline{f^{-1}(V)}\big)$. Therefore, $\overline{U}=(g\circ f)\big(\overline{(g\circ f)^{-1}(U)}\big)$. Since $U$ was arbitrary it follows that $g\circ f$ is $\kappa$-pseudo open.

Assume that $f$ and $g$ are weakly $\kappa$-pseudo open. We shall use Lemma \ref{l:charac-dagger-map-weak}. Assume that $(g\circ f)^{-1}(U)$ is closed. Since $f$ is weakly $\kappa$-pseudo open and $f^{-1}(V)=(g\circ f)^{-1}(U)$ is closed, we obtain that $V$ is closed. As $g$ is weakly $\kappa$-pseudo open and $g^{-1}(U)=V$  is closed, $U$ is closed. Thus $g\circ f$ is weakly $\kappa$-pseudo open map. \qed
\end{proof}

Recall that a continuous mapping $p:X\to Y$ is called {\em $R$-quotient} ({\em real-quotient}) if $p$ is continuous, onto, and every function $\phi:Y\to\IR$ is continuous whenever the composition $\phi\circ p$ is continuous. Clearly, every quotient mapping as well as each surjective open mapping is $R$-quotient.

\begin{lemma} \label{l:exa-dagger-map}
Let $f:X\to Y$  be a surjective continuous map between spaces $X$ and $Y$.
\begin{enumerate}
\item[{\rm(i)}] If $f$ is $R$-quotient, then $f$ is a weakly $\kappa$-pseudo open  map.
\item[{\rm(ii)}] If $f$ is pseudo open or open, then $f$ is a  $\kappa$-pseudo open map.
\end{enumerate}
\end{lemma}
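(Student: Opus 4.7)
The plan is to treat the two parts separately, since they rest on quite different observations. Both will reduce to short, direct arguments, so I do not anticipate any serious obstacle; the main content is matching the right tool to each definition.

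For part (i), I would use the characteristic-function characterisation of clopen sets together with the defining property of an $R$-quotient map. Let $U\subseteq Y$ be open. One direction is trivial: if $U$ is closed, then $f^{-1}(U)$ is closed by continuity of $f$. For the nontrivial direction, suppose $f^{-1}(U)$ is closed. Since $U$ is open, $f^{-1}(U)$ is in fact clopen in $X$, so its characteristic function $\chi_{f^{-1}(U)}\colon X\to\IR$ is continuous. Observe that $\chi_{f^{-1}(U)}=\chi_U\circ f$. Since $f$ is $R$-quotient, this forces $\chi_U\colon Y\to\IR$ to be continuous, so $U=\chi_U^{-1}\bigl((\tfrac12,\tfrac32)\bigr)$ is clopen, and in particular closed. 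Hence $f$ satisfies Definition \ref{def:func-woca} and is weakly $\kappa$-pseudo open.

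For part (ii), I would first reduce the ``open'' case to the ``pseudo open'' case by noting that any open surjection is pseudo open: if $f^{-1}(y)\subseteq U$ with $U$ open, then $f(U)$ is an open neighbourhood of $y$ (it contains $y=f(x)$ for any $x\in f^{-1}(y)\subseteq U$), so $y\in\Int(f(U))$. With this reduction in hand, assume $f$ is pseudo open. If $f^{-1}(y)\subseteq U$ with $U\subseteq X$ open, then by definition $y\in \Int(f(U))$, and since $f(U)\subseteq \overline{f(U)}$ we have $\Int(f(U))\subseteq \Int\bigl(\overline{f(U)}\bigr)$, giving $y\in \Int\bigl(\overline{f(U)}\bigr)$. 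Thus $f$ is $\kappa$-pseudo open.

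Neither step presents a genuine difficulty: part (i) is a clean one-line application of the characteristic-function trick, using only that a clopen indicator is continuous and that $R$-quotient maps reflect continuity of real-valued functions; part (ii) is essentially bookkeeping with the definitions, exploiting the inclusion $f(U)\subseteq \overline{f(U)}$ to weaken pseudo-openness to $\kappa$-pseudo-openness. The closest thing to a subtlety is remembering, in part (i), that openness of $U$ is what promotes $f^{-1}(U)$ from merely closed to clopen, which is what makes the indicator trick go through.
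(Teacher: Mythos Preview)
Your proof is correct and follows essentially the same approach as the paper: for (i) both you and the paper use the characteristic-function trick (the paper phrases it as a contradiction, you as a direct implication, but the core idea is identical), and for (ii) the paper simply writes ``obvious'' while you spell out the details correctly.
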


\begin{proof}
(i) Let $U$ be an open non-closed subset of $Y$. We show that $W:=f^{-1}(U)$ is an open non-closed subset of $X$. Indeed, suppose for a contradiction that $W$ is also closed in $X$. Consider the characteristic function $\mathbf{1}_U$ of $U$ on $Y$. Clearly, $\mathbf{1}_U$ is discontinuous. Since $p$ is $R$-quotient, $\mathbf{1}_U\circ p$ is discontinuous as well. However, since $W$ is clopen, the function $\mathbf{1}_U\circ p$ is continuous, a contradiction. Thus $W$ is not closed. Thus $f$ is a weakly  $\kappa$-pseudo open map.

(ii) is obvious.\qed
\end{proof}

\begin{proposition}  \label{p:quot-oca}
Let $f:X\to Y$ be a \dag-map between spaces $X$ and $Y$.
\begin{enumerate}
\item[{\rm(i)}] If $X$ is open-compact attainable, then so is $Y$.
\item[{\rm(ii)}]  If $X$ is $\kappa$-Fr\'{e}chet--Urysohn, then so is $Y$.
\end{enumerate}
\end{proposition}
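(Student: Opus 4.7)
The plan is to use Lemma~\ref{l:charac-dagger-map} as the main lifting tool. Recall that the $\kappa$-pseudo openness of $f$ is equivalent to the identity $\overline{U}=f\big(\overline{f^{-1}(U)}\big)$ for every open $U\subseteq Y$, and in particular to the condition that $f^{-1}(y)\cap \overline{f^{-1}(U)}\neq\emptyset$ whenever $U$ is open in $Y$ and $y\in\overline{U}\SM U$. Both items follow the same three-step pattern: lift the attainment problem from $Y$ back into $X$ through $f$, apply the hypothesis on $X$ inside the open preimage $V:=f^{-1}(U)$, and push the resulting witness forward by $f$ using continuity.

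For (i), I start with an open set $U\subseteq Y$ and a point $y\in\overline{U}$. If $y\in U$, then $K=\{y\}$ suffices, so I may assume $y\in\overline{U}\SM U$. By Lemma~\ref{l:charac-dagger-map}(iii) I choose $x\in f^{-1}(y)\cap\overline{V}$, where $V=f^{-1}(U)$ is open in $X$. Since $X$ is open-compact attainable, there exists a compact $L\subseteq X$ with $x\in L\cap\overline{L\cap V}$. I then set $K:=f(L)$, which is compact in $Y$ and contains $y$. Because $f(L\cap V)\subseteq f(L)\cap f\big(f^{-1}(U)\big)\subseteq K\cap U$, continuity of $f$ yields
\[
y=f(x)\in f\big(\overline{L\cap V}\big)\subseteq \overline{f(L\cap V)}\subseteq \overline{K\cap U},
\]
so $y\in K\cap\overline{K\cap U}$, as required.

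For (ii), the lifting step is identical: choose $x\in f^{-1}(y)\cap\overline{V}$. The $\kappa$-Fr\'{e}chet--Urysohn property of $X$ then supplies a sequence $\{x_n\}_{n\in\w}\subseteq V$ converging to $x$. Its image $\{f(x_n)\}_{n\in\w}$ lies in $U$ and converges to $y=f(x)$ by continuity, showing that $Y$ is $\kappa$-Fr\'{e}chet--Urysohn.

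I do not expect a genuine obstacle. The only conceptual point is that the passage from a boundary point $y\in\overline{U}\SM U$ in $Y$ to a boundary point $x$ of $V=f^{-1}(U)$ in $X$ is \emph{not} automatic for an arbitrary continuous surjection; it is precisely this lifting that the $\kappa$-pseudo open hypothesis encodes through Lemma~\ref{l:charac-dagger-map}(iii). Once that step is available, the remainder is a routine image-under-continuous-map argument in each case.
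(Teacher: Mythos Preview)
Your proof is correct and follows essentially the same approach as the paper: lift $y\in\overline{U}\SM U$ to a point $x\in f^{-1}(y)\cap\overline{f^{-1}(U)}$ via Lemma~\ref{l:charac-dagger-map}(iii), apply the hypothesis on $X$ to the open set $f^{-1}(U)$, and push the witness forward by continuity. The only cosmetic difference is that for (i) the paper uses the equivalent $(\star)$-formulation (a relatively compact $A\subseteq f^{-1}(U)$ with $x\in\overline{A}$) while you work directly with a compact $L$ satisfying $x\in L\cap\overline{L\cap V}$; both lead to the same image argument.
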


\begin{proof}
Let $U$ be an open non-closed subset of $Y$, and let $y\in \overline{U}\SM U$. Since $f$ is a  $\kappa$-pseudo open map, Lemma \ref{l:charac-dagger-map} implies that $f^{-1}(y)\cap \overline{f^{-1}(U)}\not=\emptyset$. Set $W:=f^{-1}(U)$. Choose a point $z\in f^{-1}(y)\cap \overline{W}$. Then  $f(z)=y$ and  $z\in\overline{W}\SM W$.
\smallskip

(i) Since $X$ is open-compact attainable, there is a subset $A$ of $W$ such that $z\in \overline{A}$ and $\overline{A}$ is a compact subset of $X$. It is clear that $K:=f(\overline{A})$ is a compact subset of $Y$ such that $y\in K\cap \overline{K\cap U}$. Thus $Y$ is an open-compact attainable space.\qed
\smallskip

(ii)  Since $X$ is  $\kappa$-Fr\'{e}chet--Urysohn, there is a sequence $\{x_n\}_{n\in\w}\subseteq W$ which converges to $z$. Then the sequence $\{f(x_n)\}_{n\in\w}\subseteq U$ converges to $y$. Thus $Y$ is a  $\kappa$-Fr\'{e}chet--Urysohn space.\qed
\end{proof}

\begin{proposition}  \label{p:quotient-woca}
Let $f:X\to Y$ be a weakly  $\kappa$-pseudo open map between spaces $X$ and $Y$.
\begin{enumerate}
\item[{\rm(i)}] If $X$ is weakly open-compact attainable, then so is $Y$.
\item[{\rm(ii)}]  If $X$ is $\kappa$-sequential, then so is $Y$.
\end{enumerate}
\end{proposition}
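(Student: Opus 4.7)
The plan is to mirror Proposition \ref{p:quot-oca}, replacing the characterization of $\kappa$-pseudo open maps (Lemma \ref{l:charac-dagger-map}) with the defining property of weakly $\kappa$-pseudo open maps from Definition \ref{def:func-woca}: an open set $U\subseteq Y$ is closed if and only if $f^{-1}(U)$ is closed. In both (i) and (ii) I will start with an open non-closed set $U\subseteq Y$, pull it back to $W := f^{-1}(U)$, observe that $W$ is open (by continuity of $f$) and non-closed (by the weakly $\kappa$-pseudo open property), extract the required witnessing data inside $X$ from the compact-type hypothesis, and then push it forward through $f$.

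For (i), once I have $W$ open and non-closed in $X$, weak open-compact attainability of $X$ supplies a point $z \in \overline{W}\SM W$ and a compact $K\subseteq X$ with $z\in K\cap\overline{K\cap W}$. I then set $y := f(z)$ and $K' := f(K)$. The verifications that $y\in\overline{U}\SM U$ and $y\in K'\cap\overline{K'\cap U}$ reduce to continuity of $f$ together with the routine inclusions $f(\overline W)\subseteq \overline{f(W)}\subseteq\overline U$ and $f(K\cap W)\subseteq K'\cap U$; the point $y$ cannot lie in $U$ precisely because $z\notin W$ forbids it, and compactness of $K'$ is immediate from compactness of $K$.

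For (ii), the same setup produces an open non-closed $W\subseteq X$, whence $\kappa$-sequentiality of $X$ yields a sequence $\{x_n\}_{n\in\w}\subseteq W$ converging to some $z\in\overline W\SM W$. Continuity of $f$ then makes $\{f(x_n)\}_{n\in\w}\subseteq U$ a sequence converging to $y := f(z)$, and the same argument as in (i) shows $y\in\overline U\SM U$, so $Y$ is $\kappa$-sequential.

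I do not anticipate a real obstacle: the argument is a direct continuity-plus-surjectivity transport once the key lifting (non-closed open sets in $Y$ have non-closed preimages in $X$) is in place. The one conceptual point worth flagging is that, compared with the $\kappa$-pseudo open case of Proposition \ref{p:quot-oca}, we lose the ability to prescribe the boundary point $y\in\overline U\SM U$ in advance; we only obtain \emph{some} such $y$. However, this is exactly the form in which Definition \ref{def:oca} quantifies over boundary points for weakly open-compact attainable and $\kappa$-sequential spaces, so the weaker control is precisely what the conclusion requires.
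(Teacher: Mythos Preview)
Your proposal is correct and follows essentially the same approach as the paper's proof: pull back an open non-closed $U\subseteq Y$ to $W=f^{-1}(U)$, use the weakly $\kappa$-pseudo open property to ensure $W$ is not closed, apply the hypothesis on $X$ to obtain the witnessing point $z$ together with a compact set (resp.\ convergent sequence), and push forward via $f$. The only cosmetic difference is that the paper invokes Lemma~\ref{l:charac-dagger-map-weak} to conclude $\overline{W}\setminus W\neq\emptyset$, whereas you appeal directly to Definition~\ref{def:func-woca}; these are equivalent and your justification that $y=f(z)\notin U$ (since $z\notin W=f^{-1}(U)$) is exactly right.
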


\begin{proof}
Let $U$ be an open non-closed subset of $Y$. Set $W:=f^{-1}(U)$. Since $f$ is a weakly  $\kappa$-pseudo open map, we apply Lemma \ref{l:charac-dagger-map-weak} to find  $y'\in \overline{U}\SM U$ such that $f^{-1}(y')\cap \overline{W}\not=\emptyset$. Therefore $\overline{W}\SM W\not=\emptyset$.
\smallskip

Since $W$ is not closed and $X$ is weakly open-compact attainable (resp., $\kappa$-sequential), there is a point $z\in\overline{W}\SM W$ and a compact subset $K_1$ of $X$ such that $z\in K_1\cap \overline{K_1\cap W}$  (resp., a sequence $K_1=\{x_n\}_{n\in\w}\subseteq W$ converging to $z$). Set $y:=f(z)$ and $K:=f(K_1)$. Then $y\in \overline{U}\SM U$ and $K$ are such that $y\in K\cap \overline{K\cap U}$ (resp., $f(x_n)\to y$). Thus $Y$ is a weakly open-compact attainable space (resp., a $\kappa$-sequential space).\qed
\end{proof}

Since quotient maps are open, Lemma \ref{l:exa-dagger-map} and Propositions \ref{p:quot-oca} and \ref{p:quotient-woca} imply the next corollary.
\begin{corollary} \label{c:quotient-woca-group}
Let $H$ be a quotient group of a topological group $G$. If $G$ is open-compact attainable {\rm(}weakly open-compact attainable,  $\kappa$-sequential or $\kappa$-Fr\'{e}chet--Urysohn{\rm)}, then so is $H$.
\end{corollary}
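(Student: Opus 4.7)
The plan is the obvious one suggested by the preceding development: exhibit the canonical projection $q\colon G\to H$ as a map of the appropriate $\kappa$-pseudo open type, and then invoke Propositions \ref{p:quot-oca} and \ref{p:quotient-woca} directly.

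First I would recall the standard fact that the canonical projection $q\colon G\to H$ onto a quotient topological group is continuous, surjective, and open. Indeed, if $N=\ker q$ is the closed normal subgroup of $G$ giving rise to $H$, then for every open $U\subseteq G$ one has $q^{-1}(q(U))=UN=\bigcup_{n\in N}Un$, which is open in $G$; hence $q(U)$ is open in $H$ by the definition of the quotient topology. Applying Lemma \ref{l:exa-dagger-map}(ii) to this surjective open map $q$ gives that $q$ is a $\kappa$-pseudo open map, and then the last assertion of Lemma \ref{l:charac-dagger-map-weak} shows that $q$ is in particular a weakly $\kappa$-pseudo open map. Thus $q$ simultaneously belongs to the two classes of maps under which our four properties have already been shown to propagate to the image.

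With this in hand, the proof splits into cases. If $G$ is open-compact attainable or $\kappa$-Fr\'{e}chet--Urysohn, then Proposition \ref{p:quot-oca}(i) or (ii), applied to the $\kappa$-pseudo open surjection $q$, immediately gives the corresponding property for $H=q(G)$. If instead $G$ is weakly open-compact attainable or $\kappa$-sequential, then Proposition \ref{p:quotient-woca}(i) or (ii), applied to the weakly $\kappa$-pseudo open surjection $q$, yields the corresponding property for $H$. There is no genuine obstacle here: the corollary is a purely formal consequence of Lemma \ref{l:exa-dagger-map}, Propositions \ref{p:quot-oca} and \ref{p:quotient-woca}, together with the elementary observation that quotient homomorphisms of topological groups are open.
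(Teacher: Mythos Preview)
Your proposal is correct and follows exactly the approach the paper takes: the paper's one-line justification is that the quotient homomorphism is open, so Lemma~\ref{l:exa-dagger-map}(ii) makes it $\kappa$-pseudo open (hence also weakly $\kappa$-pseudo open), and Propositions~\ref{p:quot-oca} and~\ref{p:quotient-woca} finish the argument. You have simply spelled out the details.
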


\section{Characterization of (weakly) open-compact attainable and $\kappa$-sequential spaces} \label{sec:character-oca}



In this section we characterize (weakly) open-compact attainable spaces, $\kappa$-sequential spaces and $\kappa$-Fr\'{e}chet--Urysohn spaces using  $\kappa$-pseudo open maps and weakly $\kappa$-pseudo open maps introduced in Definition \ref{def:func-woca}.

We denote by $\KK(X)$ the family of all compact subsets of a space $X$.

\begin{theorem} \label{t:charact-oca}
For a space $X$, the following assertions are equivalent:
\begin{enumerate}
\item[{\rm(i)}] $X$ is an open-compact  attainable space;
\item[{\rm(ii)}] the natural inclusion map $I: \bigoplus\KK(X)\to X$ is a  $\kappa$-pseudo open  map;
\item[{\rm(iii)}] $X$ is a  $\kappa$-pseudo open map image of some locally compact space.
\end{enumerate}
\end{theorem}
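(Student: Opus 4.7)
The plan is to use the characterization of $\kappa$-pseudo open maps provided by Lemma \ref{l:charac-dagger-map}, most conveniently condition (iii) there, which says that $f^{-1}(y)\cap \overline{f^{-1}(U)}\ne\emptyset$ for every open non-closed $U$ and every $y\in \overline{U}\setminus U$. I would first observe that (ii)$\Rightarrow$(iii) is immediate: the topological sum $\bigoplus\KK(X)$ is locally compact, being a disjoint union of compact (hence locally compact) summands.

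For (i)$\Rightarrow$(ii), I would fix an open non-closed $U\subseteq X$ and a point $y\in \overline{U}\setminus U$. By the open-compact attainability of $X$, there exists $K\in \KK(X)$ with $y\in K\cap \overline{K\cap U}$. Write $\iota_K:K\hookrightarrow \bigoplus\KK(X)$ for the canonical embedding of the $K$-summand; then $\iota_K(y)\in I^{-1}(y)$ and, since $K$ is closed in $X$ so the closure of $K\cap U$ inside $K$ coincides with $\overline{K\cap U}$ and is fully contained in the $K$-summand, $\iota_K(y)\in \overline{\iota_K(K\cap U)}\subseteq \overline{I^{-1}(U)}$. This gives a point of $I^{-1}(y)\cap \overline{I^{-1}(U)}$, hence $I$ is $\kappa$-pseudo open by Lemma \ref{l:charac-dagger-map}.

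For (iii)$\Rightarrow$(i), let $f:Z\to X$ be a $\kappa$-pseudo open surjection from a locally compact space $Z$. Given an open $W\subseteq X$ and $z\in \overline{W}$, the case $z\in W$ is trivial via $K=\{z\}$. If $z\in \overline{W}\setminus W$, apply condition (iii) of Lemma \ref{l:charac-dagger-map} to produce $x\in f^{-1}(z)\cap \overline{f^{-1}(W)}$. By local compactness of $Z$, choose an open neighborhood $V$ of $x$ with $\overline{V}$ compact. Since $V$ meets every neighborhood base element of $x$ and $x\in \overline{f^{-1}(W)}$, we still have $x\in \overline{V\cap f^{-1}(W)}$. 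Setting $K:=f(\overline{V})$ yields a compact subset of $X$ with $z=f(x)\in K$ and
\[
z\in f\bigl(\overline{V\cap f^{-1}(W)}\bigr)\subseteq \overline{f\bigl(V\cap f^{-1}(W)\bigr)}\subseteq \overline{K\cap W},
\]
so $z\in K\cap \overline{K\cap W}$, as required.

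The main technical subtlety, and the place I would be most careful, is the closure computation in the topological sum in (i)$\Rightarrow$(ii): one must verify that the closure of the lift of $K\cap U$ into the $K$-summand really recovers the closure of $K\cap U$ in $X$ (via the canonical embedding), which relies on $K$ being closed in $X$ so that $\overline{K\cap U}\subseteq K$. Everything else is bookkeeping around Lemma \ref{l:charac-dagger-map}.
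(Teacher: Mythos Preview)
Your proof is correct and follows essentially the same route as the paper: both use Lemma~\ref{l:charac-dagger-map}(iii) for (i)$\Rightarrow$(ii), and the closure computation in the $K$-summand is handled the same way. The only difference is in (iii)$\Rightarrow$(i): the paper simply cites Proposition~\ref{p:wOCA-k-space}(iii) (locally compact $\Rightarrow$ $k'$-space $\Rightarrow$ open-compact attainable) together with Proposition~\ref{p:quot-oca}(i) (open-compact attainability is preserved under $\kappa$-pseudo open images), whereas you inline these two facts into a single direct argument using a relatively compact neighborhood $V$ and pushing forward --- but the mathematical content is identical.
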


\begin{proof}
(i)$\Ra$(ii) Assume that $X$ is open-compact  attainable. Consider the identity mapping
\[
I:\bigoplus_{K\in \KK(X)} K \to X
\]
We show that $I$ is a  $\kappa$-pseudo open map. Let $U\subseteq X$ be an open non-closed set, and let  $x\in \overline{U}\SM U$. Since $X$ is open-compact  attainable, there is $K\in\KK(X)$ such that $x\in K\cap \overline{U\cap K}$. Set $z:=K\cap I^{-1}(x)$. It remains to prove that $z\in I^{-1}(x)\cap \overline{I^{-1}(U)}$.
To this end, we note that $I{\restriction}_K$ is the identity map and hence
\[
\overline{I^{-1}(U)}\supseteq K\cap\overline{I^{-1}(U\cap K)}=K\cap I^{-1} \big( \overline{U\cap K}\big)=K\cap I^{-1} \big( K\cap\overline{U\cap K}\big).
\]
Therefore $z\in \overline{I^{-1}(U)}$ and hence $z\in I^{-1}(x)\cap \overline{I^{-1}(U)}$. Thus $I$ is  a \dag-map.
\smallskip

The implications (ii)$\Ra$(iii) is trivial. 
\smallskip

(iii)$\Ra$(i) follows from (i) of Proposition \ref{p:quot-oca} and (iii) of Proposition \ref{p:wOCA-k-space}. \qed 
\end{proof}


Similar to Theorem \ref{t:charact-oca}, below we characterize weakly open-compact  attainable spaces.
\begin{theorem} \label{t:character-woca}
For a space $X$, the following assertions are equivalent:
\begin{enumerate}
\item[{\rm(i)}] $X$ is a weakly open-compact  attainable space;
\item[{\rm(ii)}] the natural inclusion map $I: \bigoplus\KK(X)\to X$ is a weakly  $\kappa$-pseudo open map;
\item[{\rm(iii)}] $X$ is a weakly  $\kappa$-pseudo open map image of some locally compact space;
\item[{\rm(iv)}] $X$ is a  weakly  $\kappa$-pseudo open map image of some  $k$-space;
\item[{\rm(v)}] $X$ is a  weakly   $\kappa$-pseudo open map image of some $k_\IR$-space.
\end{enumerate}
\end{theorem}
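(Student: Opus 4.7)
The plan is to establish the cycle (i) $\Ra$ (ii) $\Ra$ (iii) $\Ra$ (iv) $\Ra$ (v) $\Ra$ (i), following the template of Theorem \ref{t:charact-oca} but replacing the equality-of-closures criterion in Lemma \ref{l:charac-dagger-map} by the weaker ``some point in $\overline{U}\SM U$'' criterion from Lemma \ref{l:charac-dagger-map-weak}. The three middle implications are essentially free: $\bigoplus\KK(X)$ is locally compact, every locally compact space is a $k$-space, and every $k$-space is a $k_\IR$-space.

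For (i) $\Ra$ (ii), the idea is a direct transcription of the argument used for (i) $\Ra$ (ii) in Theorem \ref{t:charact-oca}. Let $U\subseteq X$ be an open non-closed set. Since $X$ is weakly open-compact attainable, there exist $z\in \overline{U}\SM U$ and $K\in\KK(X)$ with $z\in K\cap \overline{K\cap U}$. Denote by $z_K$ the copy of $z$ lying in the summand $K$ of $\bigoplus\KK(X)$. Because $I$ restricted to the clopen summand $K$ is just the identity embedding into $X$, we obtain the chain
\[
\overline{I^{-1}(U)}\supseteq K\cap\overline{I^{-1}(U)\cap K}=K\cap I^{-1}\bigl(\overline{U\cap K}\bigr)\ni z_K,
\]
so that $z_K\in I^{-1}(z)\cap \overline{I^{-1}(U)}$. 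By Lemma \ref{l:charac-dagger-map-weak} this is exactly what is needed for $I$ to be weakly $\kappa$-pseudo open.

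For (v) $\Ra$ (i), the closing implication, I would combine two facts already in the paper: Proposition \ref{p:wOCA-k-space}(i) says every $k_\IR$-space is weakly open-compact attainable, and Proposition \ref{p:quotient-woca}(i) says this property transfers along weakly $\kappa$-pseudo open surjections. Composing these gives (i) at once.

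I do not anticipate any genuine obstacle here; the only subtle point is the verification in (i) $\Ra$ (ii) that $z_K$ really lies in $\overline{I^{-1}(U)}$, where one must be careful that $I^{-1}(U)$ is a disjoint union over all of $\KK(X)$ but the closure of its $K$-slice still contains $z_K$ because $I\restriction_K$ is a homeomorphism onto its image. Once this is articulated cleanly the rest of the theorem follows mechanically from previously proved results.
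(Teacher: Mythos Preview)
Your proposal is correct and follows essentially the same argument as the paper: the same cycle (i)$\Ra$(ii)$\Ra$(iii)$\Ra$(iv)$\Ra$(v)$\Ra$(i), the same verification that the copy of $z$ in the $K$-summand lies in $I^{-1}(z)\cap\overline{I^{-1}(U)}$ via Lemma~\ref{l:charac-dagger-map-weak}, and the same closing appeal to Proposition~\ref{p:wOCA-k-space}(i) together with Proposition~\ref{p:quotient-woca}(i). Your remark about the only subtle point is apt and matches precisely where the paper spells out the chain of inclusions.
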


\begin{proof}
(i)$\Ra$(ii) Assume that $X$ is weakly open-compact  attainable. Consider the identity mapping
\[
I:\bigoplus_{K\in \KK(X)} K \to X
\]
We show that $I$ is a weakly  $\kappa$-pseudo open map. Let $U\subseteq X$ be an open non-closed set.  Since $X$ is weakly open-compact  attainable, there are a point $x\in \overline{U}\SM U$ and  $K\in\KK(X)$ such that $x\in K\cap \overline{U\cap K}$. Set $z:=K\cap I^{-1}(x)$. It remains to prove that $z\in I^{-1}(x)\cap \overline{I^{-1}(U)}$. To this end, we note that $I{\restriction}_K$ is the identity map and hence
\[
\overline{I^{-1}(U)}\supseteq K\cap\overline{I^{-1}(U\cap K)}=K\cap I^{-1} \big( \overline{U\cap K}\big)=K\cap I^{-1} \big( K\cap\overline{U\cap K}\big).
\]
Therefore $z\in \overline{I^{-1}(U)}$ and hence $z\in I^{-1}(x)\cap \overline{I^{-1}(U)}$. Thus $I$ is a weakly  $\kappa$-pseudo open map.
\smallskip

The implications (ii)$\Ra$(iii)$\Ra$(iv)$\Ra$(v) are trivial.
\smallskip

(v)$\Ra$(i) follows from (i) of Proposition \ref{p:quotient-woca} and  (i) of Proposition \ref{p:wOCA-k-space}.\qed
\end{proof}

Now we characterize $\kappa$-sequential spaces. For a space $X$, we denote by $\mathcal{S}(X)$ the family of all convergent sequences with limit point.

\begin{theorem} \label{t:character-kappa-seq}
For a space $X$, the following assertions are equivalent:
\begin{enumerate}
\item[{\rm(i)}] $X$ is a $\kappa$-sequential space;
\item[{\rm(ii)}] the natural inclusion map $I: \bigoplus\mathcal{S}(X)\to X$ is a weakly  $\kappa$-pseudo open map;
\item[{\rm(iii)}] $X$ is a weakly  $\kappa$-pseudo open map image of some metrizable locally compact space;
\item[{\rm(iv)}] $X$ is a  weakly  $\kappa$-pseudo open map image of some  sequential space;
\item[{\rm(v)}] $X$ is a  weakly   $\kappa$-pseudo open map image of some $s_\IR$-space.
\end{enumerate}
\end{theorem}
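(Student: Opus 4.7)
The plan is to follow the template used for Theorems \ref{t:charact-oca} and \ref{t:character-woca}, proving the implications in the cycle (i)$\Ra$(ii)$\Ra$(iii)$\Ra$(iv)$\Ra$(v)$\Ra$(i). The only step with real content is (i)$\Ra$(ii); the rest are either standard inclusions between classes of spaces or invocations of already-proved results.

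For (i)$\Ra$(ii), I would take any open non-closed $U\subseteq X$ and use the $\kappa$-sequentiality of $X$ to produce a point $z\in \overline{U}\SM U$ together with a sequence $\{x_n\}_{n\in\w}\subseteq U$ with $x_n\to z$. Then $S:=\{x_n:n\in\w\}\cup\{z\}$ is a convergent sequence with its limit, so $S\in\mathcal{S}(X)$ and appears as one of the summands of $\bigoplus\mathcal{S}(X)$. Write $\widetilde{z}$ and $\widetilde{x}_n$ for the copies of $z$ and $x_n$ inside this summand. Since $I$ restricted to this summand is the identity, we have $\widetilde{x}_n\to\widetilde{z}$, hence $\widetilde{z}\in \overline{I^{-1}(U)}$, while $\widetilde{z}\notin I^{-1}(U)$ because $z\notin U$. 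Therefore $I^{-1}(z)\cap \overline{I^{-1}(U)}\neq\emptyset$, and Lemma \ref{l:charac-dagger-map-weak} gives that $I$ is a weakly $\kappa$-pseudo open map.

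For (ii)$\Ra$(iii), I would note that each $S\in\mathcal{S}(X)$ is either eventually constant (hence a finite discrete set) or homeomorphic to $\w+1$; in either case $S$ is compact and metrizable. Consequently $\bigoplus\mathcal{S}(X)$ is a topological direct sum of compact metrizable spaces and is therefore a metrizable locally compact space, making (ii) a special case of (iii). The implications (iii)$\Ra$(iv) and (iv)$\Ra$(v) are immediate from the standard inclusions: metrizable (in particular first-countable) spaces are sequential, and sequential spaces are $s_\IR$-spaces.

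The closing implication (v)$\Ra$(i) follows by composing two results already in hand: by (ii) of Proposition \ref{p:wOCA-k-space} the domain space is $\kappa$-sequential, and by (ii) of Proposition \ref{p:quotient-woca} the property of being $\kappa$-sequential is preserved under images of weakly $\kappa$-pseudo open maps. Since every step relies either on definitions or on infrastructure already built in Sections \ref{sec:RE}--\ref{sec:maps}, I do not anticipate a genuine obstacle; the one place demanding care is the bookkeeping of the copies $\widetilde{x}_n,\widetilde{z}$ in the summand of $\bigoplus\mathcal{S}(X)$, which must be done carefully so that the closure argument runs entirely inside that single summand rather than in $X$ itself.
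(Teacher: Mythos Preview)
Your proposal is correct and follows essentially the same route as the paper's own proof: the cycle (i)$\Ra$(ii)$\Ra$(iii)$\Ra$(iv)$\Ra$(v)$\Ra$(i), with (i)$\Ra$(ii) handled via Lemma~\ref{l:charac-dagger-map-weak} and the single summand $S=\{x_n\}\cup\{z\}\in\mathcal{S}(X)$, and (v)$\Ra$(i) deduced from Proposition~\ref{p:wOCA-k-space}(ii) together with Proposition~\ref{p:quotient-woca}(ii). Your explicit bookkeeping of the copies $\widetilde{x}_n,\widetilde{z}$ and your justification that $\bigoplus\mathcal{S}(X)$ is metrizable locally compact are slightly more detailed than the paper, which simply calls (ii)$\Ra$(iii)$\Ra$(iv)$\Ra$(v) ``trivial''.
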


\begin{proof}
(i)$\Ra$(ii) Assume that $X$ is a $\kappa$-sequential space. Consider the identity mapping
\[
I:\bigoplus_{S\in \mathcal{S}(X)} S \to X
\]
We show that $I$ is a weakly \dag-map. Let $U\subseteq X$ be an open non-closed set.  Since $X$ is a $\kappa$-sequential space, there are a point $x\in \overline{U}\SM U$ and  $\{x_n\}_{n\in\w}\subseteq U$ such that $x_n\to x$. Set $S:=\{x_n\}_{n\in\w}\cup\{x\}$. Then $S\in \mathcal{S}(X)$ and $I{\restriction}_S$ is the identity map. Since $x\in I^{-1}(x)\cap \overline{I^{-1}(U)}$, it follows that  $I$ is a weakly  $\kappa$-pseudo open map.
\smallskip

The implications (ii)$\Ra$(iii)$\Ra$(iv)$\Ra$(v) are trivial.
\smallskip

(v)$\Ra$(i) follows from (ii) of Proposition \ref{p:quotient-woca} and (ii) of Proposition \ref{p:wOCA-k-space}.\qed
\end{proof}

Theorem 3.3 of \cite{LiL} states that a space $X$ is $\kappa$-Fr\'{e}chet--Urysohn if, and only if, $X$ is a $\kappa$-pseudo open image of a metric space. Below we give new characterizations of  $\kappa$-Fr\'{e}chet--Urysohn spaces.

\begin{theorem} \label{t:character-kappa-FU}
For a space $X$, the following assertions are equivalent:
\begin{enumerate}
\item[{\rm(i)}] $X$ is a $\kappa$-Fr\'{e}chet--Urysohn space;
\item[{\rm(ii)}] the natural inclusion map $I: \bigoplus\mathcal{S}(X)\to X$ is a  $\kappa$-pseudo open map;
\item[{\rm(iii)}] $X$ is a  $\kappa$-pseudo open map image of some metrizable locally compact space;
\item[{\rm(iv)}] $X$ is a  $\kappa$-pseudo open map image of some Fr\'{e}chet--Urysohn space.
\end{enumerate}
\end{theorem}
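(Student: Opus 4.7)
The plan is to establish the cycle (i)$\Ra$(ii)$\Ra$(iii)$\Ra$(iv)$\Ra$(i), following the template already used for Theorems \ref{t:charact-oca}, \ref{t:character-woca} and in particular \ref{t:character-kappa-seq}. The only essential difference from Theorem \ref{t:character-kappa-seq} is that we replace ``weakly $\kappa$-pseudo open'' by ``$\kappa$-pseudo open'', which forces us to attain \emph{every} boundary point rather than some boundary point. This matches exactly the difference between the definitions of $\kappa$-sequential and $\kappa$-Fr\'{e}chet--Urysohn spaces, so the parallelism is clean.

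For (i)$\Ra$(ii), assume $X$ is $\kappa$-Fr\'{e}chet--Urysohn and consider the natural inclusion $I:\bigoplus_{S\in \mathcal{S}(X)} S\to X$. I will verify the characterization (iii) of Lemma \ref{l:charac-dagger-map}: given an open non-closed $U\subseteq X$ and an arbitrary $y\in \overline{U}\SM U$, the $\kappa$-Fr\'{e}chet--Urysohn property produces a sequence $\{x_n\}_{n\in\w}\subseteq U$ converging to $y$. Then $S:=\{x_n\}_{n\in\w}\cup\{y\}$ belongs to $\mathcal{S}(X)$, the restriction $I\!\upharpoonright_S$ is the identity, and the copy of $y$ inside this summand lies in $I^{-1}(y)\cap \overline{I^{-1}(U)}$. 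Hence $I$ is $\kappa$-pseudo open.

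The implication (ii)$\Ra$(iii) is immediate once one notes that each summand $S\in \mathcal{S}(X)$ is a compact metrizable space, so the topological sum $\bigoplus \mathcal{S}(X)$ is a metrizable locally compact space. The implication (iii)$\Ra$(iv) is equally immediate, since every metrizable space is Fr\'{e}chet--Urysohn. Finally for (iv)$\Ra$(i), if $X$ is a $\kappa$-pseudo open image of a Fr\'{e}chet--Urysohn space $Y$, then $Y$ is in particular $\kappa$-Fr\'{e}chet--Urysohn, and Proposition \ref{p:quot-oca}(ii) then gives that $X$ is $\kappa$-Fr\'{e}chet--Urysohn.

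There is no real obstacle here: the argument is entirely parallel to Theorem \ref{t:character-kappa-seq}, with Lemma \ref{l:charac-dagger-map} replacing Lemma \ref{l:charac-dagger-map-weak} and Proposition \ref{p:quot-oca}(ii) replacing Proposition \ref{p:quotient-woca}(ii). The only small point to keep in mind is that in (i)$\Ra$(ii) one must handle the quantifier ``for every $y\in \overline{U}\SM U$'' rather than ``for some $y$'', but the $\kappa$-Fr\'{e}chet--Urysohn hypothesis precisely delivers a convergent sequence \emph{for each} such $y$, so the verification goes through uniformly.
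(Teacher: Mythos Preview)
Your proposal is correct and follows essentially the same approach as the paper: the cycle (i)$\Ra$(ii)$\Ra$(iii)$\Ra$(iv)$\Ra$(i) via Lemma \ref{l:charac-dagger-map}(iii) and Proposition \ref{p:quot-oca}(ii), exactly paralleling Theorem \ref{t:character-kappa-seq}. Your exposition is in fact slightly more explicit than the paper's (you spell out why $\bigoplus\mathcal{S}(X)$ is metrizable locally compact and why Fr\'{e}chet--Urysohn implies $\kappa$-Fr\'{e}chet--Urysohn before invoking Proposition \ref{p:quot-oca}(ii)), but the argument is otherwise identical.
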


\begin{proof}
(i)$\Ra$(ii) Assume that $X$ is a $\kappa$-Fr\'{e}chet--Urysohn space. Consider the identity mapping
\[
I:\bigoplus_{S\in \mathcal{S}(X)} S \to X
\]
We show that $I$ is a  $\kappa$-pseudo open map. Let $U\subseteq X$ be an open non-closed set and let $x\in \overline{U}\SM U$ be an arbitrary point.  Since $X$ is a $\kappa$-Fr\'{e}chet--Urysohn space, there is a sequence  $\{x_n\}_{n\in\w}\subseteq U$ converging to $x$. Set $S:=\{x_n\}_{n\in\w}\cup\{x\}$. Then $S\in \mathcal{S}(X)$ and $I{\restriction}_S$ is the identity map. Since $x\in I^{-1}(x)\cap \overline{I^{-1}(U)}$, it follows that  $I$ is a \dag-map.
\smallskip

The implications (ii)$\Ra$(iii)$\Ra$(iv) are trivial.
\smallskip

(iv)$\Ra$(i) follows from (ii) of Proposition \ref{p:quot-oca}.\qed 
\end{proof}



\section{Locally compact groups and locally compact abelian groups with the Bohr topology} \label{sec:LCG-LCA-Bohr}


We start this section with the following general theorem. Recall that a topological group $G$ is called {\em feathered} if it contains a nonempty compact set $K$ of countable character in $G$.


\begin{theorem} \label{t:feathered-group-kFU}
Each feathered group $G$ is $\kappa$-Fr\'{e}chet--Urysohn.
\end{theorem}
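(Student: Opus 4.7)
The proof rests on the classical structure theorem for feathered groups: any feathered topological group $G$ contains a compact subgroup $H$ of countable character in $G$ such that the quotient space $G/H$ is metrizable. Let $\pi\colon G\to G/H$ denote the continuous open quotient map; since $H$ is compact, $\pi$ is moreover perfect (closed with compact fibers). My plan is to establish $\kappa$-Fr\'echet--Urysohness of $G$ directly, by lifting a convergent sequence from the metrizable quotient $G/H$ and then adjusting it inside the compact coset $xH$ by means of the $\kappa$-Fr\'echet--Urysohn property of compact groups (Proposition~\ref{p:compact-group-kFU}).

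Concretely, fix an open set $U\subseteq G$ and a point $x\in\overline U$; assume $x\notin U$, since otherwise the constant sequence suffices. By continuity of $\pi$ we have $\pi(x)\in\overline{\pi(U)}$ in $G/H$, and because $G/H$ is metrizable and hence Fr\'echet--Urysohn there exists a sequence $(y_n)_{n\in\w}\subseteq\pi(U)$ with $y_n\to\pi(x)$. Using that $H$ has countable character in $G$, fix a decreasing countable neighborhood base $\{V_n\}_{n\in\w}$ of $\pi(x)$ in $G/H$; then $\{\pi^{-1}(V_n)\}_{n\in\w}$ is a decreasing countable base of open neighborhoods of the compact coset $xH$ in $G$. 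Lifting each $y_n$ to some $x_n\in U\cap\pi^{-1}(y_n)$ and passing to a subsequence, I may assume $x_n\in\pi^{-1}(V_n)\cap U$, so that $(x_n)\subseteq U$ is eventually contained in every open neighborhood of the fiber $xH$ in $G$.

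To upgrade this ``convergence toward the compact coset $xH$'' to genuine convergence to $x$, note that $xH$ is homeomorphic to the compact group $H$ and hence $\kappa$-Fr\'echet--Urysohn by Proposition~\ref{p:compact-group-kFU}. I plan to use this property to select elements $h_n\in H$ such that the adjusted sequence $(x_n h_n^{-1})$ stays inside $U$ and converges to $x$ in $G$. This requires a diagonal argument that interlaces the countable base $\{\pi^{-1}(V_n)\}$ of $xH$ in $G$ with an application of the $\kappa$-Fr\'echet--Urysohn property of $xH$, using the openness of $U$ to guarantee that the adjusted terms remain in $U$. Producing such a sequence completes the verification that $G$ is $\kappa$-Fr\'echet--Urysohn.

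The main obstacle is precisely this last step. Because $H$ need not be first-countable, the lifted sequence $(x_n)$ admits no convergent subsequence in $G$ in general, so one cannot simply extract a convergent subsequence and translate it into $U$. One is forced to exploit the strictly weaker $\kappa$-Fr\'echet--Urysohn property of the compact group $H$, and to choose the correcting factors $h_n$ so as to simultaneously (i) keep each $x_n h_n^{-1}$ inside the open set $U$, (ii) force $x_n h_n^{-1}$ into ever smaller neighborhoods of the specific point $x$ (as opposed to some other point of the fiber $xH$), and (iii) maintain compatibility across all $n$ through a diagonal selection tied to the base $\{\pi^{-1}(V_n)\}$. Organizing this coherent interaction between the base, the $\kappa$-FU structure of $H$, and the openness of $U$ is the crux of the argument; a shortcut, if available, would be to show abstractly that $\kappa$-Fr\'echet--Urysohness is inherited by the domain of a perfect map onto a metrizable space whose fibers are $\kappa$-Fr\'echet--Urysohn, and then apply it to $\pi$.
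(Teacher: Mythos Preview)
Your framework is right—reduce to a metrizable quotient $G/H$ via a perfect open map $\pi$—but the step you yourself flag as ``the crux'' is a genuine gap, and the adjustment scheme you sketch does not close it. The lifted points $x_n$ lie in $U$, yet $U\cap x_nH$ may be an arbitrarily small patch of the coset $x_nH$, located far (in the $H$-direction) from where you need it; multiplying by $h_n^{-1}$ so as to drive $x_nh_n^{-1}$ toward $x$ will in general eject it from $U$. The $\kappa$-Fr\'echet--Urysohn property of the \emph{single} fiber $xH$ gives you no leverage here, since $U$ need not meet $xH$ at all. Your proposed shortcut—that $\kappa$-Fr\'echet--Urysohness lifts along perfect maps with metrizable base and $\kappa$-Fr\'echet--Urysohn fibers—is neither proved nor what the paper uses.

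The paper supplies two ideas you are missing, and it reverses your order of choices: the compact subgroup $H$ is selected \emph{after} the open set, not before. First, using that feathered groups have countable $o$-tightness, the paper replaces $W$ by a smaller open $U\subseteq W$ still satisfying $g\in\overline U$ but now \emph{saturated}: $UH=U$ for some compact subgroup $H$ of countable character, so that $U=\pi^{-1}(\pi(U))$. Second, instead of invoking $\kappa$-Fr\'echet--Urysohness of the fiber $H$, it pulls back the compact set $Q=\{y_n:n\in\omega\}\cup\{\pi(g)\}\subseteq G/H$ to obtain a compact $G_\delta$ set $K=\pi^{-1}(Q)\subseteq G$; by the theorem that compact $G_\delta$ subsets of topological groups are Dugundji (hence dyadic), $K$ itself is $\kappa$-Fr\'echet--Urysohn. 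Saturation gives $K\cap U=\pi^{-1}\{y_n:n\in\omega\}$, whence $gH\subseteq\overline{K\cap U}$ and in particular $g\in\overline{K\cap U}$; applying the $\kappa$-Fr\'echet--Urysohn property of $K$ (not of $H$) then yields the desired sequence inside $K\cap U\subseteq W$.
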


\begin{proof}
Let $W\subseteq G$ be open and $g\in \overline{W}\setminus W$. We have to prove that there exists a sequence $(g_n)_{n\in\w}\subseteq W$ converging to $g$.

Let $\mathcal{H}$ be a family of compact subgroups of $G$ of countable character in $G$. Set
\[
\mathcal{B} =\{ UH: U\subseteq G \mbox{ is open in } G \mbox{ and } H\in\mathcal{H}\}.
\]
By definition, if $U\in \mathcal{B}$, then there is $H_U\in\mathcal{H}$ such that $U=UH_U$.

{\em Claim 1. The family $\mathcal{B}$ satisfies the following conditions:
\begin{enumerate}
\item[{\rm (i)}] if $x\in G$ and $U\in\mathcal{H}$, then $xU\in\mathcal{B}$;
\item[{\rm (ii)}] $\mathcal{B}$ is a base for $G$;
\item[{\rm (iii)}] if $U,V\in \mathcal{B}$, then $U\cap V\in \mathcal{B}$;
\item[{\rm (iv)}] if $(U_n)_{n\in\w}\subseteq\mathcal{B}$, then $\bigcup_{n\in\w}U_n\in \mathcal{B}$.
\end{enumerate}
}

Indeed, if $U=UH$ for some $H\in \mathcal{H}$, then $xU=(xU)H\in\mathcal{B}$ that proves (i).

(ii) is Corollary 4.3.12 of \cite{ArT}. Let us give a simple direct proof.  By (i), it suffices to verify that the family $\mathcal{B}_0=\{U\in\mathcal{B}:e\in U\}$ is a base at the identity $e$. Let $S$ be a neighborhood of $e$. Then $V^2\subseteq S$ for some neighborhood $V$ of $e$. Proposition 4.3.11 of \cite{ArT}, there exists $H\in \mathcal{H}$ such that $H\subseteq V$. Then $VH\in\mathcal{B}_0$ and $VH\subseteq V^2\subseteq S$.

To prove (iii), choose  $H_U,H_V\in\mathcal{H}$ such that $UH_U=U$ and $VH_V=V$. Set $H=H_U\cap H_V$. Then $H\in\mathcal{H}$ and $(U\cap V)H=U\cap V$.

For every $n\in\w$, take $H_n\in\mathcal{H}$ such that $U_nH_n=U_n$. Set $H=\bigcap_{i\in\w}H_n$. Then $U_nH=U_n$ for every $n\in\w$. Therefore, $\big(\bigcup_{n\in\w}U_n\big)H=\bigcup_{n\in\w}U_n$. This proves (iv), and hence Claim 1 is proved.
\smallskip

The next claim means that we can replace $W$ by some $U\in\mathcal{B}$.
\smallskip

{\em Claim 2. There exists $U\in\mathcal{B}$ such that $g\in \overline{U}$ and $U\subseteq W$.}

Indeed, set $\gamma:=\{V\in\mathcal{B}:V\subseteq W\}$. Since, by (ii) of Claim 1,  $\mathcal{B}$ is a base of $G$, we have $\bigcup \gamma=W$.
Since $G$ is feathered, the $o$-tightness of $G$ is countable \cite[Corollary 5.5.7]{ArT}. Therefore, $g\in \overline{\bigcup\mu}$ for some at most countable $\mu\subseteq\gamma$. Let $U=\bigcup\mu$. Then, by (iv) of Claim 1, we have $U\in \mathcal{B}$. Since  $g\in \overline{U}$ and $U\subseteq W$, the set $U$ is as desired. Claim 2 is proved.
\smallskip

Fix $U\in\mathcal{B}$ which satisfies Claim 2. Take $H\in\mathcal{H}$ such that $UH=U$. Let $G/H=\{xH:x\in G\}$ be a left quotient space.
Since $H$ is a compact subgroup, the quotient mapping
\[
\pi: G\to G/H,\ x\mapsto xH
\]
is perfect \cite[Theorem 1.5.7]{ArT} and open \cite[Theorem 1.5.1]{ArT}. Set $x:=\pi(g)$. Since $g\in \overline{U}$, we have $x\in \overline{\pi(U)}$. As $H$ is a compact subgroup of countable character in $G$, $G/H$ is metrizable \cite[Lemma 4.3.19]{ArT}. Therefore, there exists a sequence $(x_n)_{n\in\w}\subseteq \pi(U)$ converging to the point $x=\pi(g)$. Put $S:=\{x_n:n\in\w\}$ and $Q=\{x\}\cup S$. The set $Q$ is compact in the metrizable space $G/H$, and hence $Q$ is a set of type $G_\delta$ in $G/H$. Since $\pi$ is a surjective continuous perfect mapping, the set $K=\pi^{-1}(S)$ is a compact set of type $G_\delta$ in $G$. Then, by Corollary 10.3.9 of  \cite{ArT},  $K$ is a Dugundji compact space. Consequently, by Theorem 10.1.3 of  \cite{ArT}, $K$ is a dyadic compact space.
Therefore, by Corollary 3.5 of \cite{LiL}, $K$ is a $\kappa$-Fr\'{e}chet--Urysohn space. Since $UH=U$, we have $U=\pi^{-1}(\pi(U))$. Consequently, $K\cap U=\pi^{-1}(S)$. Since $\pi$ is open, we have $gH=\pi^{-1}(x)\subseteq \overline{K\cap U}$. Consequently, $g\in \overline{K\cap U}$. As $K$ is a $\kappa$-Fr\'{e}chet--Urysohn space, there exists a sequence $(g_n)_{n\in\w} \subseteq K\cap U \subseteq W$ converging to $g$.\qed
\end{proof}

Since all \v{C}ech-complete groups and hence all locally compact groups are feathered \cite[4.3]{ArT}, we obtain the following generalization of Proposition \ref{p:compact-group-kFU}.

\begin{corollary} \label{c:lcg-kFU}
Each \v{C}ech-complete group is a $\kappa$-Fr\'{e}chet--Urysohn space. Consequently, every locally compact group is a $\kappa$-Fr\'{e}chet--Urysohn space.
\end{corollary}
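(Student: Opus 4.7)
The plan is to derive this corollary as a direct application of Theorem \ref{t:feathered-group-kFU} combined with two standard facts from the theory of topological groups. First, I would recall the definition of a feathered group and observe that the entire content of the corollary lies in verifying the hypothesis of the theorem, namely that every \v{C}ech-complete group (and in particular every locally compact group) contains a nonempty compact subset of countable character.

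The first step is to handle \v{C}ech-complete groups. By a classical result of Arhangel'skii (see \cite[Section 4.3]{ArT}), every \v{C}ech-complete topological group is feathered: this follows from the fact that a \v{C}ech-complete group contains a compact subgroup $H$ such that the quotient $G/H$ is metrizable by a complete metric, and $H$ itself then has countable character in $G$. Once feathered-ness is established, Theorem \ref{t:feathered-group-kFU} applies verbatim and yields that the group is $\kappa$-Fr\'{e}chet--Urysohn.

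The second step handles the ``consequently'' clause. Every locally compact Hausdorff space is \v{C}ech-complete (it is an open subspace of its one-point compactification, hence a $G_\delta$ in a compact Hausdorff space). Hence every locally compact group is \v{C}ech-complete, and the first part applies.

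I do not anticipate a real obstacle here: all the substantive work was done in Theorem \ref{t:feathered-group-kFU}, and both the facts ``\v{C}ech-complete $\Rightarrow$ feathered'' and ``locally compact $\Rightarrow$ \v{C}ech-complete'' are standard and explicitly cited in \cite{ArT}. The only care needed is to state the references precisely and to note that the implication chain \emph{locally compact $\Rightarrow$ \v{C}ech-complete $\Rightarrow$ feathered $\Rightarrow$ $\kappa$-Fr\'{e}chet--Urysohn} gives both assertions of the corollary at once.
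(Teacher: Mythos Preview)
Your proposal is correct and follows essentially the same approach as the paper: both reduce the corollary to Theorem~\ref{t:feathered-group-kFU} via the standard fact from \cite[Section~4.3]{ArT} that \v{C}ech-complete (and hence locally compact) groups are feathered. The paper's proof is just the one-line citation of this implication chain, exactly as you outline.
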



Let $G$ be a locally compact abelian group. The group $G$ endowed with the Borh topology $\tau_b$ induced from the Bohr compactification $bG$ of $G$ is denoted by $G^+$. Numerous topological and algebraic-topological properties of the precompact group $G^+$ are well-studied, we refer the reader to Chapter 9 of \cite{ArT} and references therein. The next result lies in this line of researches.

\begin{theorem} \label{t:LCA-Bohr-kFU}
For a locally compact abelian group $G$, the following assertions are equivalent:
\begin{enumerate}
\item[{\rm(i)}] $G^+$ is a $\kappa$-Fr\'{e}chet--Urysohn space;
\item[{\rm(ii)}] $G^+$ is an open-compact attainable space;
\item[{\rm(iii)}] $G^+$ is an Ascoli space;
\item[{\rm(iv)}] $G^+$ is a sequentially Ascoli space;
\item[{\rm(v)}]  $G$ is a compact space;
\item[{\rm(vi)}] $G^+$ is a $k'$-space.
\end{enumerate}
\end{theorem}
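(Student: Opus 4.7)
The plan is to verify the chain (v)$\Rightarrow$(i)$\Rightarrow$(ii)$\Rightarrow$(iii)$\Rightarrow$(iv)$\Rightarrow$(v) together with the side chain (v)$\Rightarrow$(vi)$\Rightarrow$(iii). Most arrows are immediate from material already established in the paper, so the entire content of the theorem concentrates in the single step (iv)$\Rightarrow$(v).

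\textbf{The easy implications.} If $G$ is compact then the Bohr compactification of $G$ is $G$ itself, hence $G^+ = G$; Corollary \ref{c:lcg-kFU} then yields (i). The chain (i)$\Rightarrow$(ii)$\Rightarrow$(iii)$\Rightarrow$(iv) is read directly off the diagram in the introduction. The trivial (v)$\Rightarrow$(vi) (compact spaces are $k'$-spaces) together with parts (iii) and (v) of Proposition \ref{p:wOCA-k-space} closes the side arrow (vi)$\Rightarrow$(iii).

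\textbf{The main step (iv)$\Rightarrow$(v).} Assuming $G^+$ is sequentially Ascoli, I will show $G$ must be compact, by contraposition. The two external tools I plan to invoke are Glicksberg's theorem (compact subsets of $G$ and of $G^+$ coincide and carry the same topology) and the Pontryagin--van Kampen structure theorem, which guarantees that a non-compact locally compact abelian group contains a closed subgroup $H$ topologically isomorphic to $\mathbb{Z}$ or to $\mathbb{R}$. A further standard fact from Bohr theory that I intend to use is that every continuous character of a closed subgroup $H$ of an LCA group $G$ extends to a continuous character of $G$; this forces the Bohr topology of $H$ to coincide with the subspace topology inherited from $G^+$, so $H^+$ sits as a closed topological subgroup of $G^+$.

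Armed with these, the strategy is two-step. First, I show that $\mathbb{Z}^+$ and $\mathbb{R}^+$ are not sequentially Ascoli. For $\mathbb{Z}^+$, Glicksberg's theorem forces every compact subset of $\mathbb{Z}^+$ to be finite, so a sequence of characters $\chi_n(k)=e^{2\pi i\alpha_n k}$ with $\alpha_n\to 0$ converges in $C_k(\mathbb{Z}^+)$ (all compacts being finite, convergence reduces to pointwise on finite sets) while failing to be equicontinuous at $0$ in the Bohr topology. The case of $\mathbb{R}^+$ is analogous. Second, I transfer the obstruction from $H \in \{\mathbb{Z},\mathbb{R}\}$ to $G^+$: given the witnessing sequence on $H^+$, I extend each defining character to $G$ and obtain a corresponding sequence of continuous functions on $G^+$ which still converges in $C_k(G^+)$ (compact subsets of $G^+$ meet $H$ in compact subsets of $H^+$, by Glicksberg) but fails equicontinuity at the identity of $G$, contradicting the sequential Ascoli property of $G^+$.

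The main obstacle will be this transfer step, precisely because sequential Ascoli is not in general hereditary to closed subspaces (cf.\ Theorem \ref{t:woca-hereditary} for the weakly OCA analogue). The substitute for hereditarity that I plan to exploit is the group structure together with the character-extension property above, which allows an equicontinuity failure witnessed on $H$ to be lifted to one witnessed on $G$ without relying on any general topological hereditary principle.
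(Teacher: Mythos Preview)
Your cycle of easy implications matches the paper's, and the paper likewise reduces everything to the single step (iv)$\Rightarrow$(v). For that step the paper does not argue directly: it simply invokes Theorem~3.1 of \cite{Gabr-seq-Ascoli}. You instead attempt a self-contained proof, which is more ambitious, but your outline has two genuine gaps.

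First, the structural claim you rely on is false: it is \emph{not} true that every non-compact LCA group contains a closed copy of $\mathbb{Z}$ or $\mathbb{R}$. The discrete group $\bigoplus_{\omega}\mathbb{Z}/2$ is non-compact, locally compact abelian, torsion (so no copy of $\mathbb{Z}$), and countable (so no copy of $\mathbb{R}$). What the structure theorem actually gives is $G\cong\mathbb{R}^n\times G_0$ with $G_0$ containing a compact open subgroup $K$; when $n=0$ and $G$ is non-compact you only get an infinite discrete \emph{quotient} $G_0/K$, not a $\mathbb{Z}$-subgroup. Second, your transfer step does not work as written: the parenthetical ``compact subsets of $G^+$ meet $H$ in compact subsets of $H^+$'' is true but irrelevant to establishing that the extended characters $\tilde\chi_n$ converge in $C_k(G^+)$, since you must control $\tilde\chi_n$ uniformly on \emph{all} compacta of $G^+$, most of which do not lie in $H$ at all, and character extensions are far from unique. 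The natural repair is to trade subgroups for quotients: sequential Ascoli passes to open (hence $R$-quotient) images, and $(G/K)^+\cong G^+/K$ (cf.\ \cite[Problem~9.9.N]{ArT}, used in the very next theorem of the paper). This reduces (iv)$\Rightarrow$(v) to showing that $D^+$ is not sequentially Ascoli for an infinite discrete abelian $D$, where Glicksberg (finite compacta) and non-discreteness of $D^+$ can be brought to bear---but that argument still needs to be supplied, and is essentially the content of the cited external theorem.
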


\begin{proof}
The implications (i)$\Ra$(ii)$\Ra$(iii)$\Ra$(iv) and (v)$\Ra$(vi)$\Ra$(ii) are clear. The implication (iv)$\Ra$(v) is proved in Theorem 3.1 of \cite{Gabr-seq-Ascoli}, and the implication (v)$\Ra$(i) follows from Corollary \ref{c:lcg-kFU} and the fact that $G^+=G$ for any compact abelian group.\qed
\end{proof}

\begin{corollary} \label{c:LCA-Bohr-FU}
Let $G$ be a locally compact abelian group. Then  $G^+$ is a Fr\'{e}chet--Urysohn space if, and only if, $G$ is a compact Fr\'{e}chet--Urysohn space.
\end{corollary}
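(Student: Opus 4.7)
The proof plan is to chain together the preceding Theorem \ref{t:LCA-Bohr-kFU} with one standard fact about Bohr topologies, so that both directions become essentially immediate.

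For the forward direction, assume $G^+$ is Fr\'{e}chet--Urysohn. From the diagram in the introduction, every Fr\'{e}chet--Urysohn space is $\kappa$-Fr\'{e}chet--Urysohn, so in particular $G^+$ satisfies condition (i) of Theorem \ref{t:LCA-Bohr-kFU}. That theorem then gives condition (v), namely that $G$ is compact. For a compact abelian group the Bohr compactification is $G$ itself, so the Bohr topology $\tau_b$ coincides with the original topology of $G$, i.e.\ $G^+=G$ as topological spaces. Hence $G$ inherits the Fr\'{e}chet--Urysohn property of $G^+$, giving the ``only if'' direction.

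For the backward direction, assume $G$ is a compact Fr\'{e}chet--Urysohn group. Again, since $G$ is compact abelian, $G^+=G$ as topological spaces, so $G^+$ is Fr\'{e}chet--Urysohn. This concludes the proof.

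There is no real obstacle here: the content is already packed into Theorem \ref{t:LCA-Bohr-kFU} (which supplies the non-trivial implication ``sequentially Ascoli $\Rightarrow$ $G$ compact'' in the locally compact abelian setting) and the elementary identity $G^+=G$ for compact $G$. The only thing worth double-checking while writing up is that one explicitly invokes the implication ``Fr\'{e}chet--Urysohn $\Rightarrow$ $\kappa$-Fr\'{e}chet--Urysohn'' from the diagram so that the hypothesis of Theorem \ref{t:LCA-Bohr-kFU} is verified.
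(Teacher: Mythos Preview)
Your proof is correct and matches the paper's intended argument: the corollary is stated without proof immediately after Theorem~\ref{t:LCA-Bohr-kFU}, so it is meant to follow exactly as you describe, via the implication Fr\'{e}chet--Urysohn $\Rightarrow$ $\kappa$-Fr\'{e}chet--Urysohn, the equivalence (i)$\Leftrightarrow$(v) of that theorem, and the identity $G^+=G$ for compact abelian $G$.
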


To consider the property of being $\kappa$-sequential or weakly open-compact attainable for locally compact abelian groups with the Bohr topology we use the following assertion.

\begin{theorem} \label{t:prod-kappa-connected}
Let $X$ and $Y$ be $\kappa$-sequential {\rm(}resp., weakly open-compact attainable{\rm)} spaces. If $X$ is connected, then $X\times Y$ is a $\kappa$-sequential {\rm(}resp., weakly open-compact attainable{\rm)} space.
\end{theorem}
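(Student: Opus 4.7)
The plan is to analyze an open non-closed set $W \subseteq X\times Y$ by looking at its vertical slices $W_y := \{x\in X : (x,y)\in W\}$ and exploiting the connectedness of $X$. Each $W_y$ is open in $X$; since $X$ is connected, $W_y$ cannot be a proper clopen subset, so it must be either $\es$, all of $X$, or a proper non-closed open subset. This yields a natural dichotomy which I treat the same way in both the $\kappa$-sequential and weakly open-compact attainable versions.

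First I would handle Case A, when some slice $W_{y_0}$ is a proper non-closed open subset of $X$. Applying the $\kappa$-sequential (resp.\ weakly open-compact attainable) property of $X$ to $W_{y_0}$ produces a point $x_0\in \overline{W_{y_0}}\SM W_{y_0}$ together with a sequence $(x_n)_{n\in\w}\subseteq W_{y_0}$ converging to $x_0$ (resp.\ a compact $K\subseteq X$ with $x_0 \in K\cap \overline{K\cap W_{y_0}}$). Lifting to the product by pairing everything with the second coordinate $y_0$ gives the sequence $\big((x_n,y_0)\big)_{n\in\w}\subseteq W$ converging to $(x_0,y_0)$ (resp.\ the compact set $K\times\{y_0\}\subseteq X\times Y$ satisfying $(x_0,y_0)\in (K\times\{y_0\})\cap \overline{(K\times\{y_0\})\cap W}$), and clearly $(x_0,y_0)\in \overline{W}\SM W$.

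In the complementary Case B, every slice is either $\es$ or all of $X$, so $W = X\times A$ where $A:=\{y\in Y:W_y=X\}=\pi_Y(W)$. Since $\pi_Y$ is an open map, $A$ is open in $Y$; and since $\overline{X\times A}=X\times\overline{A}$ (using $X\neq\es$), the fact that $W$ is non-closed in $X\times Y$ forces $A$ to be non-closed in $Y$. Now apply the $\kappa$-sequential (resp.\ weakly open-compact attainable) property of $Y$ to $A$, obtaining $y_0\in \overline{A}\SM A$ together with a sequence $(y_n)_{n\in\w}\subseteq A$ converging to $y_0$ (resp.\ a compact $L\subseteq Y$ with $y_0\in L\cap\overline{L\cap A}$). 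Fixing any $x_0\in X$, the sequence $\big((x_0,y_n)\big)_{n\in\w}\subseteq W$ converges to $(x_0,y_0)$ (resp.\ the compact set $\{x_0\}\times L$ satisfies $(x_0,y_0)\in (\{x_0\}\times L)\cap\overline{(\{x_0\}\times L)\cap W}$). The fact that $(x_0,y_0)\notin W$ follows from $y_0\notin A$.

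No substantial obstacle is anticipated: the argument is essentially a clean case split. The delicate points that need to be recorded explicitly are (i) the trichotomy of slices, which uses connectedness of $X$ in a decisive way by ruling out proper clopen slices, and (ii) the identification $\overline{X\times A}=X\times \overline{A}$ which is what transfers non-closedness from $W$ to $A$ in Case B. Notice that Case A uses only that $X$ has the property, while Case B uses only that $Y$ has it, so the hypotheses are used symmetrically and both parts of the theorem (the $\kappa$-sequential and the weakly open-compact attainable versions) are established by the same scheme.
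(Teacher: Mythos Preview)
Your proof is correct and follows essentially the same approach as the paper's own argument: both analyze the vertical slices $W_y$, use connectedness of $X$ to force the trichotomy (empty, all of $X$, or proper non-closed), and then split into the two cases you call A and B. The only cosmetic difference is that the paper wraps the argument in a proof by contradiction (assuming no such sequence/compact witness exists for any boundary point of $W$, then showing first that $W$ must have the form $X\times U$, and finally deriving a contradiction from the property of $Y$), whereas you present it as a direct case split; the underlying logic and the use of the hypotheses are identical.
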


\begin{proof}
Suppose for a contradiction that there is an open non-closed set $W\subseteq X\times Y$ such that for every $z\in \overline{W}\SM W$ there is no a sequence (resp., a relatively compact in  $X\times Y$ subset) $A\subseteq W$ which converges to $z$ (resp.,  $z\in \overline{A}$).

We claim that there is an open $U\subseteq Y$ such that $W=X\times U$. Indeed, otherwise, there would exist a point $(x',y_\ast)\in W$ such that the open nonempty subset $V:=\{x\in X: (x,y_\ast)\in W\}$ of $X$ is not equal to $X$. We show that $V$ is also closed. To this end, assuming for a contradiction and taking into account that $X$ is $\kappa$-sequential (resp., weakly open-compact attainable), we could find a point $x_\ast\in \overline{V}\SM V$ and a sequence (resp., a relatively compact in $X$ subset) $S\subseteq V$ such that $x_\ast\in \overline{S}$. Then the point $z=(x_\ast,y_\ast)$ belongs to $\overline{W}\SM W$ and the sequence (resp., the relatively compact in  $X\times Y$ subset) $A:=(S,y_\ast)\subseteq W$ converges to $z$ (resp.,  $z\in \overline{A}$). But this contradicts our supposition on $W$. Therefore $V$ must be also closed. Since $X$ is connected, it follows that the clopen nonempty subset $V$ of $X$ must coincide with $X$. This contradiction shows that $W=X\times U$ for some open subset $U$ of $Y$.

Since $W$ is not closed, we obtain that also $U$ is not closed. Since $Y$ is $\kappa$-sequential (resp., weakly open-compact attainable), there are a point $y_0\in \overline{U}\SM U$ and a sequence (resp., a relatively compact in $Y$ subset) $S_0\subseteq U$ such that $y_0\in \overline{S_0}$. Then for every $x\in X$, the point $z_0=(x,y_0)$ belongs to $\overline{W}\SM W$ and the sequence (resp., the relatively compact subset) $A_0:=(x,S_0)\subseteq W$ converges to $z_0$ (resp.,  $z_0\in \overline{A_0}$). However this contradicts our assumption on $W$.\qed
\end{proof}

Corollary 3.5 of \cite{LiL} states that any dyadic compact space is $\kappa$-Fr\'{e}chet--Urysohn. This result and Theorem \ref{t:prod-kappa-connected} immediately imply the following corollary.
\begin{corollary} \label{c:k-sequential-dyadic}
If $X$ is a connected $\kappa$-sequential {\rm(}resp., weakly open-compact attainable{\rm)} space and $K$ is a dyadic compactum, then the product $X\times K$ is a $\kappa$-sequential {\rm(}resp., weakly open-compact attainable{\rm)} space.
\end{corollary}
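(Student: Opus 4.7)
The plan is to simply combine the two ingredients explicitly highlighted in the paragraph immediately preceding the corollary. Namely, observe that the hypothesis on $K$ (a dyadic compactum) lets us import Corollary 3.5 of \cite{LiL}, which tells us that $K$ is a $\kappa$-Fr\'{e}chet--Urysohn space. From the general diagram in the introduction, every $\kappa$-Fr\'{e}chet--Urysohn space is in particular both $\kappa$-sequential and open-compact attainable, hence a fortiori weakly open-compact attainable.

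With this, the proof is a one-line application of Theorem \ref{t:prod-kappa-connected}: let $Y := K$. Then $Y$ is $\kappa$-sequential (resp.\ weakly open-compact attainable) by the previous paragraph, $X$ is $\kappa$-sequential (resp.\ weakly open-compact attainable) by assumption, and $X$ is connected. Theorem \ref{t:prod-kappa-connected} then yields that $X\times Y = X\times K$ is $\kappa$-sequential (resp.\ weakly open-compact attainable), which is the desired conclusion.

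There is really no obstacle here since both halves of the argument are cited results; the only thing to double-check is that $\kappa$-Fr\'{e}chet--Urysohness implies the two properties we need. For $\kappa$-sequential this is immediate: given a non-closed open $W$, pick any $z\in \overline{W}\SM W$ and apply the $\kappa$-Fr\'{e}chet--Urysohn property to obtain a sequence in $W$ converging to $z$. For weakly open-compact attainable it is even more direct, since $\kappa$-Fr\'{e}chet--Urysohn $\Rightarrow$ open-compact attainable $\Rightarrow$ weakly open-compact attainable by the diagram (and by (iv) of Proposition \ref{p:wOCA-k-space}). So the corollary follows in a single line from Corollary 3.5 of \cite{LiL} and Theorem \ref{t:prod-kappa-connected}.
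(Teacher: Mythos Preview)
Your proposal is correct and follows exactly the paper's own approach: the corollary is stated immediately after the sentence ``Corollary 3.5 of \cite{LiL} states that any dyadic compact space is $\kappa$-Fr\'{e}chet--Urysohn. This result and Theorem \ref{t:prod-kappa-connected} immediately imply the following corollary,'' and that is precisely the combination you use.
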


By Corollary \ref{c:lcs-k-sequential}, each topological vector space, being even pass-connected, is $\kappa$-sequential. Now Corollary \ref{c:k-sequential-dyadic} implies the next assertion.
\begin{corollary} \label{c:tvs-dyadic-product}
If $E$ is a topological vector space and $K$ is a dyadic compactum, then $E\times K$ is a $\kappa$-sequential space.
\end{corollary}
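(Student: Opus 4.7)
The plan is to observe that this corollary is an immediate consequence of two results already proved just before it, namely Corollary \ref{c:lcs-k-sequential} and Corollary \ref{c:k-sequential-dyadic}, so the only content of the proof is to verify that the hypotheses of the latter are met.

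First I would note that every topological vector space $E$ is path-connected: for any two points $x,y\in E$ the map $t\mapsto (1-t)x+ty$ from $[0,1]$ to $E$ is continuous, which in particular gives a path from $x$ to $y$. In particular $E$ is connected. Moreover, by Corollary \ref{c:lcs-k-sequential} (which itself rests on Proposition \ref{p:pass-connected-k-sequential}), $E$ is a $\kappa$-sequential space. Thus $E$ satisfies both hypotheses (connectedness and $\kappa$-sequentiality) required by Corollary \ref{c:k-sequential-dyadic}.

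Applying Corollary \ref{c:k-sequential-dyadic} to the connected $\kappa$-sequential space $X:=E$ and the dyadic compactum $K$ yields that $E\times K$ is $\kappa$-sequential, as claimed.

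There is no real obstacle here; the corollary is essentially a packaging of the two previous corollaries. The only thing worth remarking is that one does not need local convexity or any further structure on $E$ beyond being a topological vector space, because path-connectedness (and hence connectedness) follows purely from the continuity of scalar multiplication and vector addition.
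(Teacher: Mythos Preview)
Your proof is correct and matches the paper's own argument exactly: invoke Corollary~\ref{c:lcs-k-sequential} (via path-connectedness of a topological vector space) to get that $E$ is connected and $\kappa$-sequential, and then apply Corollary~\ref{c:k-sequential-dyadic}.
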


We finish this section with the following theorem.

\begin{theorem} \label{t:LCA-Bohr-woca}
For a locally compact abelian group $G$, the following assertions are equivalent:
\begin{enumerate}
\item[{\rm(i)}] $G^+$ is a $\kappa$-sequential space;
\item[{\rm(ii)}] $G^+$ is a weakly open-compact attainable space;
\item[{\rm(iii)}] $G$ is topologically isomorphic to $\IR^n\times H$, where $n\in\w$ and $H$ is a compact group.
\end{enumerate}
\end{theorem}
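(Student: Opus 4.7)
The implication (i)$\Rightarrow$(ii) is immediate from the general diagram in the introduction, so the task is to establish the cycle (iii)$\Rightarrow$(i)$\Rightarrow$(ii)$\Rightarrow$(iii).

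For (iii)$\Rightarrow$(i), I would use the fact that the Bohr functor commutes with finite products and fixes compact groups, so if $G\cong\IR^n\times H$ with $H$ compact, then $G^+\cong(\IR^n)^+\times H$. The group $(\IR^n)^+$ is path-connected because the usual straight-line paths $[0,1]\to\IR^n$ remain continuous with respect to the coarser Bohr topology, hence $(\IR^n)^+$ is $\kappa$-sequential by Proposition~\ref{p:pass-connected-k-sequential}. The compact group $H$ is $\kappa$-Fr\'echet--Urysohn by Proposition~\ref{p:compact-group-kFU} and hence $\kappa$-sequential. Now Theorem~\ref{t:prod-kappa-connected} applied with the connected factor $(\IR^n)^+$ yields that $G^+$ is $\kappa$-sequential.

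The main work is in the implication (ii)$\Rightarrow$(iii). I would begin by invoking the structure theorem for locally compact abelian groups to write $G\cong \IR^n\oplus G_0$, where $G_0$ contains a compact open subgroup $K$. Since the Bohr functor commutes with finite direct sums, $G^+\cong(\IR^n)^+\times G_0^+$, and the projection onto the second factor is a continuous open homomorphism; consequently $G_0^+$ is a quotient group of $G^+$ and therefore weakly open-compact attainable by Corollary~\ref{c:quotient-woca-group}. The goal reduces to showing that $G_0$ is compact, i.e.\ that the discrete abelian group $D:=G_0/K$ is finite.

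Suppose for contradiction that $D$ is infinite. Since $K$ is closed in $G_0$, the quotient map $G_0\to D$ is continuous open, so the induced map $G_0^+\to D^+$ is also continuous open, and by Corollary~\ref{c:quotient-woca-group} once more, $D^+$ is weakly open-compact attainable. On the other hand, the Bohr topology on an infinite discrete abelian group is strictly coarser than the discrete topology, so $D^+$ is not discrete; and by Glicksberg's theorem the compact subsets of $D^+$ coincide with those of the discrete group $D$, hence are finite. But then Proposition~\ref{p:non-woca} forces $D^+$ to \emph{fail} weak open-compact attainability, a contradiction. Therefore $D$ is finite, $G_0$ is compact, and setting $H:=G_0$ we obtain $G\cong\IR^n\times H$ as required. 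The only potential subtlety lies in justifying that the Bohr functor really does respect the product decomposition and the quotient $G_0\to D$ (so that the compatible identifications $G^+\cong(\IR^n)^+\times G_0^+$ and $G_0^+/K^+\cong D^+$ hold as topological groups), but this is a standard consequence of the universal property of the Bohr compactification together with Glicksberg's theorem.
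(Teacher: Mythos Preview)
Your proof is correct and follows essentially the same route as the paper's. The only cosmetic differences are that the paper takes the quotient in a single step---passing directly from $G^+$ to $S^+$ where $S=G/(\IR^n\times K)$, citing Problem~9.9.N and Theorem~9.9.30 of \cite{ArT} for the Bohr-functor compatibility and the finiteness of compact sets in $S^+$---whereas you factor through $G_0^+$ first and invoke Glicksberg's theorem by name; and for the compact factor $H$ the paper cites Corollary~\ref{c:lcg-kFU} rather than Proposition~\ref{p:compact-group-kFU}, but either suffices.
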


\begin{proof}
(i)$\Ra$(ii) is clear.
\smallskip

(ii)$\Ra$(iii) Assume that $G^+$ is weakly open-compact attainable. By the structural Theorem 24.30 of \cite{HR1}, $G$ is topologically isomorphic to  $\IR^n\times H$, where $n\in\w$ and $H$ has  an open compact subgroup $K$. Denote by $S$ the discrete group $G/(\IR^n\times K)$.  By Problem 9.9.N of \cite{ArT}, we have $S^+=G^+/(\IR^n\times K)^+$. Hence, by Corollary \ref{c:quotient-woca-group}, the precompact group $S^+$ is weakly open-compact attainable. By Theorem 9.9.30 of \cite{ArT}, $S^+$ has no infinite compact sets. The last two facts imply that $S$ must be finite. Therefore $H$ is a compact group.
\smallskip

(iii)$\Ra$(i) Let $G$ be topologically isomorphic to  $\IR^n\times H$, where $n\in\w$ and $H$ is a compact group. Then, by Problem 9.9.N of \cite{ArT}, $G^+=(\IR^n)^+\times H$. By Corollary  \ref{c:lcg-kFU},  $H$ is a $\kappa$-sequential space. It is clear that $(\IR^n)^+$ is pass-connected, and hence, by Proposition \ref{p:pass-connected-k-sequential},  $(\IR^n)^+$ is $\kappa$-sequential. Thus, by Theorem  \ref{t:prod-kappa-connected}, $G^+$ is a $\kappa$-sequential space.\qed
\end{proof}

\bibliographystyle{amsplain}

\begin{thebibliography}{10}



\bibitem{Arhangel63}
A.V. Arhangel'skii, Bicompact sets and topology of spaces, Dokl. Akad. Nauk SSSR [Russian] \textbf{150} (1963):1, 9--12.

\bibitem{Arhan63}
A.V. Arhangel'skii, Some types of factor mappings and the relations between classes of topological spaces, Dokl. Akad. Nauk SSSR [Russian] \textbf{153} (1963):4, 743--746.


\bibitem{Arhangel}
A.V. Arhangel'skii, \emph{Topological function spaces}, Math. Appl. \textbf{78}, Kluwer Academic Publishers, Dordrecht, 1992.

\bibitem{ArT}
A.V.~Arhangel'skii, M.G.~Tkachenko, \emph{Topological groups and related strutures}, Atlantis Press/World Scientific, Amsterdam-Raris, 2008.

\bibitem{Bagley-Weddington-69}
R.W. Bagley, D.D. Weddington, Products of $k'$-spaces, Proc. Amer. Math. Soc.  \textbf{22}:2 (1969), 392--394.

\bibitem{BG}
T. Banakh, S. Gabriyelyan,  On the $\CC$-stable closure of the class of (separable) metrizable spaces, Monatshefte Math.   \textbf{180} (2016), 39--64.

\bibitem{Eng}
R.~Engelking, General Topology, Heldermann Verlag, Berlin, 1989.

\bibitem{Franklin-Smith}
S.P. Franklin, B.V. Smith Thomas, A survey on $k_\w$-spaces, Topology Proc. \textbf{2} (1977), 111--124.

\bibitem{Gabr}
S. Gabriyelyan, Free locally convex spaces and the  $k$-space property, Canadian Math. Bull. \textbf{57} (2014), 803--809.


\bibitem{Gabr-B1}
S. Gabriyelyan, Topological properties of spaces of Baire functions, J. Math. Anal. Appl. \textbf{478} (2019), 1085--1099.


\bibitem{Gabr:weak-bar-L(X)}
S.~Gabriyelyan, Locally convex properties of free locally convex spaces, J. Math. Anal. Appl. \textbf{480} (2019), 123453.

\bibitem{Gabr-seq-Ascoli}
S. Gabriyelyan, Ascoli and sequentially Ascoli spaces, Topology Appl. \textbf{285} (2020), No 107401, 28 pp.

\bibitem{Gabr-pseudo-Ascoli}
S. Gabriyelyan, Ascoli's theorem for pseudocompact spaces, Revista de la Real Academia de Ciencias Exactas, Fisicas y Naturales. Serie A. Matematicas RACSAM \textbf{114} (2020), No 174, 10 pp.



\bibitem{Gab-Pel}
S. Gabriyelyan, Pe{\l}czy\'{n}ski-type sets and Pe{\l}czy\'{n}ski's geometrical properties of locally convex spaces, Dissertationes Math., \textbf{605} (2025), 133 pp.



\bibitem{GGKZ}
S.~Gabriyelyan, J.  Greb\'{\i}k, J. K\c{a}kol, L. Zdomskyy,  The Ascoli property for function spaces, Topology Appl. \textbf{214} (2016), 35--50.


\bibitem{GKP}
S.~Gabriyelyan, J.~K{\c{a}}kol,  G. Plebanek, The Ascoli property for function spaces and the weak topology of Banach and Fr\'echet spaces, Studia Math. \textbf{233} (2016), 119--139.

\bibitem{GR-kR-sR}
S. Gabriyelyan, E. Reznichenko, On $k_\IR$-spaces and  $s_\IR$-spaces, Topol. Appl. \textbf{373} (2025), No 109528, 24 pp.



\bibitem{GR-prod}
S. Gabriyelyan, E. Reznichenko, Functions on products $X \times Y$ with applications to  Ascoli spaces, $k_\IR$-spaces and $s_\IR$-spaces, European J. Math. \textbf{11} (2025), No 72, 18pp.


\bibitem{GT-1982}
G. Gruenhage, Y. Tanaka, Products of $k$-spaces and spaces of countable tightness, Proc. Amer. Math. Soc. \textbf{273}, 299--308.

\bibitem{HR1}
E.~Hewitt, K.A.~Ross,  \emph{Abstract Harmonic Analysis}, Vol. I, 2nd ed. Springer-Verlag, Berlin, 1979.


\bibitem{Jar}
H.~Jarchow, \emph{Locally Convex Spaces}, B.G. Teubner, Stuttgart, 1981.

\bibitem{JW-1997}
W. Just, M. Weese, {\em Discovering Modern Set Theory II}, Graduate Studies in Mathematics, American Math. Soc.  \textbf{18}, 1997.

\bibitem{LiL}
C.~Liu, L.D.~Ludwig, $\kappa$-Fr\'{e}chet--Urysohn spaces, Houston J. Math. \textbf{31} (2005), 391--401.

\bibitem{mcoy}
R.A. McCoy, I.Ntantu, \emph{Topological Properties of Spaces of Continuous Functions}, Lecture Notes in Math.  \textbf{1315}, 1988.

\bibitem{Michael-68}
E. Michael, Local compactness and cartesian products of quotient maps and $k$-spaces, Ann. Inst. Fourier, Grenoble \textbf{18}:2 (1968), 281--286.

\bibitem{Michael1972}
E. Michael, A quintuple quotient quest, General Topology and Appl., \textbf{2} (1972), 91--138.

\bibitem{Mi73}
E. Michael, On $k$-spaces, $k_R$-spaces and $k(X)$, Pacific J. Math. \textbf{47}  (1973), 487--498.

\bibitem{Mrowka}
S. Mr\'{o}wka, Mazur theorem and $m$-adic spaces, Bull. Acad. Polon. Sci. S\'{e}r. Sci. Math. Astronom. Phys. 18 (1970), 299--305.

\bibitem{Noble}
N. Noble, Ascoli theorems and the exponential map, Trans. Amer. Math. Soc. \textbf{143}  (1969), 393--411.

\bibitem{Nob}
N. Noble,  The continuity of functions on Cartesian products, Trans. Amer. Math. Soc. \textbf{149}  (1970), 187--198.


\bibitem{nyikos}
P.J. Nyikos, Metrizability and Fr\'{e}chet-Urysohn property in topological groups, Proc. Amer. Math. Soc. \textbf{83} (1981), 793--801.



\bibitem{Sak2}
M.~Sakai, Two properties of $C_p(X)$ weaker than Fr\'{e}chet--Urysohn property, Topology Appl. \textbf{153} (2006), 2795--2804.

\bibitem{Sakai-3}
M.~Sakai, $\kappa$-Fr\'{e}chet-Urysohn property of $\CC(X)$, Topology Appl. \textbf{154} (2007), 1516--1520.

\bibitem{Shchepin}
E. Shchepin, On $\kappa$-metrizable spaces, Math. USSR, Izv. \textbf{14}:2 (1980), 1--34.

\bibitem{S-W}
G. Schl\"{u}chtermann, R. F. Wheeler, The Mackey dual of a Banach space, Noti de Matematica, \textbf{XI} (1991), 273--287.

\bibitem{Simon}
P. Simon, A compact Fr\'{e}chet space whose square is not Fr\'{e}chet, Comment. Math. Univ. Carolin. \textbf{21}:4 (1980), 749--753.

\end{thebibliography}

\end{document}